\newcommand{\G}{\Gamma}
\newcommand{\Z}{\mathbb{Z}}
\newcommand{\R}{\mathbb{R}}
\newcommand{\N}{\mathbb{N}}
\newcommand{\ff}{\mathbb{F}}
\newcommand{\lcm}{\operatorname{lcm}}
\newcommand{\sk}{\smallskip}
\newcommand{\msk}{\medskip}
\newtheorem{thm}{Theorem}[section]
\newtheorem{prop}[thm]{Proposition}
\newtheorem{lem}[thm]{Lemma}
\newtheorem{coro}[thm]{Corollary}
\theoremstyle{definition}
\newtheorem{rem}[thm]{Remark}
\newtheorem{exam}[thm]{Example}
\newtheorem{defi}[thm]{Definition}
\theoremstyle{remark}
\begin{document}
\numberwithin{equation}{section}
\title[Integral equienergetic non-isospectral unitary Cayley graphs]{Integral equienergetic non-isospectral \\ unitary Cayley graphs}
\author[R.A.\@ Podest\'a, D.E.\@ Videla]{Ricardo A.\@ Podest\'a, Denis E.\@ Videla}
\dedicatory{\today}
\keywords{Equienergetic, non-isospectral, unitary Cayley graphs, Ramanujan}
\thanks{2010 {\it Mathematics Subject Classification.} Primary 05C25;\, Secondary 05C50, 05C75.}
\thanks{Partially supported by CONICET and SECyT-UNC}

\address{Ricardo A.\@ Podest\'a, FaMAF -- CIEM (CONICET), Universidad Nacional de C\'ordoba, \newline
	Av.\@ Medina Allende 2144, Ciudad Universitaria, (5000) C\'ordoba, Argentina. 
	\newline {\it E-mail: podesta@famaf.unc.edu.ar}}
\address{Denis E.\@ Videla, FaMAF -- CIEM (CONICET), Universidad Nacional de C\'ordoba, \newline
	Av.\@ Medina Allende 2144, Ciudad Universitaria,  (5000) C\'ordoba, Argentina. 
	\newline {\it E-mail: devidela@famaf.unc.edu.ar}}

\begin{abstract}
We prove that the Cayley graphs $X(G,S)$ and $X^+(G,S)$ are equienergetic for any abelian group $G$ and 
any symmetric subset $S$. We then focus on the family of 
unitary Cayley graphs $G_R=X(R,R^*)$, where $R$ is a finite commutative ring with identity.
We show that under mild conditions, $\{G_R, G_R^+\}$ are pairs of integral equienergetic non-isospectral graphs (generically connected and non-bipartite). Then, we obtain conditions such that 
$\{G_R, \bar G_R\}$ are equienergetic non-isospectral graphs.
Finally, we characterize all integral equienergetic non-isospectral triples $\{G_R, G_R^+, \bar G_R \}$ 
such that all the graphs are also Ramanujan. 
\end{abstract}

\maketitle

\section{Introduction}
This paper deals with the spectrum and the energy of Cayley graphs and Cayley sum graphs. 
Our main goal is to give a general construction of infinite pairs of integral equienergetic non-isospectral regular graphs with some nice extra properties like being connected, non-bipartite or Ramanujan (or all of them). 
We will focus on the family of unitary Cayley (sum) graphs over finite rings. The graphs will be undirected, but one of the graphs of the pairs can be taken either with or without loops.  

If $\Gamma$ is a graph of $n$ vertices, the eigenvalues of $\Gamma$ are the eigenvalues $\{\lambda_i\}_{i=1}^n$ of its adjacency matrix. The \textit{spectrum} of $\Gamma$
is the set of all the different eigenvalues $\{\lambda_{i_j}\}$ of $\Gamma$ counted with their multiplicities $\{e_{i_j}\}$,
and it is usually denoted by
$$Spec(\Gamma) = \{ [\lambda_{i_1}]^{e_1}, \ldots,[\lambda_{i_s}]^{e_{i_s}}\}$$ 
where $\lambda_{i_1} >\cdots > \lambda_{i_s}$. The spectrum is \textit{symmetric} if for every eigenvalue $\lambda$, its opposite $-\lambda$ is also an eigenvalue with the same multiplicity as $\lambda$. 
The graph $\Gamma$ is called \textit{integral} if $Spec(\G) \subset \Z$, i.e.\@ if all of its eigenvalues are integers. 
The \textit{energy} of $\Gamma$ is defined by 
$$E(\Gamma) = \textstyle \sum_{i=1}^n |\lambda_i| = \sum_{j=1}^s e_{i_j} |\lambda_{i_j}|.$$
We refer to the books \cite{BH} or \cite{CDS} for a complete viewpoint of spectral theory of graphs, and to \cite{Gu} for a survey on the energy of graphs.

\subsubsection*{Equienergetic non-isospectral graphs}
Let $\Gamma_1$ and $\Gamma_2$ be two graphs with the same number of vertices. The graphs are \textit{isospectral} 
(or \textit{cospectral}) if $Spec(\Gamma_1) = Spec(\Gamma_2)$ and \textit{equienergetic} if $E(\Gamma_1) = E(\Gamma_2)$.
It is clear by the definitions that isospectrality implies equienergeticity, but the converse is false in general. 
Thus, we are interested in the construction of equienergetic pairs of graphs which are non-isospectral. 
The smallest such pair is given by the 4-cycle $C_4$ and two disjoint copies of $K_2$, $K_2 \otimes K_2$ of 4-vertices  
or, if one wants connected graphs, by the $5$-cycle $C_5$ and the $5$-wheel 
$W_5=C_4 +K_1$ (the join of $C_4$ with and edge) of 5-vertices. In fact, we have 
$Spec(C_4) = \{[2]^2, [0]^2\}$ and $Spec(K_2 \otimes K_2) = \{[1]^2, [-1]^2 \}$ and also $Spec(C_5)=\{[2]^1, [\tfrac{\sqrt{5} -1}{2}]^2, [-\tfrac{\sqrt{5} +1}{2}]^2\}$ and $Spec(W_5) = \{ [\sqrt{5}+1]^1, [0]^2, [-\sqrt{5}+1]^1, [-2]^1\}$. Hence, we get $E(C_4)= E(K_2 \otimes K_2)=4$ and $E(C_5)=E(W_5)=2(\sqrt 5 +1)$. 
Notice that the first pair is integral while the second not. So, a pair of integral connected equienergetic non-isospectral graphs (without loops) must have at least 6 vertices. 

Although there are many examples of pairs of equienergetic non-isospectral graphs in the literature, there are few systematic constructions. We are only aware of the following four:
\begin{enumerate}[($a$)]
	\item \textit{Kronecker products}. In 2004, Balakrishnan (\cite{Ba}) showed that the graphs $\G \otimes (K_2 \otimes K_2)$ and 
	$\G \otimes C_4$ are equienergetic and non-isospectral, where $\Gamma$ is a non-trivial graph and $\otimes$ denotes the Kronecker product. Here, the first graph is not connected.
	
	\sk
	
	\item \textit{Iterated line graphs}. In the same year, Ramane et al (\cite{Ra}, 2004) proved that for two $k$-regular graphs $\Gamma_1$ and $\Gamma_2$ with the same number of vertices and $k \ge 3$,
	the iterated line graphs $L^r(\Gamma_1)$ and $L^r(\Gamma_2)$ are equienergetic for every $r\ge 2$.  Thus, if $\Gamma_1$ and $\Gamma_2$ are connected and non-isospectral, then $L^r(\Gamma_1)$ and $L^r(\Gamma_2)$ are connected equienergetic non-isospectral graphs.
	
	\sk
	
	\item \textit{Gcd-graphs}. Five years later Ilic (\cite{Il}, 2009) obtained families of $k$ hyperenergetic equienergetic non-isospectral gcd-graphs for any $k\in \N$. Namely, given $n=p_1 \cdots p_k$ with $p_1, \ldots, p_k$ primes, take the graphs $X_n(1,p_1), X_n(p_1,p_2), \ldots, X_n(p_{k-1},p_k)$ 
 	where $X_n(p_{j-1},p_j)$ have vertex set $\Z_n$ and edge set $E_j=\{ \{a,b\}:(a-b, n) \in \{p_{j-1},p_j\} \}$, where $p_0=1$.  
	
	\sk
	
	\item \textit{Doubles}. More recently, Ganie, Pirzada and Iv\'anyi (\cite{GPI}, 2014) constructed several pairs of 
	equienergetic non-isospectral graphs using the bipartite double $G^*$, the double $D[G]$ of $G$, and the iterated doubles $G^{k*}$, $D^k[G]$ of them. In particular, they proved that if $G$ is bipartite, $\{ G^*, D[G], G\otimes K_2 \}$ is a triple of equienergetic non-isospectral graphs. This suggests that we may restrict to the search of equienergetic non-isospectral pairs to non-bipartite graphs.
\end{enumerate}
Some of these methods can be combined to form new pairs of equienergetic non-isospectral graphs. For instance, in \cite{HX} it is proved that if $G_1$ and $G_2$ are $k$-regular graphs of the same order then $\{ L^2(G_1)^*, L^2(G_2)^* \}$, 
$\{ ({\overline{L^2(G_1)}})^*, ({\overline{L^2(G_2)}})^* \}$ and $\{ \overline{(L^2(G_1))^*}, \overline{(L^2(G_2))^*} \}$ are all pairs of bipartite equienergetic non-isospectral graphs.

Integral graphs were first considered by Harary and Schwenk in 1973 (\cite{HS}) when they posed the question 
\textit{``Which graphs have integral spectra?''} The problem seems to be very hard in great generality.
A survey of integral graphs from 2002 is \cite{BCRSS} focusing on trees, cubic graphs, 4-regular graphs and graphs of small size.
In \cite{AABS} the authors proved that only a small fraction of graphs of $n$-vertices are integral. More precisely, if $I(n)$ denotes the number of integral graphs of $n$-vertices then 
$I(n) \le 2^{\frac{n(n-1)}{2}-\frac{n}{400}}$,
where $2^{\frac{n(n-1)}{2}}$ is the number of graphs of $n$-vertices. Also, integral graphs may be of interest in the design of the network topology of perfect state transfer networks (see \cite{AABS} and references therein).

In this paper we present a new general construction to produce infinite pairs or triples of integral equienergetic non-isospectral regular graphs (typically connected and non-bipartite), using Cayley graphs, their complements and Cayley sum graphs, that we introduce next. 
After Abdollahi and Vatandoost asked \textit{``Which Cayley graphs are integral?''} in \cite{AV}, these graphs were studied further by Klotz and Sander (\cite{KS}, \cite{KS2}) and by Alperin and Peterson (\cite{AP}).

\subsubsection*{Cayley graphs}
Let $G$ be a finite abelian group and $S$ a subset of $G$ with $0\notin S$. The \textit{Cayley graph }$X(G,S)$ is the directed graph whose vertex set is $G$ and $v, w \in G$ form a directed edge of $\Gamma$ from $v$ to $w$ if $w-v \in S$. Since $0\notin S$ then $\Gamma$ has no loops. Analogously, the \textit{Cayley sum graph} $X^+(G,S)$ has the same vertex set $G$ but now $v,w\in G$ are connected in $\Gamma$ 
by an arrow from $v$ to $w$ if $v+w \in S$. Notice that if $S$ is symmetric,
that is $-S=S$, then 
$X(G,S)$ and $X^+(G,S)$ are $|S|$-regular undirected graphs. 
However, $X^+(G,S)$ may contain loops. In this case, there is a loop on vertex $x$ provided that  $x+x \in S$.
For an excellent survey of spectral properties of general Cayley graphs we refer the reader to \cite{LZ2}. 
By allowing loops, we will get several interesting new results. In particular, the smallest pair of integral equienergetic non-isospectral connected graphs is $\{ C_3, \hat P_3\}$, the 3-cycle and the 3-path with loops at the ends, with only 3 vertices each. 

One important special case of these graphs is obtained when $G$ is a finite ring with identity $R$ and $S$ is its group of units $R^*$. That is 
\begin{equation*} \label{GR} 
G_R = X(R,R^*) \qquad \text{and} \qquad 
G_R^+ = X^+(R,R^*),
\end{equation*} 
called the \textit{unitary Cayley graphs} and the \textit{unitary Cayley sum graphs}, respectively (the graphs $G_R^+$ are also known as \textit{closed unit graphs} and \textit{unit graphs}, if one does not allow loops).  
Unitary Cayley graphs were studied for instance in \cite{Ak+}, \cite{Il}, \cite{Ki+} and \cite{LZ}, and the unitary Cayley sum graphs were studied in the works of Maimani, Pournaki et al (see for instance \cite{AMP}, \cite{DMP}, \cite{MPY}) and recently in \cite{RAR}.
In this paper we will work with the graphs $G_R$ and $G_R^+$ as well as with the complements $\bar G_R$ of $G_R$.

\subsubsection*{Outline and results}
We now give the structure and summarize the main results of the paper.
In Section~2, we study the spectra of the graphs $X(G,S)$ and $X^+(G,S)$ for any abelian group $G$. 
In Proposition~\ref{Multiplicity} we compute the multiplicities of the eigenvalues 
of $X^+(G,S)$. In Theorem~\ref{equienergetic}, we show that the graphs $X(G,S)$ and $X^+(G,S)$ are equienergetic provided that $S$ is symmetric.
These graphs are generically non-isospectral. Under certain conditions on the characters of $G$, the spectrum of $X(G,S)$ determines that of $X^+(G,S)$ and the graphs $X(G,S)$ and $X^+(G,S)$ are in fact non-isospectral (see Proposition \ref{coromult}).

In the next section we consider unitary Cayley graph over rings $G_R$ and $G_R^+$,  
with $R$ a finite commutative ring with identity $1\ne 0$ and $R^*$ the group of units of $R$. 
The spectrum of $G_R$ is known (see \cite{Ki+}). By using this, we compute the spectrum of $G_R^+$ and show that $G_R$ and $G_R^+$ are integral equienergetic non-isospectral connected non-bipartite graphs, under certain conditions. In the case that $R$ is a local ring we only need that $|R|$ is odd (Proposition \ref{Spec GR+}). 
The general case is treated in Theorem \ref{XRR* equinoiso}, where we require that $|R|$ is of what we called odd-type (see Definition \ref{defi odd}).
The graph $G_R^+$ can be taken either with or without loops. As an application, in Proposition \ref{srg GR} we characterize all graphs $G_R$ and $G_R^+$ which are strongly regular graphs.

In Section 4 we consider the complementary graphs 
$\bar G_R = X(R,(R^*)^c \smallsetminus \{0\})$ of $G_R$. Both $G_R$ and $\bar G_R$ are loopless.
Using the known expressions for the energies of $G_R$ and $\bar G_R$ we obtain a general arithmetic condition on $R$ for $G_R$ to be equienergetic with its complement $\bar G_R$ (see \eqref{equien G bar G}). In Theorem~\ref{s12} we obtain explicit conditions when $R$ is local, a product of two local rings or a product of three finite fields, such that $E(G_R)=E(\bar G_R)$. As a consequence, in Corollary~\ref{teo crowns} we get that if $m$ is a prime power, then the complete $m$-multipartite graph $K_{m\times m}$ of $m^2$ vertices and the crown graph $H_{m,m}$ of $2m$ vertices are equienergetic and non-isospectral with their corresponding complements. In Corollary~\ref{ternas GG+Gbar} we produce infinitely many triples $\{G_R,G_R^+,\bar G_R\}$ of equienergetic non-isospectral graphs.

In Section 5, we deal with the construction of equienergetic non-isospectral pairs of graphs such that at least one of them is Ramanujan. 
We will use the known characterization of Ramanujan unitary Cayley graphs $G_R$ due to Liu and Zhou (\cite{LZ}, 2012). 
characterize all the pairs $\{G_R, G_R^+\}$ and $\{G_R,\bar G_R\}$ which are equienergetic non-isospectral, where at least one of the graphs is Ramanujan, distinguishing the cases when $R$ is a local ring or not (see Theorems \ref{teo ram gr} and \ref{teo ram gr nonlocal}). Then, we characterize all possible triples 
$\{G_R,\bar G_R, G_R^+ \}$ of equienergetic non-isospectral Ramanujan graphs (see Corollary \ref{coro gr ram} for $R$ local, Proposition \ref{prop tripla} for $R$ non-local and Corollary \ref{coro ternas} for $R=\Z_n$).

Finally, in the last section, by combining previous results we produce bigger sets of integral equienergetic non-isospectral graphs. For instance, in Example 6.6, we give a set of 23 integral equienergetic non-isospectral connected graphs.

\section{Equienergy of $X(G,S)$ and $X^+(G,S)$}
Here we compute the spectrum of $X^+(G,S)$ from that of $X(G,S)$ and show that $X(G,S)$ and $X^+(G,S)$ are equienergetic for $G$ abelian and $S$ symmetric, and that under certain conditions they are also non-isospectral graphs. 

Let $G$ a finite abelian group and $S$ a subset of $G$ not containing $0$. 
It is well-known that the spectra of $X(G,S)$ and $X^+(G,S)$ can be computed by using the irreducible 
characters $\widehat G$ of $G$. 
Given $\chi \in \widehat G$, i.e.\@ a group homomorphism $\chi: G \rightarrow \mathbb{S}^1 \subset \mathbb{C}^*$, one can define 
\begin{equation} \label{echis}
e_\chi = \chi(S) = \sum_{s\in S} \chi(s) \qquad \text{and} \qquad v_{\chi} = \big( \chi(g) \big)_{g\in G}.
\end{equation}
Note that $e_\chi \in \mathbb{C}$ and $v_\chi \in (\mathbb{S}^1)^n$ if $|G|=n$. 
We have the following well-known result.
\begin{lem} \label{X e Y}
In the previous notations:
\begin{enumerate}[$(a)$]
\item The eigenvalues of $X(G,S)$ are $\{ e_\chi \}_{\chi \in \widehat G}$ and $e_\chi$ has associated eigenvector $v_{\chi}$.

\item The eigenvalues of $X^+(G,S)$ are either $e_\chi= 0$ or $\{ \pm |e_\chi|\}_{\chi \in \widehat G} \subset \R$, such that:
	\begin{enumerate}[$(i)$]
			\item If $e_\chi =0$, their corresponding eigenvectors are $v_{\chi}$ and $v_{\chi^{-1}}$.
			
			\item If $e_\chi \ne 0$, the eigenvector associated to $\pm|e_\chi|$ is 
			$|e_\chi| v_{\chi} \pm e_\chi v_{\chi^{-1}}$.  
	\end{enumerate}
 \end{enumerate}
\end{lem}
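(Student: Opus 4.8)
The plan is to work directly with the adjacency matrices. Let $A$ and $B$ denote the adjacency matrices of $X(G,S)$ and $X^+(G,S)$, both indexed by $G$, so that $A_{v,w}=1$ exactly when $w-v\in S$ and $B_{v,w}=1$ exactly when $v+w\in S$, and I would test the vectors $v_\chi$ as candidate eigenvectors. For part $(a)$ a one-line computation suffices: substituting $s=w-v$ gives
\[
(Av_\chi)_v=\sum_{w-v\in S}\chi(w)=\sum_{s\in S}\chi(v+s)=\chi(v)\sum_{s\in S}\chi(s)=e_\chi\,\chi(v),
\]
since $\chi$ is a homomorphism, whence $Av_\chi=e_\chi v_\chi$. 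As $|\widehat G|=|G|=n$ and the $v_\chi$ are pairwise orthogonal by the character orthogonality relations, they form a basis of $\C^n$ and hence exhaust the spectrum of $A$.

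For part $(b)$ the conceptual point — and the reason the sum graph behaves differently — is that $B$ does not preserve the line $\C v_\chi$ but instead interchanges it with $\C v_{\chi^{-1}}$. Indeed, substituting $s=v+w$,
\[
(Bv_\chi)_v=\sum_{v+w\in S}\chi(w)=\sum_{s\in S}\chi(s-v)=\chi(-v)\sum_{s\in S}\chi(s)=e_\chi\,\chi^{-1}(v),
\]
so that $Bv_\chi=e_\chi v_{\chi^{-1}}$; by symmetry $Bv_{\chi^{-1}}=e_{\chi^{-1}}v_\chi=\overline{e_\chi}\,v_\chi$, using $e_{\chi^{-1}}=\overline{e_\chi}$. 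Therefore, for each pair $\{\chi,\chi^{-1}\}$ with $\chi\ne\chi^{-1}$, the span of $v_\chi$ and $v_{\chi^{-1}}$ is a $2$-dimensional $B$-invariant subspace on which $B$ acts by the Hermitian matrix $\left(\begin{smallmatrix}0 & \overline{e_\chi}\\ e_\chi & 0\end{smallmatrix}\right)$.

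Diagonalizing this block is then routine: its characteristic polynomial is $\lambda^2-|e_\chi|^2$, giving eigenvalues $\pm|e_\chi|$, and solving the two linear systems yields the eigenvectors $|e_\chi|v_\chi\pm e_\chi v_{\chi^{-1}}$, which is exactly case $(ii)$; when $e_\chi=0$ the block is the zero matrix and both $v_\chi,v_{\chi^{-1}}$ lie in its kernel, which is case $(i)$. Summing these contributions over all such pairs accounts for the non-self-dual part of the spectrum.

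The only delicate point — and the sole obstacle — is the self-dual case $\chi=\chi^{-1}$ (the real-valued characters), where $v_\chi=v_{\chi^{-1}}$ and $e_\chi=\overline{e_\chi}\in\R$. Here the relation $Bv_\chi=e_\chi v_\chi$ shows that $v_\chi$ is genuinely an eigenvector with real eigenvalue $e_\chi=\pm|e_\chi|$ according to its sign, and one checks that the formulas in $(i)$–$(ii)$ degenerate consistently: the vector $|e_\chi|v_\chi\pm e_\chi v_\chi$ collapses to a nonzero multiple of $v_\chi$ for the matching sign and vanishes for the other. I would finish by confirming that these one-dimensional pieces, together with the two-dimensional blocks, give total multiplicity $n$, so that the eigenvalue–eigenvector list in $(b)$ is complete.
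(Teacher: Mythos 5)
Your proof is correct and is exactly the standard argument: the paper states this lemma as well-known and omits the proof, but the block decomposition you use (pairing $v_\chi$ with $v_{\chi^{-1}}$, diagonalizing the Hermitian $2\times 2$ block $\left(\begin{smallmatrix}0 & \overline{e_\chi}\\ e_\chi & 0\end{smallmatrix}\right)$, and checking the degenerate self-dual case) is precisely the structure the paper relies on later, e.g.\ in the proof of Proposition~\ref{Multiplicity}, where the eigenvectors $V_\chi^\pm=|e_\chi|v_\chi\pm e_\chi v_{\chi^{-1}}$ reappear. All computations check out, including the dimension count and the collapse of $|e_\chi|v_\chi\pm e_\chi v_\chi$ to $2e_\chi v_\chi$ or $0$ for real characters.
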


Notice that $X(G,S)$ is integral if and only if $X^+(G,S)$ is integral. Also,
$|S|$ is always an eigenvalue of both $X(G,S)$ and $X^+(G,S)$. In fact, denoting by $\chi_0$ the principal character of $G$ (i.e.\@ $\chi_0(g)=1$ for every $g\in G$) then we have $e_{\chi_0}=|S|$. 
Moreover, $|S|$ is the principal eigenvalue of both graphs, i.e.\@ $\lambda_0=\lambda_0^+=|S|$ since $X(G,S)$ and $X^+(G,S)$ are $|S|$-regular. 
In general, if $\G$ is $k$-regular, then $k$ is an eigenvalue of $\G$ and $k \ge |\lambda|$ for an $\lambda \in Spec(\G)$, hence $k$ is called the \textit{trivial} or \textit{principal} eigenvalue of $\G$.

The lemma shows that $X(G,S)$ and $X^+(G,S)$ are generically non-isospectral. 
However, in some special cases these graphs could be isospectral or even the same graph. For instance, if $G=\ff_{2^m}$ or $G=\Z_{2^m}$ then $X(G,S)=X^+(G,S)$ for any subset $S$ of $G\smallsetminus \{0\}$  (see Lemma \ref{lema GRGR+}).

\subsubsection*{Equienergeticity}
Here we give simple conditions on $G$ and $S$ for $X(G,S)$ and $X^+(G,S)$ to be equienergetic.
We will need the following definition in the sequel.
\begin{defi}
	If $G$ is an abelian group and $S$ is a symmetric subset of $G$ with $0 \not \in S$, then we say that $(G,S)$ is an \textit{abelian symmetric pair}. 
\end{defi}

We begin by showing that there is a simple relation between the eigenvalue $e_\chi$ associated to a character $\chi$ of $G$ as in \eqref{echis} and the corresponding one associated to its inverse $\chi^{-1}$.
\begin{lem} \label{Ceros}
Let $S$ be a subset of a finite abelian group $G$ such that $0 \notin S$.   
Then $e_{\chi^{-1}} = \overline{e_\chi}$, and hence $e_\chi=0$ if and only if $e_{\chi^{-1}}=0$, for all $\chi \in \widehat{G}$.
Moreover, if $S$ is symmetric, then $e_\chi \in \R$ and $e_{\chi^{-1}} = e_\chi$ for all $\chi \in \widehat{G}$. 
\end{lem}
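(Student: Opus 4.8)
The plan is to prove the statement about $e_{\chi^{-1}} = \overline{e_\chi}$ directly from the definition in \eqref{echis}, exploiting the fact that characters of a finite abelian group take values in the unit circle $\mathbb{S}^1$, so that $\overline{\chi(s)} = \chi(s)^{-1} = \chi^{-1}(s)$ for every $s \in G$.

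First I would compute $\overline{e_\chi}$ by conjugating the defining sum. Since $e_\chi = \sum_{s \in S} \chi(s)$ and conjugation is additive, we get $\overline{e_\chi} = \sum_{s \in S} \overline{\chi(s)}$. Because each $\chi(s)$ lies in $\mathbb{S}^1$, we have $\overline{\chi(s)} = \chi(s)^{-1} = \chi^{-1}(s)$, and therefore $\overline{e_\chi} = \sum_{s\in S} \chi^{-1}(s) = e_{\chi^{-1}}$. This is the first assertion. The equivalence $e_\chi = 0 \iff e_{\chi^{-1}} = 0$ then follows immediately, since a complex number vanishes precisely when its conjugate does.

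For the second part, assume $S$ is symmetric, i.e. $-S = S$. The key observation is that the inversion map $s \mapsto -s$ is a bijection of $S$ onto itself, so I can reindex the sum defining $e_{\chi^{-1}}$. Using $\chi^{-1}(s) = \chi(-s)$ (which holds since $\chi$ is a homomorphism into an abelian group) and then substituting $t = -s$, I obtain
\[
e_{\chi^{-1}} = \sum_{s \in S} \chi(-s) = \sum_{t \in -S} \chi(t) = \sum_{t \in S} \chi(t) = e_\chi,
\]
the penultimate equality being exactly where the symmetry $-S = S$ is used. Combining this with the first part gives $e_\chi = e_{\chi^{-1}} = \overline{e_\chi}$, which forces $e_\chi \in \R$.

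I do not anticipate a serious obstacle here, as the whole argument rests on two elementary facts: conjugation on $\mathbb{S}^1$ coincides with inversion, and symmetry of $S$ lets one reindex the character sum. The only point requiring mild care is to be explicit that $\chi^{-1}(s) = \chi(s)^{-1} = \overline{\chi(s)} = \chi(-s)$, so that the two identities $e_{\chi^{-1}} = \overline{e_\chi}$ and (under symmetry) $e_{\chi^{-1}} = e_\chi$ are both derived cleanly from the definition rather than conflated.
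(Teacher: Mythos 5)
Your proof is correct. The first assertion is argued exactly as in the paper: conjugate the sum term by term and use $\overline{\chi(s)}=\chi(s)^{-1}=\chi^{-1}(s)$. For the second assertion, however, you take a genuinely different (and arguably cleaner) route. The paper deduces $e_\chi\in\R$ from the fact that, $S$ being symmetric, the adjacency matrix of $X(G,S)$ is a real symmetric matrix, so its eigenvalues --- which by Lemma \ref{X e Y} are precisely the $e_\chi$ --- must be real; the identity $e_{\chi^{-1}}=e_\chi$ then follows from $e_{\chi^{-1}}=\overline{e_\chi}$. You instead prove $e_{\chi^{-1}}=e_\chi$ first, by the purely combinatorial reindexing $\sum_{s\in S}\chi(-s)=\sum_{t\in -S}\chi(t)=\sum_{t\in S}\chi(t)$ using $-S=S$, and only then conclude $e_\chi=\overline{e_\chi}\in\R$. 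Your argument is entirely self-contained at the level of character sums and does not rely on Lemma \ref{X e Y} or on the spectral theorem for real symmetric matrices; the paper's version is shorter but imports that spectral input. Both are valid, and the logical order of the two conclusions ($e_\chi\in\R$ versus $e_{\chi^{-1}}=e_\chi$) is simply reversed between the two proofs.
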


\begin{proof}
If $\chi\in \widehat{G}$, we have that
$$e_{\chi^{-1}} = \chi^{-1}(S) = \sum_{g\in S} \chi^{-1}(g) = \sum_{g \in S} \overline{\chi(g)} = \overline{\sum_{g \in S} \chi(g)} = \overline{\chi(S)} = \overline{e_\chi}.$$
and thus $e_\chi=0$ if and only if $e_{\chi^{-1}}=0$. 
Now, if $S$ is a symmetric set, then the adjacency matrix of $X(G,S)$ is symmetric. Thus, $e_\chi\in \mathbb{R}$ and hence 
$e_{\chi^{-1}} = \overline{e_\chi} = e_\chi$. 
\end{proof}

\begin{rem} \label{real chars}
The condition $\chi=\chi^{-1}$ is equivalent to $\chi$ being a real character of $G$, 
i.e.\@ $\chi(g) \in \mathbb{R}$ for all $g\in G$. For finite abelian groups, a real character  only takes values in 
$\mathbb{S}^1 \cap \R = \{\pm 1\}$. Also, if $|G|$ is odd then $\chi_0$ is the only real irreducible character of $G$. In fact, suppose $\chi$ is a nontrivial real character of $G$ 
and $\chi(g_0)=-1$ for some $g_0\in G$, then $1=\chi(e)=\chi(g_{0}^{|G|})=(-1)^{|G|}=-1$.
\end{rem}

An equivalence relation $\sim$ between irreducible characters of $G$ is given by
$\chi \sim \chi'$ if and only if $e_\chi = e_{\chi'}$
where $\chi, \chi' \in \widehat G$. We denote by $\widehat G /\!\!\sim$ the set of equivalence classes
\begin{equation} \label{clase ex}
[\chi] = \{\chi' \in \widehat{G}: \, \chi' \sim \chi \} = \{\chi' \in \widehat{G}: \, e_{\chi'} = e_{\chi}\}.
\end{equation}
We will also need to consider the following set of characters
\begin{equation} \label{clase -ex}
\widetilde{[\chi]} = \{\chi' \in \widehat{G}: \, e_{\chi'} = -e_{\chi}\}
\end{equation}
and the associated subsets 
\begin{align} \label{notations}
\begin{split}
& [\chi]_\R  = \{\rho \in [\chi] : \rho^{-1} = \rho \},  \qquad
[\chi]_{\R^c}  = \{\rho \in [\chi] :\rho^{-1} \ne \rho \}, \\[1mm]
& \widetilde{[\chi]}_\R = \{ \rho \in [\chi]_\R : e_\rho = -e_\chi \},   \quad
\widetilde{[\chi]}_{\R^c} = \{\rho \in [\chi]_{\R^c} : e_\rho = -e_\chi \}.  
\end{split}
\end{align}
Hence, 
$[\chi] = [\chi]_\R \cup [\chi]_{\R^c}$ and $\widetilde{[\chi]} = \widetilde{[\chi]}_\R \cup \widetilde{[\chi]}_{\R^c}$.

We denote by $m(\lambda)$ (resp.\@ $m^{+}(\lambda)$) the multiplicity of 
the eigenvalue $\lambda$ in $X(G,S)$ (resp.\@ $X^+(G,S)$), with the convention that $m(\lambda)=0$ (resp.\@ $m^+(\lambda)=0$) if $\lambda$ is not an eigenvalue of $X(G,S)$ (resp.\@ $X^+(G,S)$).
By the independence of the characters, it is clear that if $e_\chi \ne 0$ then 
\begin{equation} \label{multi echi}
m(e_\chi) = \# [\chi] \qquad \text{and} \qquad m(-e_\chi) = \# \widetilde{[\chi]}.
\end{equation}
We now compute the multiplicities of the eigenvalues for general Cayley sum graphs.

\begin{prop} \label{Multiplicity}
Let $(G,S)$ be a finite abelian symmetric pair. 
If $\chi \in \widehat G$, then the eigenvalues 
 of $X^+(G,S)$ are $\pm e_\chi \in \R$ with multiplicities given by
\begin{equation} \label{m+}
\begin{split}
m^{+}(e_\chi) = \#[\chi]_\R + \tfrac 12 \# [\chi]_{\R^c} + \tfrac 12 \#  \widetilde{[\chi]}_{\R^c},\\ 
m^{+}(-e_\chi) =  \#  \widetilde{[\chi]}_\R +  \tfrac 12 \#  [\chi]_{\R^c} + \tfrac 12 \# \widetilde{[\chi]}_{\R^c} .
\end{split}
\end{equation}
Moreover, the relation with the multiplicities of $X(G,S)$ is given by $m^{+}(0)=m(0)$ 
and  
\begin{equation} \label{rel m and m+}
m(e_\chi) + m(-e_\chi) = m^{+}(e_\chi)+m^{+}(-e_\chi)
\end{equation}
for $e_\chi\neq 0$.
\end{prop}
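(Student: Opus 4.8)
The plan is to read off the action of the adjacency operator $A^+$ of $X^+(G,S)$ on the character vectors $v_\chi$ and then to bookkeep the eigenvalue contributions according to whether a character is real and whether its $e$-value equals $e_\chi$ or $-e_\chi$. The starting point is the identity $A^+ v_\chi = e_\chi\, v_{\chi^{-1}}$, which follows from the one-line computation $\sum_{x+y\in S}\chi(y)=\chi(-x)\sum_{s\in S}\chi(s)$ together with $e_{\chi^{-1}}=e_\chi$ from Lemma~\ref{Ceros}; this is exactly the information encoded in Lemma~\ref{X e Y}$(b)$, so I would simply invoke it.

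Next I would split $\widehat G$ into the real characters ($\rho=\rho^{-1}$) and the non-real ones, the latter organised into conjugate pairs $\{\rho,\rho^{-1}\}$. For a real $\rho$ one has $v_\rho=v_{\rho^{-1}}$ and hence $A^+v_\rho=e_\rho v_\rho$, so $v_\rho$ is a genuine eigenvector with eigenvalue \emph{exactly} $e_\rho$ (here one of the two vectors $|e_\rho|v_\rho\pm e_\rho v_{\rho^{-1}}$ of Lemma~\ref{X e Y} degenerates to $0$, which is the point that must be handled with care). For a non-real pair the operator $A^+$ restricts to $\bigl(\begin{smallmatrix}0 & e_\rho\\ e_\rho & 0\end{smallmatrix}\bigr)$ on $\mathrm{span}\{v_\rho,v_{\rho^{-1}}\}$, whose eigenvectors $v_\rho\pm v_{\rho^{-1}}$ have eigenvalues $\pm e_\rho$. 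Thus each conjugate pair contributes one eigenvector to $+e_\rho$ and one to $-e_\rho$, regardless of the sign of $e_\rho$.

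The core of the argument is then the count for a fixed eigenvalue, say $e_\chi\neq 0$. By Lemma~\ref{Ceros} inversion preserves $e$-values, so the non-real sets $[\chi]_{\R^c}$ and $\widetilde{[\chi]}_{\R^c}$ are unions of conjugate pairs and their cardinalities are even, making the half-counts integers. Real characters $\rho$ with $e_\rho=e_\chi$ contribute $\#[\chi]_\R$ eigenvectors to $e_\chi$; each of the $\tfrac12\#[\chi]_{\R^c}$ pairs inside $[\chi]_{\R^c}$ contributes its $+e_\rho=e_\chi$ vector; and each of the $\tfrac12\#\widetilde{[\chi]}_{\R^c}$ pairs inside $\widetilde{[\chi]}_{\R^c}$ contributes its $-e_\rho=e_\chi$ vector. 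Summing gives the first line of \eqref{m+}, and the symmetric count gives the second. The main (purely combinatorial) obstacle is exactly this bookkeeping: keeping straight that a non-real pair always splits as one $+e_\rho$ and one $-e_\rho$, while a real character delivers a single eigenvector to whichever of $\pm e_\chi$ equals its own $e$-value.

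Finally, for the \emph{moreover} part, when $e_\chi=0$ the identity $A^+v_\chi=0$ shows $v_\chi\in\ker A^+$, and since $\{\chi:e_\chi=0\}$ is closed under inversion (Lemma~\ref{Ceros}) one gets $m^+(0)=\#\{\chi:e_\chi=0\}=m(0)$. Relation~\eqref{rel m and m+} then falls out by adding the two displayed formulas in \eqref{m+}: the half-terms double up to full terms and, using $\#[\chi]=\#[\chi]_\R+\#[\chi]_{\R^c}$ and $\#\widetilde{[\chi]}=\#\widetilde{[\chi]}_\R+\#\widetilde{[\chi]}_{\R^c}$ together with \eqref{multi echi}, the right-hand side collapses to $m(e_\chi)+m(-e_\chi)$.
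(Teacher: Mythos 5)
Your proof is correct and follows essentially the same route as the paper's: both partition $\widehat G$ into real characters and conjugate pairs, observe that a real character contributes a single eigenvector with eigenvalue exactly $e_\rho$ while a non-real conjugate pair contributes one eigenvector to each of $\pm e_\rho$, and then count using Lemma~\ref{Ceros} to see that $[\chi]_{\R^c}$ and $\widetilde{[\chi]}_{\R^c}$ split into such pairs. Your packaging via the block $\bigl(\begin{smallmatrix}0 & e_\rho\\ e_\rho & 0\end{smallmatrix}\bigr)$ on $\mathrm{span}\{v_\rho,v_{\rho^{-1}}\}$ is a slightly tidier way of organizing what the paper does with the explicit vectors $|e_\rho|v_\rho\pm e_\rho v_{\rho^{-1}}$ and a case split on the sign of $e_\rho$, but the content is identical.
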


\begin{proof}
By Lemma \ref{X e Y}, the eigenvalues of $X^{+}(G,S)$ are either $e_\chi=0$ (with eigenvectors $v_\chi$ and $v_{\chi^{-1}}$) 
or $\pm|e_\chi|$ (with eigenvector $V_\chi^\pm = |e_\chi|v_\chi \pm e_\chi v_{\chi^{-1}})$ for $\chi \in \widehat G$. 
Moreover, since $S$ is symmetric the eigenvalues are real (the adjacency matrix of $X(G,S)$ is symmetric) and hence given by $\pm e_\chi$.
We will see that the only characters that can contribute to the multiplicity of $\pm e_\chi$ are those $\rho \in \widehat{G}$ such that either $\rho \in [\chi]$ or $\rho \in \widetilde{[\chi]}$. 
For clarity, we split the proof into cases.
	
\smallskip 
\noindent 
($a$) Suppose first that $\rho\in [\chi]_\R$, that is $\rho\sim \chi$ and $\rho^{-1} =\rho$. 
If $e_\rho> 0$, then $|e_\rho| = e_\rho$ is an eigenvalue with eigenvector $V_\rho^+ = 2 e_\rho v_\rho$, by Lemma~\ref{X e Y}. 
Moreover, $-|e_\rho|$ is not an eigenvalue because we would have $V_\rho^- = e_\rho v_{\rho} - e_\rho v_{\rho^{-1}} = 0$. 
Similarly, if $e_\rho<0$, then $|e_\rho|= -e_\rho$ is not an eigenvalue, although $-|e_\rho| = e_\rho$ is an eigenvalue with eigenvector 
$V_\rho^- = -2 e_\rho v_\rho$. Therefore, each real character $\rho \sim \chi$ contributes $1$ to the multiplicity of 
$e_\chi$ and $0$ to the one of $-e_\chi$.  
	
\smallskip
\noindent ($b$)
Now, assume that $\rho \in [\chi]_{\R^c}$, that is $\rho \sim \chi$ and $\rho^{-1} \ne \rho$. 
Firstly, if $e_\rho>0$, then $|e_\rho| = e_\rho$ is an eigenvalue with eigenvector 
$V_\rho^+ = e_\rho (v_\rho + v_{\rho^{-1}})$. Notice that, by Lemma \ref{Ceros}, $e_{\rho^{-1}} = e_\rho$, since $S$ is symmetric. Thus, 
$e_{\rho^{-1}}$ is an eigenvalue with eigenvector 
$$ V_{\rho^{-1}}^+ = |e_{\rho^{-1}}| v_{\rho^{-1}}+e_{\rho^{-1}} v_\rho = e_{\rho} (v_\rho + v_{\rho^{-1}}) = V_\rho^+.$$ 
Hence, $\rho^{-1}$ does not contribute to the multiplicity of $e_\chi$.
Therefore, the characters $\rho$ and $\rho^{-1}$ both contribute only one to the multiplicity of the eigenvalue $e_\chi$. 
On the other hand, we have that 
$-|e_\rho| = -e_{\rho} = -|e_{\rho^{-1}}|$ is an eigenvalue with eigenvectors 
$V_\rho^- = e_{\rho} (v_{\rho} - v_{\rho^{-1}})$ and $V_{\rho^{-1}}^- = e_{\rho}(v_{\rho^{-1}} - v_\rho)$. 
Then $V_{\rho^{-1}}^- = -V_\rho^-$ and, thus, the characters $\rho$ and $\rho^{-1}$ contribute only one to $m^+(-e_\rho)$.

Secondly, if $e_\rho<0$, then  $|e_\rho| = -e_\rho = |e_{\rho^{-1}}|$ is an eigenvalue of $X^{+}(G,S)$ with eigenvectors 
$V_\rho^+ = e_{\rho} (v_{\rho^{-1}} - v_\rho)$ and 
$V_{\rho^{-1}}^+ = e_{\rho^{-1}} (v_\rho - v_{\rho^{-1}})$,  
and thus $V_{\rho^{-1}}^+ = -V_{\rho}^+$. 
On the other hand,
$-|e_\rho| = e_{\rho} = -|e_{\rho^{-1}}|$ is an eigenvalue of $X^{+}(G,S)$ with eigenvector $-e_{\rho}(v_\rho + v_{\rho^{-1}})$, and therefore the characters $\rho$ and $\rho^{-1}$ contribute only one to the multiplicity of the eigenvalue $\pm e_\rho$.

\smallskip 
($c$) Suppose now that $\rho\in \widetilde{[\chi]}$, that is $\rho \in \widehat{G}$ and $e_{\rho} = -e_\chi$. 
By proceeding similarly as before we have that if $\rho$ is a real character this contributes in $1$ to the multiplicity of $-e_\chi$ and does not contribute to the multiplicity of $e_\chi$; and, on the other hand, if $\rho$ is a non-real character then $\rho$ and $\rho^{-1}$ contribute in $1$ to the multiplicity of the eigenvalues $\pm e_\chi$

By putting together the information in ($a$), ($b$) and ($c$) we get \eqref{m+}.

Now, if $e_\chi\ne 0$ then, by \eqref{notations} and \eqref{multi echi} we have
$$ m(e_\chi) + m(-e_\chi) = \#[\chi] + \# \widetilde{[\chi]} = (\# [\chi]_\R + \# [\chi]_{\R^c}) + (\# \widetilde{[\chi]}_\R + \# \widetilde{[\chi]}_{\R^c}).$$
Therefore, by \eqref{m+}, we have $m(e_\chi) +m(-e_\chi) = m^{+}(e_\chi) + m^{+}(-e_\chi)$, as it was to be shown.

Finally suppose that $\chi\in\widehat{G}$ with $e_\chi=0$. If $\chi^{-1} = \chi$, the contribution of $\chi$ to the multiplicity of $0$ is one, since $v_\chi = v_{\chi^{-1}}$. 
On the other hand, if $\chi^{-1}\ne \chi$, then $e_{\chi^{-1}}=0$ by Lemma \ref{Ceros}. In this case both $e_\chi$ and $e_{\chi^{-1}}$ are eigenvalues with eigenvectors $v_{\chi}$ and $v_{\chi^{-1}}$ and thus $m^{+}(0)=m(0)$.
\end{proof}

We now show spectrally that $X(G,S)$ and $X^+(G,S)$ share some structural properties. 
\begin{coro} \label{cxo nobip}
$X(G,S)$ is a connected and non-bipartite if and only if $X^+(G,S)$ is a connected and non-bipartite. 
\end{coro}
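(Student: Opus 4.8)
The plan is to characterize connectedness and non-bipartiteness of a regular graph spectrally, and then use Proposition~\ref{Multiplicity} to transfer these characterizations between $X(G,S)$ and $X^+(G,S)$. Recall the standard facts for a $k$-regular graph $\G$ on $n$ vertices with principal eigenvalue $k$: the graph is connected if and only if the multiplicity of $k$ equals $1$, and it is non-bipartite if and only if $-k$ is \emph{not} an eigenvalue (equivalently, the spectrum is not symmetric about $0$ at the extreme value). Both $X(G,S)$ and $X^+(G,S)$ are $|S|$-regular, so their common principal eigenvalue is $k=|S|$, attained by the principal character $\chi_0$ with $e_{\chi_0}=|S|$.

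First I would handle connectedness. The graph $X(G,S)$ is connected $\iff m(|S|)=1$, and $X^+(G,S)$ is connected $\iff m^+(|S|)=1$. Taking $\chi=\chi_0$ (so $e_\chi=|S|>0$), I would apply the relation \eqref{rel m and m+} from Proposition~\ref{Multiplicity}. Here I must also observe that $-|S|=-e_{\chi_0}$ cannot arise from the same sign regime unless there is a character $\rho$ with $e_\rho=-|S|$; since $|S|=e_{\chi_0}$ is the principal (maximal) eigenvalue and $|e_\rho|\le|S|$ for all $\rho$, the only way $m(-|S|)\neq 0$ is bipartiteness. The clean approach is to note that \eqref{rel m and m+} gives
\[
m(|S|)+m(-|S|)=m^{+}(|S|)+m^{+}(-|S|),
\]
and then argue that the ``$-|S|$'' terms match up on both sides (handled by the non-bipartite step below), forcing $m(|S|)=1 \iff m^+(|S|)=1$.

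Next I would treat non-bipartiteness. For a connected $|S|$-regular graph, non-bipartite $\iff -|S|$ is not an eigenvalue. For $X(G,S)$ this means $m(-|S|)=0$, i.e.\ $\widetilde{[\chi_0]}=\varnothing$ (no character $\rho$ with $e_\rho=-|S|$). For $X^+(G,S)$ I would compute $m^+(-|S|)$ from \eqref{m+} with $\chi=\chi_0$: since $[\chi_0]_{\R^c}\subseteq[\chi_0]$ and $\widetilde{[\chi_0]}$ decomposes as $\widetilde{[\chi_0]}_\R\cup\widetilde{[\chi_0]}_{\R^c}$, the formula shows $m^+(-|S|)=0$ precisely when $\widetilde{[\chi_0]}_\R=\widetilde{[\chi_0]}_{\R^c}=\varnothing$, i.e.\ when $\widetilde{[\chi_0]}=\varnothing$, which is exactly the condition $m(-|S|)=0$. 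Thus $X(G,S)$ is non-bipartite $\iff X^+(G,S)$ is non-bipartite. Combining with the connectedness equivalence gives the corollary.

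The main obstacle I anticipate is bookkeeping the half-integer contributions in \eqref{m+}: a priori the two displayed multiplicity formulas involve $\tfrac12\#[\chi]_{\R^c}$ and $\tfrac12\#\widetilde{[\chi]}_{\R^c}$, and I must verify that at the \emph{extremal} eigenvalue $\pm|S|$ these fractional terms either vanish or pair up to integers so that the equivalences are exact rather than merely ``up to a discrepancy.'' The key simplifying observation is that $\chi_0$ is real ($\chi_0^{-1}=\chi_0$), so $\chi_0\in[\chi_0]_\R$ and contributes cleanly; and for non-bipartiteness the relevant set $\widetilde{[\chi_0]}$ is empty or not as a whole, which makes all four pieces in \eqref{m+} vanish simultaneously. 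Once this is pinned down, both halves of the corollary follow formally from Proposition~\ref{Multiplicity} without further computation.
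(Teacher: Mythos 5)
Your overall strategy (reduce both properties to multiplicities of $\pm\lambda_0$ and transfer them via Proposition~\ref{Multiplicity}) is the right one, but the way you split the statement into two \emph{separate} biconditionals introduces a genuine gap. Your non-bipartiteness step asserts that formula \eqref{m+} ``shows $m^+(-|S|)=0$ precisely when $\widetilde{[\chi_0]}=\varnothing$.'' That is false: with $\chi=\chi_0$ the formula reads
$m^{+}(-|S|) = \#\widetilde{[\chi_0]}_\R + \tfrac 12 \#[\chi_0]_{\R^c} + \tfrac 12 \#\widetilde{[\chi_0]}_{\R^c}$,
and the middle term $\tfrac12\#[\chi_0]_{\R^c}$ does not vanish just because $\widetilde{[\chi_0]}$ is empty; it is nonzero exactly when some non-real character $\rho$ satisfies $e_\rho=|S|$, i.e.\ when $X(G,S)$ is disconnected through a non-real character. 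Concretely, take $G=\Z_9$ and $S=\{3,6\}$: then $X(G,S)=3C_3$ has spectrum $\{[2]^3,[-1]^6\}$, so $m(-2)=0$, yet $[\chi_0]=\{\chi_0,\chi_3,\chi_6\}$ gives $m^+(-2)=\tfrac12\cdot 2=1$ (indeed $X^+(G,S)$ has a $C_6$ component). So the standalone claim ``$X(G,S)$ non-bipartite $\iff$ $X^+(G,S)$ non-bipartite'' is wrong, and your closing remark that emptiness of $\widetilde{[\chi_0]}$ ``makes all four pieces in \eqref{m+} vanish simultaneously'' is exactly where the error hides. Your connectedness step is also not self-contained, since you defer the matching of the $-|S|$ terms to this flawed non-bipartite step.

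The fix — and the paper's actual proof — is to treat the conjunction jointly rather than componentwise. Since $m(\lambda_0)\ge 1$ and $m^+(\lambda_0)\ge 1$ always, ``connected and non-bipartite'' is equivalent to the single equation $m(\lambda_0)+m(-\lambda_0)=1$ (resp.\ $m^+(\lambda_0)+m^+(-\lambda_0)=1$), and relation \eqref{rel m and m+} says these two sums are equal; the corollary follows in two lines with no case analysis of the half-integer terms. Alternatively, your approach can be repaired by running the two implications in the right order: connectedness of $X(G,S)$ forces $[\chi_0]=\{\chi_0\}$, hence $[\chi_0]_{\R^c}=\varnothing$, and only \emph{then} does non-bipartiteness reduce to $\widetilde{[\chi_0]}=\varnothing$ on both sides. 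But as written, the decomposition into two independent equivalences does not hold.
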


\begin{proof}
Recall that a $k$-regular graph is connected if and only if $m(\lambda_0)=1$ and it is non-bipartite if and only if $m(-\lambda_0)=0$, where $\lambda_0=k$ is the principal eigenvalue. For the graphs $X(G,S)$ and $X^+(G,S)$, since $\lambda_0=e_{\chi_0}=\lambda_0^+$, by \eqref{rel m and m+} we have 
$m(\lambda_0) + m(-\lambda_0) = m^{+}(\lambda_0) + m^{+}(-\lambda_0)$.
If $X(G,S)$ is connected and non-bipartite we have that $1=m^{+}(\lambda_0) + m^{+}(-\lambda_0)$ and since $m^{+}(\lambda_0) \ge 1$ we must have $m^{+}(\lambda_0)=1$ and $m^{+}(-\lambda_0)=0$. The converse is analogous.
\end{proof}

We are now in a position to show that Cayley graphs and Cayley sum graphs defined over the same abelian symmetric pair 
are always equienergetic.

\begin{thm} \label{equienergetic}
Let $(G,S)$ be a finite abelian symmetric pair. 
Then, the regular graphs $X(G,S)$ and $X^+(G,S)$ are equienergetic.
\end{thm}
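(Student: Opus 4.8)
The plan is to exploit the fact that the energy of a graph depends only on the \emph{absolute values} of its eigenvalues, and to combine this with the multiplicity relation \eqref{rel m and m+} already established in Proposition~\ref{Multiplicity}. By Lemma~\ref{X e Y}$(a)$ the eigenvalues of $X(G,S)$ are exactly the numbers $e_\chi$, $\chi \in \widehat G$, and these are real because $S$ is symmetric (Lemma~\ref{Ceros}); likewise the eigenvalues of $X^+(G,S)$ are the numbers $\pm e_\chi$. Writing each energy as a sum over distinct eigenvalues weighted by their multiplicities gives
$$E(X(G,S)) = \sum_{\lambda} m(\lambda)\,|\lambda| \qquad\text{and}\qquad E(X^+(G,S)) = \sum_{\lambda} m^+(\lambda)\,|\lambda|.$$

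First I would discard the zero eigenvalue, which contributes nothing to either sum, and group the remaining eigenvalues by their common absolute value. Let $c_1 > \cdots > c_t > 0$ be the distinct positive values occurring among the nonzero $e_\chi$. For each $c_j$, the only eigenvalues of \emph{either} graph with absolute value $c_j$ are $c_j$ and $-c_j$, so the contribution of this block to the two energies is respectively
$$c_j\big(m(c_j) + m(-c_j)\big) \qquad\text{and}\qquad c_j\big(m^+(c_j) + m^+(-c_j)\big).$$

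Next I would invoke \eqref{rel m and m+}: picking any $\chi$ with $e_\chi = c_j \neq 0$, the proposition yields $m(c_j) + m(-c_j) = m^+(c_j) + m^+(-c_j)$, so the two block contributions coincide. Summing over $j = 1, \ldots, t$ and adding the vanishing contribution from $\lambda = 0$ (where $m^+(0) = m(0)$) gives $E(X(G,S)) = E(X^+(G,S))$, as desired.

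I do not anticipate a serious obstacle, since the essential content has been packaged into Proposition~\ref{Multiplicity}: passing from $X(G,S)$ to $X^+(G,S)$ merely redistributes multiplicity within each symmetric pair $\{c_j, -c_j\}$ without altering the total mass on that pair, hence without changing the energy. The only point needing a little care is that the grouping by absolute value be both exhaustive and non-overlapping, which is automatic once one knows from Lemma~\ref{Ceros} that every $e_\chi$ is real.
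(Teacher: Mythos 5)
Your proposal is correct and follows essentially the same route as the paper: grouping the eigenvalues by common absolute value is exactly the paper's quotient $\widehat G/\!\!\approx$ (where $\chi \approx \chi'$ iff $e_\chi = \pm e_{\chi'}$), and both arguments then reduce equienergeticity to the multiplicity identity \eqref{rel m and m+} of Proposition~\ref{Multiplicity}, with the zero eigenvalue contributing nothing. No gap.
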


\begin{proof}
Denote by $\widehat G /\!\!\approx$
the set of equivalence classes of the relation $\approx$ given by 
$\chi \approx \chi'$ if and only if $e_\chi = \pm e_{\chi'}$.
That is, $\widehat G /\!\!\approx$ equals $(\widehat{G} / \!\!\sim)/_{\{\pm 1\}}$.
We have
$$E(X(G,S)) = \sum_{\chi \in \widehat{G}} |e_\chi| = \sum_{\chi \in \widehat{G}/\approx} 
	\{ m(e_\chi) + m(-e_\chi) \} \,|e_\chi|. $$ 
Now, by \eqref{rel m and m+} in Proposition \ref{Multiplicity}	we get 
$$ E(X(G,S)) = 	\sum_{\chi\in \widehat{G}/\approx} \{ m^{+}(e_\chi) + m^{+}(-e_\chi) \} \, |e_\chi| = E(X^+(G,S)),$$
and the result follows.
\end{proof}

\begin{exam}[\textit{Circulant graphs}] \label{circulant}
Circulant graphs, introduced in \cite{Fu}, are Cayley graphs defined over cyclic groups, that is of the form $X(\Z_n, S)$.
If $S$ is a symmetric subset of $\Z_n$ not containing $0$, then the graphs  $X(\Z_n,S)$ and $X^+(\Z_n,S)$ are equienergetic. 
This includes the cases of $n$-cycles $C_n$ and $n$-paths 
with two loops at the ends $\hat P_n$, taking $S=\{\pm 1 \pmod n\}$, and the unitary Cayley graphs 
$U_n$, taking $S=\Z_n^*$.
Namely, we have 
$$C_n=X(\Z_n,\{\pm 1\}), \quad \hat P_{n} =X^+(\Z_{n},\{\pm 1\}), \quad \text{ and } \quad U_n = X(\Z_n,\Z_n^*),$$
where $n$ is odd for $\hat P_n$ (if $n>2$ is even then $\hat P_n=C_n$). 
Thus, we obtain that 
$E(C_n)=E(\hat P_n)$ and $E(U_n)=E(U_n^+)$, 
where $U_n^+ := X^+(\Z_n,\Z_n^*)$. 
Note that $U_3=C_3$ and $U_3^+ =\hat P_3$, but in general these graphs are all different for odd $n \ge 5$.
\hfill $\lozenge$
\end{exam}

\subsubsection*{Non-isospectrality}
We recall that one of our goals is to construct pairs of equienergetic non-isospectral graphs of the form $X(G,S)$ and 
$X^+(G,S)$.

We first give a simple condition for a pair $X(G,S)$ and 
$X^+(G,S)$ to be isospectral. 

\begin{coro} \label{isospcond}
Let $(G,S)$ be an abelian symmetric pair. 
If $Spec(X(G,S))$ and $Spec(X^+(G,S))$ are both symmetric, then $X(G,S)$ and $X^+(G,S)$ are isospectral. 	
\end{coro}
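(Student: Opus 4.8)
The plan is to prove that the two graphs have identical spectra by showing that $m(\lambda) = m^+(\lambda)$ for every real number $\lambda$, where $m$ and $m^+$ denote the multiplicity functions of $X(G,S)$ and $X^+(G,S)$ introduced before Proposition~\ref{Multiplicity}. The essential tool is the relation \eqref{rel m and m+} from that proposition, namely $m(e_\chi)+m(-e_\chi) = m^+(e_\chi)+m^+(-e_\chi)$ for $e_\chi \neq 0$, together with the identity $m^+(0)=m(0)$. Since $S$ is symmetric, Lemma~\ref{Ceros} guarantees that every $e_\chi$ is real, so all eigenvalues in play are real and both graphs have eigenvalue support contained in $\{\pm e_\chi : \chi \in \widehat G\}$.

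The key step is to exploit the two symmetry hypotheses to pass from the relation on \emph{sums} of multiplicities to equalities of the \emph{individual} multiplicities. By hypothesis $Spec(X(G,S))$ is symmetric, so $m(e_\chi) = m(-e_\chi)$, and likewise $Spec(X^+(G,S))$ symmetric gives $m^+(e_\chi) = m^+(-e_\chi)$. Substituting both into \eqref{rel m and m+} for a nonzero $e_\chi$ yields $2\,m(e_\chi) = 2\,m^+(e_\chi)$, whence $m(e_\chi) = m^+(e_\chi)$; applying the same two symmetries once more gives $m(-e_\chi)=m(e_\chi)=m^+(e_\chi)=m^+(-e_\chi)$. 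Thus the multiplicities agree at every nonzero eigenvalue.

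It then remains to handle $\lambda=0$, which is immediate from $m^+(0)=m(0)$, and to verify that the eigenvalue supports genuinely coincide: each $e_\chi$ is an eigenvalue of $X(G,S)$, and symmetry of its spectrum forces $-e_\chi$ to be one as well, so $\{e_\chi\}$ already equals $\{\pm e_\chi\}$, which is the support of $X^+(G,S)$. Combining the three cases gives $m(\lambda)=m^+(\lambda)$ for all real $\lambda$, that is $Spec(X(G,S)) = Spec(X^+(G,S))$. I expect the only real subtlety -- the ``obstacle'' -- to be conceptual rather than computational: one must notice that \eqref{rel m and m+} by itself controls only the paired sum $m(e_\chi)+m(-e_\chi)$, and that the two symmetry assumptions are exactly what is needed to split that sum into matching halves. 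Once this is recognized, the argument reduces to a one-line substitution.
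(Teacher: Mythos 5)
Your argument is correct and is essentially the paper's own proof: both substitute the two symmetry hypotheses $m(e_\chi)=m(-e_\chi)$ and $m^+(e_\chi)=m^+(-e_\chi)$ into the relation \eqref{rel m and m+} to get $m(e_\chi)=m^+(e_\chi)$, with the $\lambda=0$ case handled by $m^+(0)=m(0)$. Your extra remarks about the eigenvalue supports coinciding are a harmless elaboration of what the paper leaves implicit.
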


\begin{proof}
By hypothesis, $m(e_\chi) = m(-e_\chi)$ and $m^+(e_\chi) = m^+(-e_\chi)$ for every $\chi \in \hat G$. 
From \eqref{rel m and m+} in Proposition \ref{Multiplicity}, we immediately have that $m(e_\chi) = m^+(e_\chi)$ for every $\chi \in \hat G$, as desired. 
\end{proof}

As a consequence of Proposition \ref{Multiplicity}, we obtain the following condition for non-isospectrality.
\begin{prop} \label{coromult}
Let $(G,S)$ be a finite abelian symmetric pair such that the principal character $\chi_0$ is the only real character of $G$ 
$($for instance if $|G|$ is odd$)$. 
For each $\chi \in \widehat G \smallsetminus \{\chi_0\}$, 
with $e_\chi\neq 0, \pm |S|$, we have 
\begin{equation} \label{equal m+s}
m^{+}(e_\chi) = m^{+}(-e_\chi) = \tfrac{1}{2} \{ m(e_\chi) + m(-e_\chi) \}.
\end{equation} 
In particular, $Spec(X(G,S))$ determines $Spec(X^+(G,S))$.
If, in addition, $G$ has a non-trivial character $\chi$ such that $-e_{\chi}$ is not an eigenvalue of $X(G,S)$, then $X(G,S)$ and $X^{+}(G,S)$ are non-isospectral.
\end{prop}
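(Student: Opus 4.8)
The plan is to push the hypothesis on real characters directly through the multiplicity formula \eqref{m+} of Proposition~\ref{Multiplicity}. First I would record the two consequences of the assumption that $\chi_0$ is the only real character: by Remark~\ref{real chars} a character $\rho$ is real exactly when $\rho^{-1}=\rho$, so $\chi_0$ is the only self-inverse character, and moreover $e_{\chi_0}=|S|$. Now fix $\chi\in\widehat G\smallsetminus\{\chi_0\}$ with $e_\chi\neq 0,\pm|S|$. I claim that both $[\chi]_\R$ and $\widetilde{[\chi]}_\R$ (from \eqref{notations}) are empty. Indeed, any $\rho\in[\chi]_\R$ is real, hence $\rho=\chi_0$ and $e_\chi=e_\rho=|S|$, contradicting $e_\chi\neq|S|$; likewise any $\rho\in\widetilde{[\chi]}_\R$ forces $-e_\chi=e_{\chi_0}=|S|$, contradicting $e_\chi\neq-|S|$.

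Once these two sets are empty, the formula \eqref{m+} collapses: both $m^{+}(e_\chi)$ and $m^{+}(-e_\chi)$ reduce to $\tfrac12\#[\chi]_{\R^c}+\tfrac12\#\widetilde{[\chi]}_{\R^c}$, which already gives $m^{+}(e_\chi)=m^{+}(-e_\chi)$. Since $[\chi]=[\chi]_{\R^c}$ and $\widetilde{[\chi]}=\widetilde{[\chi]}_{\R^c}$ now hold, I would finish by invoking \eqref{multi echi} to rewrite $\#[\chi]=m(e_\chi)$ and $\#\widetilde{[\chi]}=m(-e_\chi)$, obtaining \eqref{equal m+s}.

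For the assertion that $Spec(X(G,S))$ determines $Spec(X^{+}(G,S))$, I would show how to recover every $m^{+}(\lambda)$ from the data $\{m(\lambda)\}$: the value $m^{+}(0)=m(0)$ is supplied by Proposition~\ref{Multiplicity}, each $\lambda\neq 0,\pm|S|$ is handled by \eqref{equal m+s}, and the two exceptional values $\pm|S|$ are computed by applying \eqref{m+} to $\chi_0$ itself. Here $[\chi_0]_\R=\{\chi_0\}$ while $\widetilde{[\chi_0]}$ contains no real character (as $-|S|\neq|S|$), so one gets $m^{+}(|S|)=1+\tfrac12(m(|S|)-1)+\tfrac12 m(-|S|)$ and $m^{+}(-|S|)=\tfrac12(m(|S|)-1)+\tfrac12 m(-|S|)$, both expressed through $m(\pm|S|)$.

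Finally, for non-isospectrality I would argue that the witness breaks the symmetry of the spectrum. If $\chi\neq\chi_0$ satisfies $-e_\chi\notin Spec(X(G,S))$, then $m(-e_\chi)=0$, which forces $e_\chi\neq 0$ and $e_\chi\neq-|S|$ (otherwise $-e_\chi=|S|$ would be the principal eigenvalue, hence in the spectrum). In the main case $e_\chi\neq|S|$, equation \eqref{equal m+s} yields $m^{+}(e_\chi)=\tfrac12 m(e_\chi)$, while $m(e_\chi)\geq 2$ because $\chi$ and $\chi^{-1}$ are distinct members of $[\chi]$ (distinct since $\chi$ is non-real, and both in $[\chi]$ by Lemma~\ref{Ceros}); hence $m^{+}(e_\chi)\neq m(e_\chi)$ and the graphs cannot be isospectral. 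The leftover case $e_\chi=|S|$, which can occur only when $X(G,S)$ is disconnected, is dispatched by the explicit formula of the previous paragraph: with $m(-|S|)=0$ it gives $m^{+}(|S|)=\tfrac12(m(|S|)+1)\neq m(|S|)$ once $m(|S|)\geq 2$. I expect the only delicate point to be the bookkeeping at the principal eigenvalue $\pm|S|$, which \eqref{equal m+s} deliberately excludes and which must be treated separately by hand.
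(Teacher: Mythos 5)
Your proof is correct and follows essentially the same route as the paper: deduce $[\chi]_\R=\widetilde{[\chi]}_\R=\varnothing$ from the hypothesis on real characters, feed this into \eqref{m+}, and then compare multiplicities at the witness character. You are in fact more careful than the paper at one point: the paper's proof of the final assertion applies \eqref{equal m+s} to the witness $\chi$ without ruling out $e_\chi=|S|$ (which can occur when $X(G,S)$ is disconnected and non-bipartite), and your separate computation of $m^{+}(\pm|S|)$ via the class $[\chi_0]$ closes exactly that gap, while also making explicit why $Spec(X(G,S))$ determines $Spec(X^+(G,S))$ at the principal eigenvalue.
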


\begin{proof}
By Remark \ref{real chars} and \eqref{notations} we have that $[\chi]_\R = \widetilde{[\chi]}_\R = \varnothing$. Hence, by 
Proposition~\ref{Multiplicity}, 
we get that  $m^{+}(e_\chi) = m^{+}(-e_\chi)$ and thus $2m^{+}(e_\chi) = m(e_\chi) + m(-e_\chi)$, from which \eqref{equal m+s} follows.

Now, if $\chi_0 \ne \chi \in \widehat G$ such that 
$-e_{\chi}$ is not an eigenvalue of $X(G,S)$, then 
$$0 = m(-e_{\chi})< \tfrac 12 m(e_{\chi}) = m^{+}(-e_{\chi}).$$ 
This implies the last assertion in the statement.
\end{proof}

\begin{exam} \label{c3p3}
($i$) The odd cycles and odd paths with loops at the ends are equienergetic non-isospectral 2-regular graphs. 
Indeed, $C_{2n+1}= X(\Z_{2n+1},\{\pm 1\})$ and $\hat P_{2n+1} = X^+(\Z_{2n+1},\{\pm 1\})$ are equienergetic by 
Example \ref{circulant}.
We now show that they have different spectra. 
The spectrum of $C_n$ is well-known, for $n\ge 1$ we have
$$Spec(C_{2n+1}) = \big\{ 2\cos(\tfrac{2\pi j}{2n+1}) \big\}_{0\le j \le 2n}.$$ 
If $\omega= e^{\frac{2\pi i}{2n+1}}$ denotes the $(2n+1)$-th primitive root of unity and $\chi$ is the associated character, then $-e_\chi = -2 Re (\omega) \not \in Spec(C_{2n+1})$ and hence, by Proposition \ref{coromult}, the graphs $C_{2n+1}$ and $\hat P_{2n+1}$ are not isospectral.

Note that $Spec(C_{2n+1})$ is integral if and only if $n=1$. In this case we have 
$$Spec(C_3)=\{ [2]^1,[-1]^{2} \} \qquad \text{and} \qquad Spec(\hat P_3) = \{ [2]^1, [1]^1,[-1]^1 \},$$ 
since $\hat P_3$ has adjacency matrix $A=\left[\begin{smallmatrix}
1&1&0 \\ 1&0&1 \\ 0&1&1 \end{smallmatrix}\right]$, and hence $E(C_3)=E(\hat P_3)=4$. 
In this way, the connected 2-regular graphs $C_3$ and $\hat P_3$ 
$$
\begin{tikzpicture}[scale=.675, thick]
\fill (-1,0) circle (3pt); 
\fill (1,0) circle (3pt); 
\fill (0,1.7) circle (3pt); 
\draw (-1,0) -- (1,0) -- (0,1.7) -- (-1,0);
\node at (0,-.5) {$C_3$};
\fill (5,0.85) circle (3pt); 
\fill (6.5,0.85) circle (3pt); 
\fill (8,0.85) circle (3pt); 
\draw (4.5,0.85) circle (0.5);
\draw (8.5,0.85) circle (0.5);
\draw (5,0.85) -- (6.5,0.85) -- (8,0.85);
\node at (6.5,-.5) {$\hat P_3$};
\end{tikzpicture} 
$$
are integral equienergetic non-isospectral, one having a cycle and no loops while the other is acyclic with loops. This is the smallest possible example of this kind.

($ii$) More generally, consider the subset $S_r=\{ \pm r \pmod n\}$ of $\Z_n$. If $(r,n)=1$ then $X(\Z_n,S_r)$ is isomorphic to $X(\Z_n, S_1)$ and  $X^+(\Z_n,S_r)$ is isomorphic to $X^+(\Z_n, S_1)$, since the $\mathbb{Z}_n$-automorphism which sends $1$ to $r$ maps $S_1$ in $S_r$. Then, all the graphs in the family
$\{ X(\Z_n,S_r), X^+(\Z_n, S_r)\}_{(r,n)=1}$ 
are equienergetic and each pair $\{ X(\Z_n,S_r), X^+(\Z_n, S_t)\}$, with $r,t$ coprime with $n$, is non-isospectral.
\hfill $\lozenge$
\end{exam}

\begin{rem}
	Note that under the hypothesis of Proposition \ref{coromult}, $Spec(X^+(G,S))$ does not determine $Spec(X(G,S))$. However, if in addition one has that $m(-e_\chi)=0$ or $m(-e_\chi)=m(e_\chi)$ for every $\chi\in \widehat G$, then $Spec(X^+(G,S))$ determines $Spec(X(G,S))$.
\end{rem}

Since the principal character $\chi_0 \in$ in $\widehat G$ gives rise to the principal eigenvalue in $\Gamma= X(G,S)$ from now on we will denote the eigenvalues of $\Gamma$ by $\lambda_0, \ldots, \lambda_{n-1}$ instead of $\lambda_1, \ldots,\lambda_n$, in order to have $\lambda_0 = e_{\chi_0}$.

In the sequel, we will use the following concept.
\begin{defi}
	If $\Gamma$ is a graph, we will say that $Spec(\Gamma)$ or $\G$ is \textit{almost symmetric} if the multiplicities of an eigenvalue $\lambda$ and of its opposite $-\lambda$ are the same, except for $\lambda_0$, i.e.\@ 
	$m(\lambda)=m(-\lambda)$ for every $\lambda \ne \lambda_0$ (note that $\lambda=0$ automatically satisfies this and it is allowed that $m(-\lambda)=0$). If in addition $|m(\lambda_0)-m(-\lambda_0)|=1$ holds, then we say that $Spec(\Gamma)$ or $\G$ is \textit{strongly almost symmetric}.
\end{defi}

It is known that the sum of the eigenvalues of non-directed graphs equals $0$. 
This is not true for the sum graphs $\G=X^+(G,S)$, in general. However, if $\G$ is almost symmetric, 
we have 
$\sum_{i} \lambda_i = |m(\lambda_0) - m(-\lambda_0)| \, \lambda_0$.
If, further, $\G$ is strongly almost symmetric then 
$\sum_{i} \lambda_i = \lambda_0$.

For strongly almost symmetric graphs, connectivity is equivalent to non-bipartiteness. 
\begin{prop} \label{conn=nobip}
	If $\G$ is a strongly almost symmetric graph, then $\G$ is connected if and only if $\G$ is non-bipartite.
\end{prop}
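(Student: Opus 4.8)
The plan is to reduce everything to the two standard spectral criteria for regular graphs that were already recalled in the proof of Corollary~\ref{cxo nobip}: a regular graph with principal eigenvalue $\lambda_0$ is connected if and only if $m(\lambda_0)=1$, and it is non-bipartite if and only if $m(-\lambda_0)=0$. Since $\Gamma$ is strongly almost symmetric, I also have at my disposal the single numerical constraint $|m(\lambda_0)-m(-\lambda_0)|=1$ coming directly from the definition, and this is essentially the whole engine of the argument.

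First I would dispose of the easy implication. Assuming $\Gamma$ is non-bipartite gives $m(-\lambda_0)=0$; substituting this into $|m(\lambda_0)-m(-\lambda_0)|=1$ forces $m(\lambda_0)=1$ (the multiplicity being a non-negative integer fixes the sign), and hence $\Gamma$ is connected.

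For the converse I would assume $\Gamma$ connected, so that $m(\lambda_0)=1$. The constraint $|1-m(-\lambda_0)|=1$ then leaves only the two possibilities $m(-\lambda_0)=0$ or $m(-\lambda_0)=2$. The first yields non-bipartiteness and finishes the proof; the second must be excluded. The extra fact I would invoke here is that the multiplicity of $-\lambda_0$ can never exceed that of $\lambda_0$ --- equivalently, the number of bipartite connected components is at most the number of connected components, which for a connected graph is simply the Perron--Frobenius statement that $\lambda_0$ is simple and $-\lambda_0$ occurs with multiplicity at most $1$. Since $m(\lambda_0)=1$, this rules out $m(-\lambda_0)=2$, leaving $m(-\lambda_0)=0$, i.e.\ $\Gamma$ is non-bipartite.

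The hard part, or rather the only nontrivial point, is precisely this last exclusion: the strong-almost-symmetry inequality by itself admits the spurious solution $m(-\lambda_0)=2$, so the argument genuinely needs the auxiliary bound $m(-\lambda_0)\le m(\lambda_0)$. I would therefore state this bound explicitly (it is standard, following from Perron--Frobenius applied to the non-negative adjacency matrix of $\Gamma$) rather than leave it implicit, since everything else in the proof is immediate from the definition of strongly almost symmetric together with the criteria of Corollary~\ref{cxo nobip}.
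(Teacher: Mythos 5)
Your proof is correct and follows essentially the same route as the paper, which simply writes $m(\lambda_0)=t+1$ and $m(-\lambda_0)=t$ and sets $t=0$ in each direction. The only difference is that you make explicit the bound $m(-\lambda_0)\le m(\lambda_0)$ needed to exclude the spurious case $m(-\lambda_0)=m(\lambda_0)+1$; the paper uses this same fact silently when it fixes the sign in $|m(\lambda_0)-m(-\lambda_0)|=1$, so your version is if anything slightly more careful.
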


\begin{proof}
Since $\G$ is strongly almost symmetric, we have $m(\lambda_0)=t+1$ and $m(-\lambda_0)=t$ for some $t\in \N_0$, where $\lambda_0$ is the principal eigenvalue of $\G$. 
If $\G$ is connected, then $t=0$ and hence $-\lambda_0$ is not an eigenvalue of $\G$, thus $\G$ is non-bipartite. 
Conversely, if $\G$ is non-bipartite, $t=0$ and $m(\lambda_0)=1$, thus $\G$ is connected.  
\end{proof}

\section{Unitary Cayley graphs over finite rings}
Here we will produce pairs of integral equienergetic non-isospectral pairs of unitary Cayley (sum) graphs over rings.
The graphs will result connected and generically non-bipartite.

Let $R$ be a finite commutative ring with identity $1\ne 0$, $R^*$ be its group of units, and $S \subset R$ with $0 \notin S$.
Clearly, if the characteristic of $R$ is $2$, the graphs $X(R,S)$ and $X^+(R,S)$ are the same, so in the sequel we will assume that $\mathrm{char}(R) \ne 2$.
Also, notice that in odd characteristic, since $2$ is always a unit, multiplication by $2$ is a bijection of $R$. Thus, for any $y\in S$ we have that $y=2x \in S$ for some unique $x \in R$ and, hence, there are loops in exactly $|S|$ vertices of $X^+(R,S)$.

From now on, we will consider $S=R^*$ and the graphs 
$$G_R=X(R,R^*) \qquad \text{and} \qquad G_R^+=X^+(R,R^*).$$ 
Note that $(R,R^*)$ is an abelian pair since $u\in R^*$ if and only if $-u \in R^*$.
By the well-known Artin's structure theorem  
we have that 
\begin{equation} \label{artin desc}
R = R_1 \times \cdots \times R_s
\end{equation} 
where each $R_i$ is a local ring, that is having a unique maximal ideal $\frak{m}_i$. From now on, we put $r_i:=|R_i|$ and $m_i:=|\frak{m}_i|$ for $i=1,\ldots,s$.
Moreover, one also has the decomposition $R^* = R_1^* \times \cdots \times R_s^*$. 
This implies that 
\begin{equation} \label{artin units}
G_R = G_{R_1} \otimes \cdots \otimes G_{R_s} \qquad \text{and} \qquad G_R^+ = G_{R_1}^+ \otimes \cdots \otimes G_{R_s}^+
\end{equation} 
with $G_{R_i} = X(R_i,R_i^*)$, $G_{R_i}^+ = X^+(R_i,R_i^*)$, and where $\otimes$ denotes the Kronecker product. 

Every local ring has order $p^{m}$ and characteristic $p^{r}$ for some prime $p$ and $r,m \in \N$. Then, the conditions `$\mathrm{char}(R)$ is odd' and `$|R|$ is odd' are equivalent. In  fact, since $|R|=|R_1|\cdots |R_s|$ we have 
$\mathrm{char}(R) = \lcm_{1\le i \le s} \{\mathrm{char}(R_{i})\}$. 
Thus, the condition $|R|$ odd implies that $2 \in R^*$, and it can be shown that the converse also holds. Hence, we have the equivalence 
\begin{equation} \label{Rodd2R*}
|R| \text{ is odd } \quad \Leftrightarrow  \quad \mathrm{char}(R) \text{ is odd} \quad \Leftrightarrow\quad 2\in R^*.
\end{equation}

The spectrum of $G_R = X(R,R^*)$ is known. If $R$ is as in \eqref{artin desc}, put  
\begin{equation} \label{lambdaC} 
\lambda_{C} = (-1)^{|C|} \frac{|R^*|}{\prod\limits_{j \in C} (|R_j^*|/m_j)} = (-1)^{|C|} \prod\limits_{j \in C} m_j \prod\limits_{i \not\in C} |R_i^*|
\end{equation} 
for each subset $C \subseteq \{ 1, \ldots, s\}$. 
From \cite{Ki+} (see also \cite{LZ}), the eigenvalues of $G_R$ are 
\begin{equation} \label{spec GR}
\lambda = \begin{cases}
\lambda_C, & \quad \text{repeated $\prod\limits_{j \in C} (|R_j^*|/m_j)$ times,} \\ 
0,		 & \quad \text{with multiplicity $|R|- \prod\limits_{i=1}^s (1+ \tfrac{|R_i^*|}{m_i})$,}
\end{cases}
\end{equation}
where $C$ above runs over all the subsets of $\{1,2,\ldots,s\}$. 
Note that, a priori, different subsets $C$ can give the same eigenvalue.

\subsubsection*{Local rings} 
We first consider the case of a finite commutative local ring $(R, \frak m)$. Let $r=|R|$ and $m=|\frak m|$. 
Note that 
\begin{equation} \label{GRlocal}
G_R \simeq \begin{cases} K_r  & \qquad \text{if $R$ is a field}, \\[1mm] 
K_{\frac{r}{m}\times m} 		& \qquad \text{if $R$ is not a field},
\end{cases}
\end{equation}
where $K_r$ is the complete graph (this is obvious) and $K_{\frac rm\times m}$ is the complete $\frac rm$-multipartite graph of $\frac rm$ parts of size $m$ (see Proposition 2.2 in \cite{Ak+}).

We remark that in even characteristic $G_R$ coincides with $G_R^+$, as was recently observed in \cite{RAR} without proof.
For completeness, we prove 
this fact by showing that both graphs are isomorphic to a complete multipartite graph. 

\begin{lem} \label{lema GRGR+}
	If $R$ is a finite local ring with $|R|$ even then $G_R = G_R^+$. 
	In particular, $G_R^+$ is loopless.
\end{lem}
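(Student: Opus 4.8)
The plan is to work directly with the adjacency relations of the two graphs on their common vertex set $R$, using the single fact that in even order $2$ is a non-unit. Since $R$ is local, its non-units are exactly the elements of the maximal ideal $\mathfrak{m}$; combined with the equivalence \eqref{Rodd2R*}, the hypothesis $|R|$ even yields $2 \notin R^*$ and hence $2 \in \mathfrak{m}$. I would isolate this as the only structural input and deduce everything from it.

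First I would dispose of the loop statement. By definition $G_R^+ = X^+(R,R^*)$ has a loop at a vertex $v$ precisely when $v+v = 2v \in R^*$. But $2 \in \mathfrak{m}$ and $\mathfrak{m}$ is an ideal, so $2v \in \mathfrak{m}$ for every $v \in R$, i.e.\@ $2v$ is never a unit. Hence $G_R^+$ has no loops, which is the final assertion. To prove the equality $G_R = G_R^+$ I would then compare edge relations: in $G_R$ the pair $\{v,w\}$ is an edge iff $w-v \in R^*$, i.e.\@ iff $v-w \notin \mathfrak{m}$, while in $G_R^+$ it is an edge iff $v+w \in R^*$, i.e.\@ iff $v+w \notin \mathfrak{m}$. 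The key observation is that $(v+w)-(v-w) = 2w \in \mathfrak{m}$, so $v-w$ and $v+w$ lie in the same coset of $\mathfrak{m}$; in particular $v-w \in \mathfrak{m}$ if and only if $v+w \in \mathfrak{m}$. Thus $\{v,w\}$ is an edge of $G_R$ exactly when it is an edge of $G_R^+$, and the two graphs coincide.

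There is no serious obstacle here, since the whole argument rests on $2 \in \mathfrak{m}$. The one point requiring care is that the claim is a \emph{literal} equality of graphs on the same labelled vertex set $R$, not merely an isomorphism; so one must check the edge relations coset-by-coset as above rather than invoking the description $G_R \simeq K_{\frac r m \times m}$ from \eqref{GRlocal}, which would only give an isomorphism. As a sanity check, the coset viewpoint also makes the statement transparent: the non-edges of $G_R$ are the pairs with $v \equiv w \pmod{\mathfrak{m}}$ and those of $G_R^+$ the pairs with $v \equiv -w \pmod{\mathfrak{m}}$, and these agree because $w \equiv -w \pmod{\mathfrak{m}}$ for all $w$.
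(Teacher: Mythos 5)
Your argument is correct and is essentially the paper's own proof: both reduce everything to the single fact $2\in\mathfrak{m}$ (from \eqref{Rodd2R*}) and observe that $v+w$ and $v-w$ differ by $2w\in\mathfrak{m}$, hence lie in $\mathfrak{m}$ simultaneously, so the two graphs have identical edge sets. Your explicit verification that $2v\in\mathfrak{m}$ rules out loops is a slightly more direct route to the final assertion than the paper's (which gets looplessness from the equality $G_R=G_R^+$), but the substance is the same.
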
 

\begin{proof}
Let $\frak m$ be the maximal ideal of $R$.
Notice that $x$ and $y$ are adjacent in $G_R^+$ if and only if $x+y\not \in \frak{m}$.
Since $2\in \frak m$ by \eqref{Rodd2R*}, we have that 
$x+y\in \frak{m}$ if and only if $x-y\in\frak m$. For instance, $x-y \in \frak m$ implies $(x-y)+2y=x+y \in \frak m$. Therefore, $G_R$ and $G_R^+$ have the same edges and thus $G_R=G_R^+$. 
\end{proof}

We now give the spectrum of $G_R^+$ in the case $|R|$ is odd and show that $\{G_R,G_R^+\}$ is an integral equienergetic non-isospectral pair of graphs in this case. 

\begin{prop} \label{Spec GR+}
Let $(R,\frak m)$ be a finite local ring with $|R|=r$ and $|\frak m|=m$.  
If $r$ is odd then $G_R^+$ has loops and its spectrum is strongly almost symmetric given by
	\begin{equation} \label{spec gr+} 
		Spec(G^+_R) =  \{ [r-m]^1, [m]^{\frac{r-m}{2m}}, [0]^{\frac rm (m-1)}, [-m]^{\frac{r-m}{2m}} \}
	\end{equation}
if $R$ is not a field $(m>1)$ and by
	\begin{equation} \label{spec gr+ f} 
		Spec(G^+_R) =  \{ [r-1]^1, [1]^{\frac{r-1}{2}}, [-1]^{\frac{r-1}{2}} \}
	\end{equation}
if $R$ is a field $(m=1)$. 
Moreover, $G_R$ and $G_R^+$ are integral equienergetic non-isospectral connected non-bipartite graphs. 
\end{prop}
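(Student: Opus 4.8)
The plan is to derive everything from the already-known spectrum of $G_R$ in \eqref{spec GR} together with the multiplicity transfer in Proposition \ref{coromult}, which is available precisely because $r$ odd forces $\chi_0$ to be the unique real character of $R$ (Remark \ref{real chars}). First I would specialize \eqref{spec GR} to the local case $s=1$: the only subsets of $\{1\}$ are $\varnothing$ and $\{1\}$, giving $\lambda_\varnothing = |R^*| = r-m$ with multiplicity $1$ and $\lambda_{\{1\}} = -m$ with multiplicity $|R^*|/m = \tfrac{r-m}{m}$, while $0$ occurs with multiplicity $r - (1 + \tfrac{r-m}{m}) = \tfrac{r}{m}(m-1)$. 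Thus $Spec(G_R) = \{[r-m]^1, [0]^{\frac{r}{m}(m-1)}, [-m]^{\frac{r-m}{m}}\}$ when $m>1$, and $Spec(K_r) = \{[r-1]^1,[-1]^{r-1}\}$ when $m=1$; the same eigenvalues are also visible from the isomorphism \eqref{GRlocal} of $G_R$ to a complete multipartite (resp.\ complete) graph.

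Next I would record that $G_R^+$ has loops: since $2 \in R^*$ by \eqref{Rodd2R*}, there is a loop at $x$ exactly when $2x \in R^*$, and $x = 2^{-1}$ witnesses one. To obtain $Spec(G_R^+)$ I would apply Proposition \ref{coromult}. The principal eigenvalue $r-m = |R^*|$ has $m(r-m)=1$, and because $r$ is odd the number of parts $\tfrac{r}{m}$ of the complete multipartite graph $G_R$ is never $2$, so $G_R$ is non-bipartite and $m(-(r-m))=0$; hence $m^{+}(r-m)=1$ and $m^{+}(-(r-m))=0$. For the eigenvalue $-m$, note $-m \neq 0, \pm(r-m)$ (the equality $m=r-m$ would force $r=2m$ to be even), so the hypotheses of Proposition \ref{coromult} hold and, using that $m$ is not an eigenvalue of $G_R$, $m^{+}(m)=m^{+}(-m)=\tfrac12\{m(-m)+m(m)\} = \tfrac12 \cdot \tfrac{r-m}{m} = \tfrac{r-m}{2m}$. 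Finally $m^{+}(0)=m(0)=\tfrac{r}{m}(m-1)$ directly from Proposition \ref{Multiplicity}. Assembling these multiplicities yields \eqref{spec gr+}, and the field case $m=1$ is identical and produces \eqref{spec gr+ f}.

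From the resulting spectrum it is then immediate that $G_R^+$ is strongly almost symmetric: every eigenvalue other than $\lambda_0=r-m$ is paired with its opposite of equal multiplicity, whereas $|m^{+}(\lambda_0)-m^{+}(-\lambda_0)| = |1-0| = 1$. The four asserted properties follow quickly. Integrality holds because the listed eigenvalues are integers (equivalently, $G_R$ is integral and integrality of $X(G,S)$ and $X^+(G,S)$ coincide); equienergeticity is Theorem \ref{equienergetic} applied to the abelian symmetric pair $(R,R^*)$; non-isospectrality follows since $m$ (resp.\ $1$) lies in $Spec(G_R^+)$ but not in $Spec(G_R)$, which is exactly the last clause of Proposition \ref{coromult}; and connectedness together with non-bipartiteness is inherited from $G_R$ via Corollary \ref{cxo nobip} (alternatively, from Proposition \ref{conn=nobip}, since $G_R^+$ is strongly almost symmetric).

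The only genuinely delicate point is the case analysis verifying the hypotheses of Proposition \ref{coromult}, namely that the nonzero eigenvalue $-m$ avoids the forbidden values $0$ and $\pm|R^*|$. This is precisely where the oddness of $r$ (equivalently $2\in R^*$, via \eqref{Rodd2R*}) is used, both to exclude $r=2m$ and, through Remark \ref{real chars}, to guarantee that $\chi_0$ is the unique real character so that the multiplicity-halving \eqref{equal m+s} is valid; everything else is bookkeeping with the multiplicities in \eqref{spec GR} and Proposition \ref{Multiplicity}.
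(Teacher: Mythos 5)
Your proposal is correct and follows essentially the same route as the paper: specialize \eqref{spec GR} to $s=1$ to get $Spec(G_R)$, observe that $m\notin Spec(G_R)$ because $r=2m$ is excluded by $r$ odd, and then transfer multiplicities via Proposition \ref{coromult} (with $m^+(0)=m(0)$ from Proposition \ref{Multiplicity}), deducing the remaining properties from Theorem \ref{equienergetic} and Corollary \ref{cxo nobip}. Your explicit treatment of the principal eigenvalue and the loop at $2^{-1}$ are minor elaborations of points the paper handles in its surrounding text, not a different argument.
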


\begin{proof}
Since $R$ is local, we have $R^* = R \smallsetminus \frak m$ and $s=1$ in the decompositions \eqref{artin desc} and 
\eqref{artin units}. Thus, by \eqref{lambdaC} and \eqref{spec GR}, the eigenvalues of $G_R$ are given by
\begin{itemize}
	\item $\lambda_{\varnothing}= |R^*|$ with multiplicity $1$, \msk 
	
	\item $\lambda_{\{1\}} = - \frac{|R^*|}{|R^*|/m}$ with multiplicity $\frac{|R^*|}{m}$ and \msk
	
	\item $0$ with multiplicity $r-(1+ \frac{r-m}{m})$, where $\frac rm \ne 1$ since $\frak m$ is a proper 
	ideal of $R$.
\end{itemize}
  
Hence, we have 
\begin{equation} \label{spec GR local}
Spec(G_R) = \begin{cases}
\{ [r-m]^1, [0]^{\frac rm(m-1)}, [-m]^{\frac rm -1}\} & \qquad \text{if $R$ is not a field } (m>1), \\[2mm]
\{ [r-1]^1, [-1]^{r-1}\} & \qquad \text{if $R$ is a field } (m=1). 
\end{cases}
\end{equation}
This implies that there is some $\chi \in \widehat R$ such that $e_\chi=-m$ and hence 
$$-e_\chi=m\not \in Spec(G_R),$$ 
since $r-m=m$ would imply $r=2m$, but $r$ is odd.
Thus, by \eqref{equal m+s} in Proposition \ref{coromult} we obtain \eqref{spec gr+} and \eqref{spec gr+ f} as desired, from which it follows that the spectrum of $G_R^+$ is strongly almost symmetric.

Finally, the graphs $G_R$ and $G_R^+$ are equienergetic by Theorem \ref{equienergetic}, since $(R,R^*)$ is an abelian symmetric pair, and integral and non-isospectral by \eqref{spec gr+}--\eqref{spec GR local}.
The graph $G_R$ is connected and non-bipartite by \eqref{spec GR local}, since $r\ne 2m$ by hypothesis. 
That the graph $G_R^+$ is connected and non-bipartite follows directly by \eqref{spec gr+} and \eqref{spec gr+ f}, or else by Corollary \ref{cxo nobip}.
\end{proof}

We now illustrate the previous result in the particular case of Galois rings.

\begin{exam}[\textit{Galois rings}] \label{GRings}
Let $p$ be an odd prime and $s, t \in \N$. 
Let $R=GR(p^s,t)$ be the finite Galois ring of $p^{st}$ elements. The ring $R$ is local with maximal ideal $\frak m=(p)$ and hence $m=p^{(s-1)t}$ and $|R^*|=p^{(s-1)t}(p^t-1)$. 
In particular, if $t=1$ then $R$ is the local ring $\Z_{p^s}$ while if $s=1$ we have that $R$ is the finite field $\ff_{p^t}$. Thus,  
\begin{equation*}
\begin{split}
Spec(G_R)   & = \{ [p^{(s-1)t}(p^t-1)]^{1}, [0]^{p^t(p^{(s-1)t}-1)}, [-p^{(s-1)t}]^{p^t-1} \}, \\
Spec(G_{R}^+) & = \{ [p^{(s-1)t}(p^t-1)]^{1}, [p^{(s-1)t}]^{\frac{p^t-1}{2}}, [0]^{p^t(p^{(s-1)t}-1)}, [-p^{(s-1)t}]^{\frac{p^t-1}{2}} \}.
\end{split}
\end{equation*}

Note that in the case that $R$ is a field $0$ is not an eigenvalue since $s=1$.  
If $R=\Z_{p^s}$ we have that $G_{p^s}=X(\Z_{p^s},\Z_{p^s}^*)$ is the complete $p$-partite graph and 
\begin{equation*} \label{spec Zpr} 
\begin{split}	
Spec(G_{p^s}) 	& = \{ [p^s-p^{s-1}]^1, [0]^{p^s-p}, [-p^{s-1}]^{p-1} \}, \\
Spec(G_{p^s}^+) & = \{[p^s-p^{s-1}]^1, [p^{s-1}]^{\frac{p-1}2}, [0]^{p^s-p},  [-p^{s-1}]^{\frac{p-1}2} \}.
\end{split}
\end{equation*}

In all the cases we have that $G_R$ and $G_R^+$ are integral equienergetic non-isospectral connected non-bipartite graphs 
and that $G_R^+$ is strongly almost symmetric. 
\hfill $\lozenge$
\end{exam}

\subsubsection*{Non-local rings} 
We will need the following notations. Given the decomposition \eqref{artin desc} of $R$, 
let $I_s(R) = \{1,2,\ldots,s\}$ and put
	$$I_e(R) := \{i \in I_s(R) : |R_i| \text{ is even}\} \qquad \text{and} \qquad 
	I_{o}(R) := \{i \in I_s(R) : |R_i| \text{ is odd}\}.$$ 
	For simplicity, we will write $I_e$ and $I_o$ when the ring $R$ is understood. We denote by $E(R)$ and $O(R)$ the \textit{even part} and \textit{odd part} of $R$ respectively, where 
	$$E(R) = \prod_{i\in I_{e}} R_{i} \qquad \text{and} \qquad O(R) = \prod_{j \in I_{o}} R_j .$$ 
	Clearly,
	$R \simeq E(R)\times O(R)$.

 We now give some basic structural results on $G_R$ in terms of the even and odd part of $R$.
	
	\begin{lem}\label{lem nonsym}
		Let $R\simeq R_1\times\cdots\times R_s$ be a finite commutative ring with identity with $R_i$ local for $i=1,\ldots,s$. Then we have:
		\begin{enumerate}[$(a)$]
			\item If $O(R)= \{0\}$, then $G_R = G_R^+$. \msk
			
			\item $E(R) \ne \{0\}$ if and only if $G_R^+$ is loopless. \msk
			
			\item If $r_i = 2m_i$ 
			for some $i\in I_{e}$, then $G_R$ and $G_R^+$ are isospectral bipartite graphs. \msk			
			
			\item $G_R$ is non-bipartite if and only if $2m_i<r_i$ for every $i=1,\ldots,s$.
		\end{enumerate} 
	\end{lem}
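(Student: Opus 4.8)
The plan is to reduce every assertion to the local factors $R_i$ via the Kronecker decomposition \eqref{artin units}, namely $G_R = G_{R_1}\otimes\cdots\otimes G_{R_s}$ and $G_R^+ = G_{R_1}^+\otimes\cdots\otimes G_{R_s}^+$, and then to exploit the structure \eqref{GRlocal} of the local pieces together with Lemma \ref{lema GRGR+}. The workhorses will be two elementary facts about the Kronecker (tensor) product of graphs. First, if \emph{one} factor $\Gamma_i$ is bipartite, with proper $2$-colouring $c_i$, then $c(v_1,\dots,v_s):=c_i(v_i)$ is a proper $2$-colouring of the product (any product edge projects to an edge in each factor, so $c_i(u_i)\neq c_i(v_i)$), whence the product is bipartite. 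Second, if \emph{every} factor contains a triangle $\{a_i,b_i,c_i\}$, then the diagonal vertices $(a_1,\dots,a_s)$, $(b_1,\dots,b_s)$, $(c_1,\dots,c_s)$ are pairwise adjacent and pairwise distinct in the product, so the product is non-bipartite. Routing the bipartiteness claims through these two observations lets me sidestep the ``$-\lambda_0$ is an eigenvalue'' test, which is unreliable here since $G_R$ and $G_R^+$ may be disconnected (e.g.\ for $R=\ff_2\times\ff_2$) or $G_R^+$ may carry loops.

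For \emph{(a)}, the hypothesis $O(R)=\{0\}$ says $I_o=\varnothing$, so every $R_i$ has even order; Lemma \ref{lema GRGR+} then gives $G_{R_i}=G_{R_i}^+$ for each $i$, and substituting into \eqref{artin units} yields $G_R=G_R^+$. For \emph{(b)}, I would start from the fact that $G_R^+$ has a loop at $x$ exactly when $2x\in R^*$, and that $2x\in R^*$ iff $2x_i\in R_i^*$ for every $i$. If $E(R)\neq\{0\}$, choose $i\in I_e$; since $|R_i|$ is even, \eqref{Rodd2R*} applied to $R_i$ gives $2\in\frak m_i$, so $2x_i\in\frak m_i$ is never a unit and no $x$ produces a loop, i.e.\ $G_R^+$ is loopless. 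Conversely, if $E(R)=\{0\}$ then $R=O(R)$ has odd order, so $2\in R^*$ by \eqref{Rodd2R*}, and $x=2^{-1}$ gives the loop $2x=1\in R^*$; this establishes the equivalence.

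For \emph{(d)}, the key numerical remark is that $r_i/m_i=|R_i/\frak m_i|$ is the order of the residue field, hence a prime power $\ge 2$, so $r_i\ge 2m_i$ always, with equality precisely when the residue field is $\ff_2$. If $2m_i<r_i$ for every $i$, then by \eqref{GRlocal} each $G_{R_i}$ is either $K_{r_i}$ with $r_i\ge 3$ or a complete $\tfrac{r_i}{m_i}$-partite graph with $\tfrac{r_i}{m_i}\ge 3$ parts; in either case $G_{R_i}$ contains a triangle, so the diagonal-triangle argument produces a triangle in $G_R$ and $G_R$ is non-bipartite. Conversely, if $2m_i=r_i$ for some $i$, then by \eqref{GRlocal} the factor $G_{R_i}\simeq K_{2\times m_i}=K_{m_i,m_i}$ (just $K_2$ when $m_i=1$) is bipartite, and the colouring argument makes $G_R$ bipartite; hence $G_R$ is non-bipartite iff $2m_i<r_i$ for all $i$.

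Finally, for \emph{(c)}, assume $r_i=2m_i$ for some $i\in I_e$. As in (d), $G_{R_i}\simeq K_{m_i,m_i}$ is bipartite, so $G_R$ is bipartite; and because $i\in I_e$ the ring $R_i$ has even order, so Lemma \ref{lema GRGR+} gives $G_{R_i}^+=G_{R_i}$, again bipartite, whence $G_R^+$ is bipartite as well (and loopless, by part (b)). Since bipartite graphs have symmetric spectra, both $Spec(G_R)$ and $Spec(G_R^+)$ are symmetric, and Corollary \ref{isospcond} then yields that $G_R$ and $G_R^+$ are isospectral, as claimed. I expect the main obstacle to sit in parts (c) and (d): because these graphs can be disconnected or carry loops, the spectral bipartiteness criterion cannot be applied verbatim, which is exactly why I prove the bipartiteness statements combinatorially (colouring and diagonal triangle) at the level of the Kronecker product, and only then pass to isospectrality through the symmetric-spectrum criterion of Corollary \ref{isospcond}.
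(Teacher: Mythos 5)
Your proof is correct, and its overall skeleton --- reduce everything to the local factors via \eqref{artin units}, use \eqref{GRlocal} and Lemma \ref{lema GRGR+} for the local pieces, and conclude $(c)$ through Corollary \ref{isospcond} --- is the same as the paper's. Parts $(a)$ and $(c)$ match the paper's argument almost verbatim (you make explicit the step $G_{R_i}^+=G_{R_i}$ for $i\in I_e$, which the paper leaves implicit when it calls $G_{R_i}$ a ``bipartite factor'' of $G_R^+$). The genuine divergences are in $(b)$ and $(d)$. For $(b)$ the paper argues at the level of graphs: $G_E^+\simeq G_E$ is loopless and the Kronecker product of a loopless graph with anything is loopless; you instead characterize loops element-wise ($2x\in R^*$ iff $2x_i\in R_i^*$ for all $i$) and kill them with $2\in\frak m_i$ for $i\in I_e$ --- both are fine, and yours also re-derives the ``loops iff $|R|$ odd'' direction cleanly. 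For $(d)$ the paper is spectral: each $G_{R_i}$ is non-bipartite by \eqref{spec GR local}, so $-|R^*|$ cannot arise as a product of factor eigenvalues, hence $G_R$ is non-bipartite; you replace this with a purely combinatorial argument (a diagonal triangle when every factor has one, a pulled-back $2$-colouring when some factor is bipartite). Your caution about disconnectedness is reasonable but not strictly necessary: for a $k$-regular graph, $-k\notin Spec$ already forces non-bipartiteness even without connectivity, so the paper's eigenvalue test is sound as used; still, your combinatorial route is more elementary and self-contained, at the modest cost of invoking the triangle structure of $K_{r_i}$ and of complete multipartite graphs with at least three parts.
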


	\begin{proof}
$(a)$ If $O(R)=\{0\}$, then every $R_i$ has order a power of 2 and hence  
$G_{R_i} = G_{R_i}^+$ for $i=1,\ldots,s$, by Lemma \ref{lema GRGR+}.
The assertion thus follows directly from \eqref{artin units}.
		
\noindent $(b)$ If $E(R)=\{0\}$, then $|R|$ is odd and then $G_{R}^+$ has loops.
Otherwise, if $E(R)\ne \{0\}$, then $G_E^+ \simeq G_E$ and $G_E$ is loopless.
Since the Kronecker product of a loopless graph with a graph (with or without loops) is loopless, 
we obtain that of $G_R^+\simeq G_E\otimes G_O^{+}$ is loopless, as asserted.

\noindent $(c)$ Suppose that $r_i=2m_i$ for some $i\in I_{e}$. 
Then, $G_{R}$ and $G_{R}^+$ are bipartite graphs, since they are both  Kronecker products with a bipartite factor $G_{R_i}$, 
and therefore $G_R$ and $G_R^+$ have symmetric spectra. The result follows from Corollary \ref{isospcond}.

\noindent $(d)$	Clearly $(c)$ implies that if $G_R$ is non-bipartite then $2m_i<r_i$ for every $i=1,\ldots,s$.
Now assume that $2m_i<r_i$ for every $i=1,\ldots,s$. By \eqref{spec GR local} we have that $G_{R_i}$ is non-bipartite for all $i=1,\ldots,s$.
Thus, $-|R^*|$ cannot be an eigenvalue of $G_R$ since all of its eigenvalues are product of the eigenvalues of the  $G_{R_i}$'s, and hence $G_R$ is non-bipartite, as desired.  
\end{proof}
	
So, for instance, $G_{\Z_4 \times \ff_3}^+$ is a simple bipartite graph while $G_{\ff_4 \times \ff_3}^+$ is simple and non-bipartite.
	
\begin{defi} \label{defi odd}
In previous notations, we will say that a finite commutative ring with identity $R$ is of \textit{odd type} if $m_i < \frac{r_i}2$ for all $i\in I_{e}$ and $O(R)\ne \{0\}$. 
\end{defi}

Notice that if $\mathrm{char}(R)$ is odd, then $R$ is of odd type. Moreover if $R$ is a ring of odd type, then $\mathrm{char}(R)$ is not a power of $2$. Furthermore, for instance, it cannot have factors of the form $\Z_{2^n}$ for $n\in \N$.
For a local ring $R$, to be of odd type and to have odd cardinality are clearly equivalent conditions.

We now give the number of edges $e(G_R)$ and $e(G_R^+)$ of $G_R$ and $G_R^+$, respectively.
\begin{lem} \label{edges}
Let $R$ be a finite commutative ring and put $r=|R|$ and $k=|R^*|$. Then, we have
$e(G_R) = \tfrac 12{kr}$ and $e(G_R^+) = k[\frac{r+1}{2}]$. Also,   
$e(G_R) = e(G_R^+)$ if and only if $r$ is even. 
\end{lem}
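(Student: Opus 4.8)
The plan is to compute the two edge counts separately and then compare. For $G_R=X(R,R^*)$ the argument is immediate: since $0\notin R^*$ the graph is simple (loopless) and $k$-regular on $r$ vertices, so the handshake lemma gives $e(G_R)=\tfrac12 kr$. This is automatically an integer, but one can also see it directly: if $r$ is odd then $-1\ne 1$ in $R$ and the units split into pairs $\{u,-u\}$, forcing $k$ to be even.

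For $G_R^+=X^+(R,R^*)$ the graph is again $k$-regular but may carry loops, so I would first count the loops and then translate the adjacency matrix into an edge count. A loop sits at $x$ exactly when $2x\in R^*$; let $L$ denote the number of such vertices. Writing $R=R_1\times\cdots\times R_s$ as in \eqref{artin desc}, the element $2x=(2x_1,\dots,2x_s)$ is a unit iff each $2x_i$ is a unit in $R_i$. If $|R_i|$ is odd then $2\in R_i^*$, so $2x_i\in R_i^*$ iff $x_i\in R_i^*$, giving $|R_i^*|$ choices; if $|R_i|$ is even then $2\in\frak m_i$, whence $2x_i\in\frak m_i$ is never a unit, giving $0$ choices. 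Multiplying over $i$, we get $L=\prod_i|R_i^*|=k$ when $r$ is odd (equivalently $E(R)=\{0\}$, recovering the count of $|R^*|$ loops already noted in odd characteristic) and $L=0$ as soon as $r$ is even (equivalently $E(R)\ne\{0\}$).

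To count the edges of $G_R^+$, observe that each row sum of its adjacency matrix equals $k$ (for each $s\in R^*$ there is a unique $y$ with $x+y=s$), so the total number of nonzero entries is $rk$. Each of the $L$ loops contributes a single diagonal entry, while every non-loop edge contributes the symmetric pair of off-diagonal entries; hence $rk=2\ell+L$, where $\ell$ is the number of non-loop edges, and counting each loop once as an edge gives $e(G_R^+)=\ell+L=\tfrac12(rk+L)$. Substituting the two cases for $L$ yields $e(G_R^+)=\tfrac12 k(r+1)=k\cdot\frac{r+1}{2}$ when $r$ is odd and $e(G_R^+)=\tfrac12 kr=k\cdot\frac r2$ when $r$ is even; both are captured uniformly by $e(G_R^+)=k\big[\tfrac{r+1}{2}\big]$, with $[\,\cdot\,]$ denoting the integer part. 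Finally, comparing the two formulas, $e(G_R)=\tfrac12 kr$ always, whereas $e(G_R^+)$ equals $\tfrac12 kr$ exactly when $r$ is even and exceeds it by $\tfrac12 k$ when $r$ is odd; therefore $e(G_R)=e(G_R^+)$ if and only if $r$ is even.

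I expect the only delicate point to be the loop count, and in particular handling the even-order local factors, where $2$ falls into the maximal ideal and kills every loop, forcing $L=0$. The second subtlety to pin down explicitly is the loop-counting convention: a loop must contribute one to each vertex degree (consistent with $G_R^+$ being $k$-regular) and be counted once in the edge set, which is what makes $e(G_R^+)=\tfrac12(rk+L)$ an integer and matches the formula $k\big[\tfrac{r+1}{2}\big]$.
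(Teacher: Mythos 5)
Your proof is correct and follows essentially the same route as the paper: handshake for $G_R$, then count the loops of $G_R^+$ ($k$ of them when $r$ is odd, none when $r$ is even) and adjust the edge count accordingly. The only difference is that you re-derive the loop count from the Artin decomposition, whereas the paper simply invokes Lemma~\ref{lem nonsym} and the earlier observation that in odd characteristic $X^+(R,S)$ has loops at exactly $|S|$ vertices; your version is self-contained but proves nothing new.
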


\begin{proof}
Since $G_R$ is $k$-regular of $r$-vertices we have that $e(G_E)=\frac 12 kr$. By Lemma \ref{lem nonsym}, $G_R^+$ has loops if and only if $r$ is odd. If $r$ is even, then $G_R^+$ has no loops and hence $e(G_R^+)=e(G_R)$. If $r$ is odd, then $G_R^+$ has $k$ loops and the number of edges is 
$$e(G_R^+) = k + \tfrac 12 {k(k-1)} + \tfrac 12 {k(r-k)} = k+ \tfrac 12 {k(r-1)} = \tfrac 12{k(r+1)}.$$
Thus, $e(G_R^+)$ is as stated. The remaining assertion is straightforward.  
\end{proof}

We now give a general family of pairs of equienergetic and non-isospectral graphs.
This result generalizes Proposition \ref{Spec GR+}.

\begin{thm} \label{XRR* equinoiso}
Let $R$ be a ring of odd type.  
Then, $G_R= X(R,R^*)$ and $G_R^+= X^+(R,R^*)$ are integral equienergetic non-isospectral connected non-bipartite graphs. 
Moreover, $G_R^+$ is simple if $|R|$ is even while $G_R^+$ is strongly almost symmetric with loops if $|R|$ is odd. 
\end{thm}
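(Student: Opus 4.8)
The plan is to verify the asserted properties in turn, separating the cases $|R|$ odd and $|R|$ even only where they genuinely diverge (essentially only in the non-isospectrality argument). Everything rests on the single observation that $2m_i<r_i$ for every $i\in\{1,\dots,s\}$: for $i\in I_e$ this is the definition of odd type, while for $i\in I_o$ the residue field $R_i/\mathfrak{m}_i$ has odd prime-power order $q_i=r_i/m_i\ge 3$, so $r_i=q_im_i\ge 3m_i>2m_i$. In particular $m_i<|R_i^*|=r_i-m_i$ for all $i$, which I will use repeatedly. Note also that $O(R)\ne\{0\}$ is part of the hypothesis.

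\textbf{The properties that are immediate.}
Equienergeticity follows at once from Theorem \ref{equienergetic} applied to the abelian symmetric pair $(R,R^*)$, and integrality holds because the eigenvalues $\lambda_C$ in \eqref{spec GR}--\eqref{lambdaC} are integers, together with the fact that $X(G,S)$ is integral iff $X^+(G,S)$ is. For non-bipartiteness I would invoke Lemma \ref{lem nonsym}$(d)$, whose hypothesis $2m_i<r_i$ we have just verified; for connectedness I would check that $\lambda_0=|R^*|$ has multiplicity $1$, since every $C\ne\varnothing$ gives $|\lambda_C|=\prod_{j\in C}m_j\prod_{i\notin C}|R_i^*|<\prod_i|R_i^*|=|R^*|$ by $m_j<|R_j^*|$, so no other $\lambda_C$ equals $|R^*|$. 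Corollary \ref{cxo nobip} then transfers connectedness and non-bipartiteness to $G_R^+$. Finally, for the loop statement: if $|R|$ is even then $E(R)\ne\{0\}$ and $G_R^+$ is loopless (simple) by Lemma \ref{lem nonsym}$(b)$, whereas if $|R|$ is odd then $\mathrm{char}(R)$ is odd and, as noted in the text, $G_R^+$ carries exactly $|R^*|$ loops.

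\textbf{The case $|R|$ odd.}
Here Remark \ref{real chars} gives that $\chi_0$ is the only real character, so Proposition \ref{coromult} yields $m^+(e_\chi)=m^+(-e_\chi)$ for all $\chi$ with $e_\chi\ne 0,\pm|S|$. Since $G_R$ is connected and non-bipartite, the only exceptional eigenvalue is $\lambda_0=|R^*|$, with $m^+(\lambda_0)=1$ and $m^+(-\lambda_0)=0$; this is precisely the statement that $G_R^+$ is strongly almost symmetric. Non-isospectrality is then immediate: recall that a strongly almost symmetric graph satisfies $\sum_i\lambda_i=\lambda_0=|R^*|\ne 0$, whereas the loopless graph $G_R$ has $\sum_i\lambda_i=\operatorname{tr}(A)=0$, so the two spectra cannot coincide.

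\textbf{The main obstacle: non-isospectrality when $|R|$ is even.}
Here both graphs are simple and both have vanishing first moment, so the trace argument above fails and this is the delicate point. I would pass to $R\simeq E(R)\times O(R)$, which by \eqref{artin units} gives $G_R=G_E\otimes G_O$ and $G_R^+=G_E\otimes G_O^+$ with $G_E=G_E^+$ by Lemma \ref{lem nonsym}$(a)$, and then compare \emph{third} spectral moments, using $\operatorname{tr}((A\otimes B)^3)=\operatorname{tr}(A^3)\operatorname{tr}(B^3)$. For each local factor, \eqref{spec GR local} gives the uniform identity $\operatorname{tr}(A_{G_{R_i}}^3)=|R_i^*|\,r_i(r_i-2m_i)=|R_i^*|^3-m_i^2|R_i^*|$, which is strictly positive (as $r_i>2m_i$) and strictly below $|R_i^*|^3$; hence $\operatorname{tr}(A_{G_E}^3)=\prod_{i\in I_e}|R_i^*|\,r_i(r_i-2m_i)>0$ and $0<\operatorname{tr}(A_{G_O}^3)=\prod_{j\in I_o}|R_j^*|\,r_j(r_j-2m_j)<\prod_{j\in I_o}|R_j^*|^3$. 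On the other hand, each $G_{R_j}^+$ with $j\in I_o$ is strongly almost symmetric by Proposition \ref{Spec GR+}, so odd powers cancel in pairs and $\operatorname{tr}(A_{G_{R_j}^+}^3)=|R_j^*|^3$, giving $\operatorname{tr}(A_{G_O^+}^3)=\prod_{j\in I_o}|R_j^*|^3$. Multiplying by the common positive factor $\operatorname{tr}(A_{G_E}^3)$ yields $\operatorname{tr}(A_{G_R}^3)<\operatorname{tr}(A_{G_R^+}^3)$, and since $\operatorname{tr}(A^3)$ is a spectral invariant this forces $G_R$ and $G_R^+$ to be non-isospectral. I expect the verification of these trace identities, and in particular the strictness coming from the term $m_j^2|R_j^*|>0$, to be the only computation requiring care.
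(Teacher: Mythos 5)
Your proposal is correct, and the non-isospectrality argument is genuinely different from the paper's. The paper proves non-isospectrality by exhibiting an explicit eigenvalue that lies in one spectrum but not the other: writing $R=E\times O$ and $O=F\times L$, it shows that $\rho_E\lambda_C\mu_D=\pm\prod_i m_i$ (the product over \emph{all} local factors) is an eigenvalue of $G_R$ whose negative is not, because $m_i<|R_i^*|$ forces this to be the unique nonzero eigenvalue of minimal modulus; Proposition \ref{coromult} then transfers $-\rho_E\lambda_C\mu_D$ into $Spec(G_R^+)$. You instead compare spectral moments: in the odd case the first moment already separates the graphs ($\operatorname{tr}A_{G_R}=0$ versus $\operatorname{tr}A_{G_R^+}=|R^*|$ from the loops — this is simpler and more direct than the paper's argument there), and in the even case, where first and second moments agree, you pass to third moments and use the clean identities $\operatorname{tr}(A_{G_{R_i}}^3)=|R_i^*|^3-m_i^2|R_i^*|$ and $\operatorname{tr}(A_{G_{R_j}^+}^3)=|R_j^*|^3$ (the latter from the strong almost-symmetry of $Spec(G_{R_j}^+)$ for $j\in I_o$), which I have checked against \eqref{spec GR local} and \eqref{spec gr+}--\eqref{spec gr+ f}; the strictness comes from $m_j^2|R_j^*|>0$ on the nonempty set $I_o$ and the positivity of $\operatorname{tr}(A_{G_E}^3)$ guaranteed by $r_i>2m_i$. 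What the paper's route buys is the precise location of the spectral discrepancy (a specific eigenvalue $-\rho_E\lambda_C\mu_D$ of $G_R^+$ missing from $G_R$), which it reuses elsewhere; what your route buys is that you never have to rule out accidental coincidences among the various $\lambda_C$ — a point the paper handles somewhat tersely — since a single numerical invariant settles the matter. The remaining parts (equienergeticity via Theorem \ref{equienergetic}, integrality, connectedness via $m(|R^*|)=1$, non-bipartiteness via Lemma \ref{lem nonsym}$(d)$, and the loop/strong-almost-symmetry statements) follow the paper's lines and are correctly justified.
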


\begin{proof}
By Theorem \ref{equienergetic}, the graphs $G_R$ and $G_R^+$ are equienergetic.
We will next show that they are non-isospectral. 

Assume first that $|R|$ is odd. Hence, the only real character of $R$ is the trivial one (Remark~\ref{real chars}). Thus, by Proposition \ref{coromult}, to show that 
$G_R$ and $G_R^+$ are non-isospectral it is enough to show that there is some non-trivial irreducible 
character $\chi$ of $R$ such that $-e_\chi$ is not an eigenvalue of $G_R$.

Let $R = R_1 \times \cdots \times R_s$ be the Artin's decomposition of $R$ in local rings as in \eqref{artin desc}, 
with $|R_i|=r_i$ and $|\frak{m}_i| = m_i$ for $i=1,\ldots,s$.

Suppose first that $R$ is reduced, that is $R$ has no non-trivial nilpotent elements. In this case, the rings $R_i$ 
are all fields (hence $m_i=1$) for all $i$. 
In this way, by \eqref{lambdaC}, the eigenvalues of $G_R$ have the form 
\begin{equation*} \label{eig C}
\lambda_{C}= (-1)^{|C|} \prod_{j \notin C} (r_j-1),
\end{equation*}
where $C \subseteq \{1,\ldots,s\}$. 
Notice that if $C =\{1,\ldots,s\}$, then 
$$\lambda_{C} = (-1)^{s}$$ 
is an eigenvalue of $G_R$ while $-\lambda_{C} = (-1)^{s+1}$ is not. In fact, $r_i > 2$ for every $i=1,\ldots,s$, 
since  $|R|$ is odd. 
This implies that there is some non-trivial character 
$\chi \in \widehat R$ such that 
$$-e_\chi \notin Spec(G_R).$$ 
By Proposition \ref{coromult}, we have that $G_R$ and $G_R^+$ are non-isospectral.

Now, suppose that none of the rings $R_i$ are fields, hence $m_i >1$ for every $i=1,\ldots,s$. Then,
the eigenvalues of $R$ are $0$ and $\lambda_C$ as given in \eqref{lambdaC}
and \eqref{spec GR}, that in this case we denote by $\mu_D$ with $D\subseteq \{1,\ldots,s\}$. 
Taking $D=\{1,\ldots,s\}$ we have that 
\begin{equation*} \label{eig D}
\mu_{D} = (-1)^{s} 
m_1 \cdots m_s
\end{equation*}
is an eigenvalue of $G_R$ while $-\mu_D$ is not. 
Proceeding as before, this implies that $G_R$ and $G_R^+$ are non-isospectral.
	
In the general case, we can always write $R = F \times L$ 
where $F=F_{1}\times\cdots\times F_{s}$ is a reduced ring with $F_i$ a field for all $i=1,\ldots,s$ and 
$L=L_{1} \times \cdots \times L_{t}$ where each $L_{i}$ is a local ring which is not a field, for all $j=1,\ldots,t$. 
Hence, $R^*=F^*\times L^*$ and 
$$G_R = G_F \otimes G_L.$$ 
If $C=\{1,\ldots,s\}$ and $D=\{1,\ldots,t\}$, we can take the eigenvalues $\lambda_{C}$ of $G_F$ and $\mu_{D}$ of $G_L$ as before. 
Since it is known that the eigenvalues of the Kronecker product is the product of the eigenvalues of its factors, we have that 
$$\lambda_{C} \mu_{D} = (-1)^{s+t} m_1 \cdots m_t$$ 
is an eigenvalue of $G_R$ while $-\lambda_{C} \mu_{D}$ is not. 
Therefore, $G_R$ and $G_R^+$ are non-isospectral.

Now assume we are in the general case, i.e.\@ $R$ is a commutative ring of odd type and let $E=E(R)$ and $O=O(R)$. 
By Lemma \ref{lem nonsym}, we have that  $G_E\simeq G_E^+$, 
and in particular $G_E$ and $G_E^+$ have the same spectra. 
On the other hand, 
$O(R)$ is a nontrivial ring of odd cardinality.  
By putting $O(R)=F \times L$ and defining $C,D$ as above, we can take 
$\lambda_C$ and $\mu_D$ associated to $C$ and $D$, 
so that $\lambda_C\mu_D$ is an eigenvalue of $O(R)$ but $-\lambda_C\mu_D$ is not. 

Recall that the eigenvalues of $G_E$ are given by 
$$\eta_W= (-1)^{|W|} \prod_{i\in W} m_i \prod_{j\in I_e\smallsetminus W}|R_{j}^*|,$$ 
where $W\subseteq I_e$.
Since $2m_i< r_i$ for all $i\in I_{e}$, 
then $m_i < |R_{i}^*|$ for all $i\in I_{e}$. 
Thus if $W_1\subseteq W_2$ are different subsets of $I_e$ then $|\eta_{W_1}|> |\eta_{W_2}|$.
In particular, if $W$ is a proper subset of $I_e$ then $|\eta_W|>|\rho_E|$ where 
$\rho_E:= \eta_{I_e}$.
This implies that 
$$\rho_E=(-1)^{|I_{e}|} \prod_{i\in I_{e}}m_i \in Spec(G_E) \qquad \text{and} \qquad 
-\rho_E\not \in Spec(G_E) .$$
On the other hand, notice that the decomposition $R=E\times O$  induces the decompositions $G_R\simeq G_E\otimes G_O$ and
$G_R^+\simeq G_E^+ \otimes G_O^+$, by \eqref{artin units}. Since $G_E\simeq G_E^+$, we obtain
$$G_R^+ \simeq G_E \otimes G_O ^+.$$
So, we have that $-\rho_E\lambda_C \mu_D\in Spec(G_R^+)$ and $-\rho_E\lambda_C \mu_D \not \in Spec(G_R)$. 
Therefore $G_R$ and $G_R^+$ are non-isospectral, as desired.

We now prove that $G_R$ and $G_R^+$ are both connected and non-bipartite. By \eqref{artin units}, the eigenvalues of $G_R$ are all the products of the eigenvalues of the $G_{R_i}$'s. Since each $G_{R_i}$ is connected by Proposition \ref{Spec GR+}, then $G_R$ is connected. Also, $G_R$ is non-bipartite since $R$ is of odd type, by ($d$) in Lemma \ref{lem nonsym}. By Corollary \eqref{cxo nobip}, $G_R^+$ is also connected and non-bipartite.

Finally, the integrality of $Spec(G_R)$ is known and this clearly implies the integrality of $Spec(G_R^+)$.
The last assertion follows directly from Proposition \ref{coromult} and $(c)$ of Lemma \ref{lem nonsym}.
\end{proof}

\begin{exam}
If $R=\ff_{2^n} \times \ff_q$, with $n\ge 2$ (see Lemma \ref{lem nonsym}) and $q$ odd, then $\{G_R,G_R^+\}$ is a pair of integral equienergetic non-isospectral graphs without loops having the same number of edges. The smallest such graph is $4$-regular with 12 vertices, corresponding to $R=\ff_4 \times \ff_3$. \hfill $\lozenge$
\end{exam}

\subsubsection*{Strongly regular graphs}
We now study strongly regular graphs. 
A $k$-regular graph with $n$ vertices is a strongly regular graph with parameters $srg(n,k,e,d)$ if
every two adjacent vertices have $e$ common neighbours and every two non-adjacent vertices have $d$ common neighbours.
Unitary Cayley graphs which are strongly regular are classified in \cite[Corollary 16]{AA} while  
unitary Cayley sum graphs $G_R^+$ without loops which are strongly regular were recently classified in \cite[Theorem~4.5]{RAR}. 
In both cases, the proofs are algebraic. 
As an application, we now give a simple unified spectral proof of these facts.

\begin{prop}[\cite{AA, RAR}] \label{srg GR}
	Let $R$ be a finite commutative ring with identity. 
	\begin{enumerate}[$(a)$]
		\item $G_R$ is a strongly regular graph if and only if $R$ is local, $R=\ff_q \times \ff_{q}$ with $q\ge 3$ or else $R=\Z_2^n$ for $n\ge 2$, in which case we have  
		\begin{align*}
		& G_{\ff_q}=srg(q,q-1,q-2,0), & &G_L = srg(r,r-m, r-2m, r-m), \\ 
		& G_{\ff_q \times \ff_q}= srg(q^2, (q-1)^2, (q-2)^2, (q-1)(q-2)), & &G_{\Z_2^n}=srg(2n,1,0,0),
		\end{align*}
	\text{ with $L$ a local ring which is not a field}. \msk
		
		\item $G_R^+$ is a strongly regular graph if and only if $R$ is local with \emph{char}$(R/\frak m) = 2$ or $R=\ff_{2^n} \times \ff_{2^n}$ or else $R=\Z_2^n$ for $n\ge 2$.
	\end{enumerate}
\end{prop}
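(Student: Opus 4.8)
The plan is to reduce everything to the elementary spectral characterization of strongly regular graphs: a connected $k$-regular graph is strongly regular if and only if its adjacency matrix has exactly three distinct eigenvalues, while a disconnected strongly regular graph is precisely a disjoint union of at least two complete graphs of equal order (two distinct eigenvalues, as also for a single complete graph). Since the spectra of $G_R$ and $G_R^+$ are completely known --- $G_R$ from \eqref{spec GR} and \eqref{lambdaC}, and $G_R^+$ from Proposition~\ref{Spec GR+} and Theorem~\ref{XRR* equinoiso} --- together with the connectivity and bipartiteness information in Lemma~\ref{lem nonsym}, the whole statement becomes a matter of counting distinct eigenvalues and, in the affirmative cases, reading off $e$ and $d$ from the defining identity $A^2=kI+eA+d(J-I-A)$. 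Throughout I would exploit the product decomposition \eqref{artin units}, so that the eigenvalues of $G_R$ (resp.\ $G_R^+$) are the products of the eigenvalues of the local factors $G_{R_i}$ (resp.\ $G_{R_i}^+$).

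For part $(a)$ the local case is immediate: by \eqref{spec GR local} the graph $G_R$ has at most three distinct eigenvalues, hence is strongly regular, and the parameters of $K_r$ and $K_{\frac rm\times m}$ are read off directly. For a non-local $R=R_1\times\cdots\times R_s$ with $s\ge 2$ I would first dispose of the non-bipartite case (all $2m_i<r_i$) using the strict monotonicity $|\lambda_{C_1}|>|\lambda_{C_2}|$ whenever $C_1\subsetneq C_2$, already established in the proof of Theorem~\ref{XRR* equinoiso}. Applied to a chain $\varnothing\subset\{1\}\subset\{1,2\}\subset\{1,2,3\}$ this produces four eigenvalues of strictly decreasing modulus and alternating sign, so $s\ge 3$ forces at least four distinct eigenvalues and is excluded. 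For $s=2$ one computes $\lambda_\varnothing,\lambda_{\{1\}},\lambda_{\{2\}},\lambda_{\{1,2\}}$ and the multiplicity of $0$ from \eqref{spec GR}; since $\lambda_\varnothing$ and $\lambda_{\{1,2\}}$ are always two distinct positive values, having only three distinct eigenvalues forces $\lambda_{\{1\}}=\lambda_{\{2\}}$ and no zero eigenvalue, which in turn forces $m_1=m_2=1$ and $r_1=r_2$, i.e.\ $R=\ff_q\times\ff_q$ with $q\ge 3$. The remaining bipartite subcase (some $r_i=2m_i$) I would treat via Lemma~\ref{lem nonsym}$(c)$: such $G_R$ is bipartite, so if strongly regular it is either a complete bipartite graph (connected) or a disjoint union of cliques (disconnected), and tracing the product spectrum shows the only surviving possibility is $r_i=2$, $m_i=1$ for all $i$, that is $R=\Z_2^n$, whose graph is the perfect matching $2^{n-1}K_2$ on $2^n$ vertices.

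For part $(b)$ I would recall that the classification concerns \emph{loopless} $G_R^+$, i.e.\ $E(R)\ne\{0\}$ by Lemma~\ref{lem nonsym}$(b)$. If moreover $O(R)=\{0\}$, then every factor has even order and $G_R^+=G_R$ by Lemma~\ref{lem nonsym}$(a)$, so part $(a)$ applied to an all-even ring yields exactly the three families: $R$ local with $\mathrm{char}(R/\frak m)=2$, $R=\ff_{2^n}\times\ff_{2^n}$, and $R=\Z_2^n$. If instead $O(R)\ne\{0\}$, write $R=E\times O$ with $E=E(R)$, $O=O(R)$, so that $G_R^+\simeq G_E\otimes G_O^+$. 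Here $O$ is a nontrivial odd ring, hence $G_O^+$ is strongly almost symmetric by Theorem~\ref{XRR* equinoiso} and its spectrum contains the degree $|O^*|$ (multiplicity one) together with a genuine sign-pair $\pm\mu$ with $0<\mu<|O^*|$. Multiplying by the degree $|E^*|$ of $G_E$ already yields the three distinct eigenvalues $|E^*||O^*|>|E^*|\mu>0>-|E^*|\mu$, and a negative eigenvalue of $G_E$ contributes further negative eigenvalues; a short case check (according as $G_E$ is bipartite or not) shows the total is at least four, so $G_R^+$ is not strongly regular.

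The step I expect to be the main obstacle is ruling out accidental coincidences of eigenvalues in the product spectra --- precisely the bipartite non-local subcase of $(a)$ and the mixed even--odd case of $(b)$. When all factors are non-bipartite the monotonicity of $|\lambda_C|$ makes the count transparent, but once bipartite factors contribute a zero eigenvalue and the $\pm$ symmetrization of the Cayley sum graph creates sign cancellations, I would have to verify by hand that no collision collapses the count to three (for instance that the fourth eigenvalue found above never merges with $-|E^*|\mu$). This bookkeeping, rather than any conceptual difficulty, is where the care lies.
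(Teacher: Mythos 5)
Your proposal is correct and rests on the same strategy as the paper's proof: identify strongly regular graphs spectrally (connected with three distinct eigenvalues, or the exceptional $mK_a$), feed in the known spectra \eqref{spec GR local}, \eqref{spec GR} and the Kronecker decomposition \eqref{artin units}, and count distinct eigenvalues. The difference is in how the converse of $(a)$ is organized. The paper first invokes $\mu(\G_1\otimes\G_2)\ge\max\{\mu(\G_1),\mu(\G_2)\}$ (equation \eqref{mues}) together with the claim that two \emph{different} local factors, or any non-field factor, already force $\mu\ge 4$; this pins $R$ down to $\ff_q^{\,n}$ in one stroke and then excludes $n\ge 3$ by a direct computation. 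You instead split on bipartiteness: in the non-bipartite case the strict monotonicity of $|\lambda_C|$ along chains of subsets (borrowed from the proof of Theorem~\ref{XRR* equinoiso}) kills $s\ge 3$ and reduces $s=2$ to $\ff_q\times\ff_q$ by forcing $\lambda_{\{1\}}=\lambda_{\{2\}}$ and no zero eigenvalue; in the bipartite case you use the classification of bipartite strongly regular graphs as $K_{a,a}$ or $mK_2$. Your route avoids the paper's ``direct calculation'' claims (one of which, that two different local factors always give $\mu\ge 4$, is actually false as stated, e.g.\ for $\Z_4\times\ff_2$ where $\mu=3$ but the graph is disconnected and still not srg), at the price of the extra bookkeeping you flag in the bipartite subcase; that bookkeeping does close as you expect. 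For $(b)$ both proofs reduce the all-even case to $(a)$ via Lemma~\ref{lem nonsym}, and your treatment of the mixed even--odd case (producing a fourth distinct eigenvalue from a negative eigenvalue of $G_E$ and the sign-pair $\pm\mu$ of $G_O^+$) is in fact more explicit than the paper's one-line assertion $\mu(G_R^+)>\mu(G_R)\ge 3$.
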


\begin{proof}
It is well-known that strongly regular graphs (srg) correspond to connected simple regular graphs having exactly 3 different eigenvalues with only one exception, the graph $mK_a$ (disjoint union of $m$ complete graphs $K_a$) with $m\ge 1$. 
This is the unique strongly regular graph with 2 eigenvalues, and it is disconnected when $m\ge2$. 

($a$) Suppose first that $(R,\frak m)$ is local.
By \eqref{GRlocal}, we have that $G_R=K_{r}$ or $G_R=K_{\frac{r}{m}\times m}$ with $m=|\frak m|$ depending whether $R$ is a field or not, respectively. 
By \eqref{spec GR local} and the previous comment, $G_R$ is a strongly regular graph. Moreover,
it is known that $K_r=srg(r,r-1,r-2,0)$ and $K_{m \times a}=srg(ma, (m-1)a, (m-2)a, (m-1)a)$ with $m,a>1$.
In particular, we have $K_{\frac{r}{m}\times m}=srg(r,r-m, r-2m, r-m)$.

Now, if $R=\ff_q\times \ff_q$, then $G_R$ is connected with spectrum 
\begin{equation} \label{spec Gfqfq}
Spec(G_{\ff_q \times \ff_q}) = \{ [(q-1)^2]^1, [1]^{(q-1)^2}, [-(q-1)]^{2(q-1)} \}
\end{equation}
by \eqref{spec GR local}. Hence, it is strongly regular with parameters 
$$G_{\ff_q \times \ff_q} = K_q \otimes K_q =  srg(q^2, (q-1)^2, (q-2)^2, (q-1)(q-2)).$$
On the other hand, if $R=\Z_2^n$, then $G_R=nK_2$. Since $mK_a = srg(ma, a-1,a-2,0)$ with $m,a>1$ we have that 
$G_{\Z_2^n}=srg(2n,1,0,0)$.

We now prove that these are the only possibilities. We thus assume that $G_R$ is a strongly regular graph with $R$ non-local.
If $\mu(\G)$ denotes the number of different eigenvalues of $\G$
then 
\begin{equation}\label{mues}
\mu(\G_1\otimes\G_2)\ge \max\{\mu(\G_1),\mu(\G_2)\}.
\end{equation}
The eigenvalues of $G_R$ are the product of the eigenvalues of $G_{R_1}$ and $G_{R_2}$.
By direct calculation, 
if $R_1,R_2$ are different local rings then $\mu(G_{R_1\times R_2})\ge 4$, by \eqref{spec GR local}.
So, if $G_R$ is a strongly regular graph and $R=R_1\times\cdots\times R_n$ with $n\ge 2$ 
then $R_i=R_j$ for all $i,j=1,\ldots,n$, since $\mu(G_R)\le 3$.
On the other hand, notice that if  $S_1,S_2$ are local rings such that 
some $S_i$ is not a field then $\mu(G_{S_1\times S_2})\ge 4$. 
Therefore, if $G_R$ is strongly regular then $R=\ff_{q}^n$ with $\ff_q$ a finite field.
By direct calculation, if $q>2$ then $\mu(G_{\ff_q^{3}})=4$, by \eqref{mues}, and 
$\mu(G_{\ff_q^{n}}) \ge 4$ if $n > 3$. Therefore, $n=2$ in this case.

$(b)$ If $G_R^+$ is strongly regular then it is simple, and this can only happen if $E(R) \ne \{0\}$ by ($b$) in 
Lemma \ref{lem nonsym}, and hence $|R|$ must be even. If $O(R) =\{0\}$ then $G_R^+=G_R$ by 
Lemma \ref{lem nonsym} and the result follows from ($a$). In general, if $O(R) \ne \{0\}$, by Lemma \ref{lem nonsym} and Theorem \ref{XRR* equinoiso} we have that 
$G_R^+$ has loops if $E(R)=0$ or else 
$\mu(G_R^+)> \mu(G_R)\ge 3$ if $E(R)\ne 0$, and hence we are in the cases in ($a$) with $r$ even.
\end{proof}

\section{Energy and complementary graphs} 
Here we address the computation of the energies of the graphs studied in the previous sections as well as of their complements. 
We recall that the energy of a graph $\G$ with eigenvalues $\{\lambda_1, \ldots, \lambda_n \}$ is given by 
	\begin{equation} \label{energy}
	E(\Gamma) = \sum_{1\le i \le n} |\lambda_i|.
	\end{equation} 
Furthermore, we are interested in determining which of these Cayley graphs are equienergetic with their own complements (in the non self-complementary case). 
We will find some of them and, as a consequence, we will exhibit triples $\{G_R, G_R^+, \bar G_R\} $ of equienergetic non-isospectral unitary Cayley graphs.

We recall that if $\G$ is a $k$-regular graph of $n$ vertices with non-principal eigenvalues $\{ \lambda \}$ the complementary graph $\bar \Gamma$ is an $(n-k-1)$-regular graph with non-principal eigenvalues $\{ -1-\lambda \}$. Also, 
the complement of a Cayley graph $\Gamma = X(G,S)$ is the Cayley graph $\bar \Gamma = X(G,S^c \smallsetminus\{0\})$.

The energy of a unitary Cayley graph $G_R=X(R,R^*)$ 
and that of its complement $\bar G_R$ are  already known. They were computed by Kiani et al in 2009 (\cite{Ki+}, see also \cite{Ak+}). 
If $R=R_1 \times \cdots \times R_s$ is as in \eqref{artin desc} then 
\begin{equation} \label{energy GRs}
\begin{split}
E(G_R) &= 2^s |R^*|, \\
E(\bar G_R) &= 2(|R|-1) + (2^s-2) |R^*| - \prod_{i=1}^s \tfrac{r_i}{m_i} + \prod_{i=1}^s (2- \tfrac{r_i}{m_i}),
\end{split}
\end{equation}
where $r_i=|R_i|$ and $m_i=|\frak m_i|$.
In this way, the condition $E(G_R) = E(\bar{G}_R)$ for equienergy between the graph and its complement is, by \eqref{energy GRs}, given by
\begin{equation} \label{equien G bar G}
2(|R|-|R^*|-1)= \prod_{i=1}^s \tfrac{r_i}{m_i} - \prod_{i=1}^s (2- \tfrac{r_i}{m_i}).
\end{equation}

We now classify all unitary Cayley graphs $G_R$ which are equienergetic with their complements,
for $R$ a finite commutative ring with only one or two factors in the artinian decomposition, i.e.\@ for $s=1,2$ in 
\eqref{artin desc}, or $R$ a product of at most 3 finite fields. 

\begin{thm} \label{s12}
Let $R$ be a finite commutative ring with unity. 
\begin{enumerate}[$(a)$]
	\item If $R$ is local with maximal ideal $\frak m$ then $E(G_R) = E(\bar G_R)$ if and only if 
		$|R|=m^2$ where $m=|\frak m|$ is a prime power (i.e.\@ $R$ is not a field). \sk
		
	\item If $R=R_1 \times R_2$, then $E(G_R) = E(\bar G_R)$ if and only if $R_1=\ff_q$ and $R_2=\ff_{q'}$ are both finite fields (not necessarily distinct). \sk 
	
	\item If $R=\ff_{q_1}\times \ff_{q_2} \times \ff_{q_3}$ is the product of 3 finite fields 
	with $q_1\le q_2\le q_3$, then $E(G_R) = E(\bar G_R)$ if and only if $(q_1,q_2,q_3)\in \{(3,5,5),(4,4,4)\}$.
\end{enumerate} 
Furthermore, all the equienergetic pairs $\{G_R, \bar G_R\}$ given by $(a)$, $(b)$ and $(c)$ are non-isospectral, with  the only exception of $R=\ff_3 \times \ff_3$.
\end{thm}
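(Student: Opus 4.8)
The plan is to extract everything from the single scalar identity \eqref{equien G bar G}, which is precisely the condition $E(G_R)=E(\bar G_R)$, and to specialize it in each case to a Diophantine condition in the invariants $r_i=|R_i|$, $m_i=|\frak m_i|$ and the residue field sizes $a_i=r_i/m_i\ge 2$. For $(a)$ I would take $s=1$ and $|R^*|=r-m$; then \eqref{equien G bar G} reads $2(m-1)=\tfrac{2r}{m}-2$, which simplifies at once to $r=m^2$. Since $R$ is local, $m=|\frak m|$ is automatically a prime power, and $m>1$ is forced because $R$ is not a field. This case is immediate.

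For $(b)$ I would set $s=2$ and write $r_i=m_ia_i$. The right-hand side of \eqref{equien G bar G} simplifies to $2(a_1+a_2)-4$, while $|R|-|R^*|=m_1m_2(a_1+a_2-1)$ makes the left-hand side $2\big(m_1m_2(a_1+a_2-1)-1\big)$; after cancellation the identity becomes the clean factorization $(m_1m_2-1)(a_1+a_2-1)=0$. Since $a_1+a_2-1\ge 3$, this forces $m_1m_2=1$, i.e.\ $m_1=m_2=1$, so both factors are fields. I expect this to be the shortest of the three arguments.

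Part $(c)$ is the heart of the matter. Setting $m_i=1$ and $r_i=q_i$ and expanding both sides of \eqref{equien G bar G} in the elementary symmetric functions $e_1,e_2,e_3$ of $q_1,q_2,q_3$, the left-hand side collapses to $2(e_2-e_1)$ and the right-hand side to $2e_3+4e_1-2e_2-8$; equating and simplifying gives $e_3-2e_2+3e_1=4$, which I would recast in the symmetric product form
\begin{equation*}
(q_1-2)(q_2-2)(q_3-2)=q_1+q_2+q_3-4.
\end{equation*}
Substituting $u_i=q_i-2\ge 0$ turns this into $u_1u_2u_3=u_1+u_2+u_3+2$. No $u_i$ can vanish (otherwise the left side is $0$ while the right side is positive), so every $q_i\ge 3$; then, ordering $u_1\le u_2\le u_3$, the bound $u_1u_2u_3\le 3u_3+2$ yields $u_1u_2\le 5$, hence $u_1\le 2$, leaving only finitely many cases. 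A direct check over the prime-power values compatible with this bound then isolates the admissible triples. I expect this final enumeration to be the main obstacle: one must test every prime-power assignment permitted by the bound rather than halting at the first solutions found.

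Finally, for the non-isospectrality claim I would write the spectra explicitly. In each case $Spec(G_R)$ is given by \eqref{spec GR}--\eqref{spec GR local} (as a Kronecker product of the local spectra in the product cases), and $Spec(\bar G_R)$ follows from the complement rule, its principal eigenvalue being $|R|-|R^*|-1$ and each non-principal eigenvalue $\lambda$ becoming $-1-\lambda$. Comparing the two multisets for each pair produced in $(a)$--$(c)$, they coincide only when $G_R$ is isospectral to its own complement; a parameter check shows this happens exactly for $R=\ff_3\times\ff_3$, where $G_R=srg(9,4,1,2)$ and $\bar G_R$ shares the same strongly regular parameters. In every other case the two spectra differ, so the pair is non-isospectral.
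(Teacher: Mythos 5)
Your reductions in parts $(a)$ and $(b)$ are essentially the paper's own: the local case collapses \eqref{equien G bar G} to $r=m^2$ exactly as you describe, and in the two-factor case the paper reaches the same cancellation you do (it writes it as $(2m_1m_2-1)(q_1+q_2-1)=q_1+q_2-1$, which is your $(m_1m_2-1)(a_1+a_2-1)=0$; your bound $a_1+a_2-1\ge 3$ is the paper's cancellation step). The non-isospectrality discussion also follows the paper: comparison of regularity degrees settles every case except a product of two fields, where equality of degrees is shown to force $(q_1,q_2)=(3,3)$. In part $(c)$ your equation $u_1u_2u_3=u_1+u_2+u_3+2$ with $u_i=q_i-2$ is an exact repackaging of the paper's unit-fraction equation $\tfrac{1}{q_1-1}+\tfrac{1}{q_2-1}+\tfrac{1}{q_3-1}=1$, so up to this point the two arguments coincide.

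The difficulty is in the final enumeration of $(c)$, and it is not a defect of your method but of the statement you are asked to prove. Your equation has three ordered solutions in positive integers, $(u_1,u_2,u_3)=(1,2,5),(1,3,3),(2,2,2)$, i.e.\ $(q_1,q_2,q_3)=(3,4,7),(3,5,5),(4,4,4)$, and all entries of all three are prime powers. One checks directly that $R=\ff_3\times\ff_4\times\ff_7$ does satisfy $E(G_R)=E(\bar G_R)$: by \eqref{energy GRs} both energies equal $2^3\cdot 2\cdot 3\cdot 6=288$. So the ``direct check over prime-power values'' cannot isolate only $\{(3,5,5),(4,4,4)\}$; the theorem as printed omits $(3,4,7)$. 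The paper's own proof loses this case by asserting that $x_1=3$, $x_2=4$ makes $x_3$ non-integral, whereas $\tfrac12-\tfrac13=\tfrac16$ gives $x_3=7$. Your closing caveat about testing every assignment permitted by the bound, rather than halting at the first solutions found, is precisely where this surfaces: carried out honestly, your argument proves a corrected version of $(c)$ with three admissible triples (each non-isospectral to its complement, since the degrees $36$ and $47$ differ for the new one), not the version in the statement.
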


\begin{proof}
In all the cases, we first determine the equienergetic pairs and then discard the isospectral ones, if any.

\sk
($a$) Let $r=|R|$ and $m=|\frak m|$.
Notice that $|R^*|=r-m$ in this case. By \eqref{equien G bar G}, $G_R$ and $\bar G_R$ are equienergetic if and only if 
$$2(m-1) = \tfrac{r}{m}-(2-\tfrac{r}{m}) = 2(\tfrac rm -1),$$
which holds if and only if $r=m^2$. 
Since $R/\frak m$ is a field, $r$ and $m$ are prime powers. 

Also, $G_R$ has regularity degree $\kappa=|R^*|=r-m=m(m-1)$ while $\bar G_R$ has regularity degree $r-\kappa-1= m-1$, and hence $G_R$ and $\bar G_R$ are not isospectral to each other.

\sk
($b$) Now, assume that $R=R_1\times R_2$. 
Let $\frak m_1, \frak m_2$ be the maximal ideals of $R_1$ and $R_2$, respectively, and 
put $r_i=|R_i|$, $m_i=|\frak m_i|$ and $q_i=\frac{r_i}{m_i}$ for $i=1,2$. Thus, \eqref{equien G bar G} takes the form 
$$2(r_1r_2 - (r_1-m_1)(r_2-m_2)-1) = \tfrac{r_1 r_2}{m_1 m_2}-(2-\tfrac{r_1}{m_1})(2-\tfrac{r_2}{m_2}).$$
Thus, taking into account that $r_i=q_i m_i$ we have
$$2(m_1m_2q_1 q_2- m_1m_2(q_1-1)(q_2-1))-2= q_1 q_2- (2-q_1)(2-q_2).$$
Notice that if we write $2-q_i=1-(q_i-1)$ for $i=1,2$, then we have that  
$$(2-q_1)(2-q_2)=1-(q_1-1)-(q_2-1)+(q_1-1)(q_2-1)$$ 
and hence $(2m_1m_2-1)(q_1q_2-(q_1-1)(q_2-1))=1+(q_1-1)+(q_2-1)$.
In this way, we get 
$$(2m_1m_2-1)(q_1+q_2-1)=q_1+q_2-1.$$
Since $q_i\ge 1$ for $i=1,2$, by cancellation we obtain $2m_1m_2-1=1$
which clearly holds if and only if $m_1=m_2=1$. Therefore $R_1$ and $R_2$ are finite fields, as asserted.

Regarding the isospectrality, we first discard 	$R=\ff_3 \times \ff_3$. In fact,  
since $G_{\ff_3\times \ff_3}=K_{3}\otimes K_3$ (or by \eqref{spec Gfqfq}) we have 
$$\mathrm{Spec}(G_{\ff_3\times \ff_3})=\{[4]^1,[2]^4,[-1]^4\}=\mathrm{Spec}(\bar{G}_{\ff_3\times \ff_3}).$$
Now, suppose that $G_{\ff_{q_1}\times \ff_{q_2}}$ and $\bar{G}_{\ff_{q_1}\times \ff_{q_2}}$ are isospectral with $(q_1,q_2)\ne(3,3)$.
In particular, they have the same regularity degree, and hence we have $q_1q_2-1 = 2(q_1-1)(q_2-1)$. From this, we get
\begin{equation}\label{q1q2}
1 + \tfrac{3}{q_1 q_2} = 2 (\tfrac{1}{q_1} + \tfrac{1}{q_2} ).
\end{equation} 
Observe that the left side of the above equality is greater than $1$, so necessarily $\frac{1}{q_1} +\frac{1}{q_2}>\frac 12$. 
By taking into account that $q_1\le q_2$, we obtain that $q_1\le 3$. 
If $q_1=2$, then the equality \eqref{q1q2} takes the form $1+\frac{3}{2q_2}=1+\frac{2}{q_2}$ which 
has no solutions. 
If $q_1=3$, by \eqref{q1q2} we have that $1+\frac{1}{q_2}=\frac{2}{3}+\frac{2}{q_2}$ and thus $q_2=3$, but this is a contradiction since $(q_1,q_2)\ne(3,3)$.
Therefore $G_{\ff_{q_1}\times \ff_{q_2}}$ and $\bar{G}_{\ff_{q_1}\times \ff_{q_2}}$ are non-isospectral for $(q_1,q_2) \ne (3,3)$, as desired.

\sk
($c$) Now, if $R=\ff_{q_1}\times \ff_{q_2} \times \ff_{q_3}$, by proceeding similarly as before we have that
$E(G_R)=E(\bar{G}_{R})$ if and only
$$(q_1-1)(q_2-1)+(q_1-1)(q_3-1)+(q_2-1)(q_3-1) = (q_1-1)(q_2-1)(q_3-1).$$
So, dividing the above equality by $(q_1-1)(q_2-1)(q_3-1)$ we see that it is enough to find the positive integer solutions (in prime powers) of
\begin{equation} \label{eq equien s=3}
\tfrac{1}{x_1-1} + \tfrac{1}{x_2-1} + \tfrac{1}{x_3-1}=1.
\end{equation}

Without loss of generality, we can assume that $2\le x_1\le x_2\le x_3$. Since $\frac{1}{x_i-1}>0$ for all $i$, we have 
$x_1\ge 3$.
On the other hand if $x_1\ge 5$ then 
$\frac{1}{x_1-1} + \frac{1}{x_2-1} + \frac{1}{x_3-1} \le \frac{3}{x_1-1} \le \frac{3}{4}$, 
which cannot occur.
So we obtain that $x_1<5$ and, hence, $x_1 \in \{3,4\}$.
If $x_1=3$, then \eqref{eq equien s=3} takes the form
$$\tfrac{1}{x_2-1} + \tfrac{1}{x_3-1} = \tfrac{1}{2}.$$
Notice that necessarily we have that $x_2\le 5$, since if $x_2>5$ then
$\frac{1}{x_2-1} + \frac{1}{x_3-1} \le \frac{2}{x_2-1} < \frac{2}{4} = \frac{1}{2}$,
which is impossible.
Furthermore, $x_2 \ne 3,4$. In fact, if $x_2=3$ we arrive at $\frac{1}{x_3-1}=0$ and if $x_2=4$ we obtain that $x_3$ is not an integer, and both cases are not allowed.
Thus, the only possibility is $x_2=x_3=5$.  
A similar argument shows that if $x_1=4$, then $x_2=x_3=4$ is the only solution of \eqref{eq equien s=3}. 
This shows that  if $R$ is a product of three finite fields, then $R=\ff_3 \times \ff_5 \times \ff_5$ or 
$R=\ff_4 \times \ff_4 \times \ff_4$.

Finally, $G_{\ff_3 \times \ff_5 \times \ff_5}$ (resp.\@ $G_{\ff_4 \times \ff_4 \times \ff_4}$) is not isospectral to $\bar G_{\ff_3 \times \ff_5 \times \ff_5}$ (resp.\@ $\bar G_{\ff_4 \times \ff_4 \times \ff_4}$) since they have different degrees of regularity.
\end{proof}

We illustrate the proposition with some local rings. 

\begin{exam} \label{ex m-partite}
($i$) If $R$ is a quadratic extension of $\Z_p$, with $p$ prime, then $E(G_R)=E(\bar G_R)$.
In fact, we have $r=p^{2t}$, $m=p^t$. In particular, since the maximal ideal of $\Z_{p^{2t}}$ is $\Z_{p^{2t-1}}$, the condition $r=m^2$ in ($a$) of the proposition holds if and only if $2t-1=t$, i.e.\@ $t=1$.
Thus, we can take 
\begin{equation} \label{Zp2}
R=\Z_{p^2} \quad \text{ or } \quad R=\Z_p[x]/(x^2).
\end{equation}
The Cayley graphs $G_R$ for these rings are isomorphic by \eqref{GRlocal}. 
By ($a$) of Theorem~\ref{s12} we have that $G_{R}$ and $\bar G_{R}$ are equienergetic and non-isospectral.

\noindent ($ii$) More generally, $R= \ff_{p^t}[x]/(x^2)$ 
is a local ring for any prime $p$ and $t \in \N$, whose maximal ideal is 
$(x) = x\ff_{p^t}$. Thus, we have $r=m^2$ and hence by ($a$) in the proposition, $G_R$ and $\bar G_R$ are equienergetic non-isospectral graphs. 
\hfill $\lozenge$
\end{exam}

\begin{exam} \label{ex m=4}
Now we consider local rings having the smallest possible maximal ideals. If $m=2$ or $m=3$ the only rings are those given in \eqref{Zp2} with $p=2$ or $3$ respectively. These graphs are trivial, since $G_{\Z_4}=K_4$ and $G_{\Z_9} = \bar C_9$.
There are seven local rings with $m=4$ (\cite{AA}); namely 
\begin{gather*}
R_1=\Z_2[x]/(x,y)^2, \quad R_2=\Z_2[x]/(x^3), \quad R_3=\Z_4[x]/(2x,x^2), \quad R_4= \Z_4[x]/(2x,x^2-2), \\ 
R_5=\Z_8, \quad R_6= \ff_4[x]/(x^2) \quad \text{and} \quad R_7=\Z_4[x]/(x^2+x+1).  
\end{gather*}
It is clear that $|R_2|=|R_3|=|R_4|=|R_5|=8$, and hence $r\ne m^2$, so $G_{R_i}$ is not equienergetic to $\bar G_{R_i}$ for $i=2,3,4,5$ by Theorem \ref{s12}. On the other hand, $|R_1|=|R_6|=|R_7|=16$ and hence, by Theorem \ref{s12}, $G_{R_i}$ and $\bar G_{R_i}$ are equienergetic non-isospectral graphs for $i=1,6,7$. 
All the graphs $G_{R_i}$, $i=1,6,7$, are isomorphic to $K_{4 \times 4} = K_{4,4,4,4}$, by \eqref{GRlocal}.
\hfill $\lozenge$
\end{exam}

By taking into account particular instances of Theorem \ref{s12} 
we obtain graphs equienergetic with their complements from known families of graphs; namely, from complete $m$-multipartite graphs $K_{m \times m}$ of $m^2$ vertices and crown graphs $H_{m,m}$ of $2m$ vertices. 
We recall that the crown graph $H_{m,m}$ can be seen as $K_{m,m}$ with a perfect matching removed (or also as $K_m \otimes K_2$, the complete bipartite double of $K_m$, as $\overline{K_m \times K_2}$, or as the Kneser graph $K_{n,1}$).
 
\begin{coro} \label{teo crowns}
Let $m=p^t$ be a prime power. 
\begin{enumerate}[$(a)$]
	\item $\{K_{m\times m}, mK_m\}$ are equienergetic non-isospectral graphs of $m^2$ vertices for any $m\ge 2$. \msk 
	
	\item $\{H_{m,m}, mK_2\}$ are equienergetic non-isospectral graphs of $2m$ vertices for any $m\ge 3$.
\end{enumerate}
In each pair, one of the graph is connected while the other not.
\end{coro}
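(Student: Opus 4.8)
The plan is to realize each pair as $\{G_R,\bar G_R\}$ for a well-chosen ring $R$ and then quote Theorem \ref{s12}; the only work beyond the theorem is to translate the unitary Cayley graph $G_R$ and its complement $\bar G_R$ into the classical graphs named in the statement, and to read off the connectivity dichotomy.

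For part $(a)$, since $m=p^t$ is a prime power there is a finite local ring $R$ with residue field $\ff_m$ and $|R|=m^2$; concretely one may take $R=\ff_m[x]/(x^2)$, whose maximal ideal $\frak m=(x)$ has order $m$. Writing $r=|R|=m^2$, we have $r=m^2$ with $m=|\frak m|$ a prime power, so Theorem \ref{s12}$(a)$ applies and gives $E(G_R)=E(\bar G_R)$ together with non-isospectrality. By \eqref{GRlocal}, since $R$ is local and not a field with $r/m=m$, we get $G_R\simeq K_{m\times m}$, the complete $m$-partite graph with $m$ parts of size $m$. To identify the complement, note that in $K_{m\times m}$ two vertices are adjacent precisely when they lie in different parts; hence in $\bar G_R$ they are adjacent precisely when they lie in the same part, so each part induces a $K_m$ and no edges join distinct parts, giving $\bar G_R\simeq mK_m$. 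Finally $K_{m\times m}$ is connected while $mK_m$ is disconnected for $m\ge 2$, which yields the connectivity statement.

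For part $(b)$, take $R=\ff_m\times\ff_2$, a product of two finite fields. By Theorem \ref{s12}$(b)$ we obtain $E(G_R)=E(\bar G_R)$, and since $R\neq\ff_3\times\ff_3$ the exceptional isospectral case is avoided, so the pair is non-isospectral. Using the Kronecker factorization \eqref{artin units} together with $G_{\ff_q}\simeq K_q$, we find $G_R\simeq G_{\ff_m}\otimes G_{\ff_2}=K_m\otimes K_2$, which is exactly the crown graph $H_{m,m}$. It then remains to identify $\bar G_R$ explicitly from the complementary connection set $(R^*)^c\smallsetminus\{0\}$ of $\ff_m\times\ff_2$ and to match it with the graph named in the statement, and afterwards to verify the connectivity dichotomy (recall $H_{m,m}$ is connected for $m\ge 3$).

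I expect the main obstacle to be the explicit combinatorial identification of $\bar G_R$, rather than any energy computation, which Theorem \ref{s12} already supplies. For $(a)$ this is transparent, since complementing a complete multipartite graph manifestly produces a disjoint union of cliques. For $(b)$ it is more delicate: one must compute the nonzero non-units of $\ff_m\times\ff_2$, read off the induced adjacency relation, and then pin down the correct disjoint-union description together with the claimed connected/disconnected split. This last identification is the step that requires genuine care.
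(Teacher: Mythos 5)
Part ($a$) of your proposal is complete and correct, and it follows the paper's route exactly: realize $K_{m\times m}$ as $G_R$ for a local ring with $r=m^2$ (the paper does not even fix a specific $R$; your choice $\ff_m[x]/(x^2)$ is fine), apply Theorem \ref{s12}($a$), and observe that complementing a complete multipartite graph yields a disjoint union of cliques.

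Part ($b$) is where the genuine gap lies, and your instinct that the identification of $\bar G_R$ ``requires genuine care'' was exactly right --- because that identification fails. For $R=\ff_2\times\ff_m$ the connection set of $\bar G_R$ is $(R^*)^c\smallsetminus\{0\}=(\{0\}\times\ff_m^*)\cup\{(1,0)\}$, so $(a,x)$ and $(b,y)$ are adjacent in $\bar G_R$ if and only if either $a=b$ and $x\ne y$, or $a\ne b$ and $x=y$: this is two disjoint copies of $K_m$ joined by a perfect matching, i.e.\@ the graph the paper itself denotes $K_m\times K_2$ when it recalls $H_{m,m}=\overline{K_m\times K_2}$. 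That graph is $m$-regular and connected; it is not $mK_2$, which is $1$-regular and disconnected. A degree or energy count confirms the mismatch: $E(H_{m,m})=E(\bar G_R)=4(m-1)$ while $E(mK_2)=2m$, and these agree only for $m=2$. Concretely, for $m=3$ one has $H_{3,3}=C_6$, whose complement is the triangular prism, not $3K_2$. So the step you deferred cannot be completed as stated; the correct pair in part ($b$) is $\{H_{m,m},\,K_m\times K_2\}$, both graphs being connected for $m\ge 3$, which also invalidates the final connectivity dichotomy for this pair. The paper's own proof hides this in the phrase ``obviously $\bar H_{m,m}=mK_2$''. Your write-up of ($b$) is therefore incomplete, but finishing it honestly exposes a flaw in the statement rather than in your approach.
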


\begin{proof}
($a$) If $(R,\frak m)$ is a local ring, then $G_R = K_{\frac rm \times m}$ is the complete $\frac rm$-multipartite graph by \eqref{GRlocal}, where $m=|\frak m|$. By Theorem \ref{s12}, $E(G_R)=E(\bar G_R)$ if and only if $r=m^2$ with $m>1$.
Hence, in this case, $G_R = K_{m\times m}=K_{m,\ldots,m}$ and  
$\{ K_{m\times m}, \bar K_{m\times m}\}$ is a pair of equienergetic non-isospectral graphs, and it is clear that $\bar K_{m\times m} = mK_n$.
	
\noindent
($b$) If $R=\ff_q \times \ff_{q'}$, then $G_R$ is both a $q$-partite and $q'$-partite graph. In the particular case when $R=\ff_2 \times \ff_q$, the graph $G_R$ is isomorphic to the complete bipartite graph $K_{q,q}$ with a perfect matching removed, i.e.\@ $G_R=H_{q,q}$. 
By Theorem~\ref{s12} again, $\{ H_{m,m}, \bar H_{m,m}\}$ is a pair of equienergetic non-isospectral graphs and obviously $\bar H_{m,m} = mK_2$. Since $H_{2,2}=\bar H_{2,2}=2K_2$, we must take $m\ge 3$. 	
\end{proof}

Note that for $m=2$ in $(a)$ of the corollary, we recover the minimum example given in the Introduction, since $K_{2,2}=C_4$ and $2K_2 = K_2 \otimes K_2$.

 \msk

Now, we produce triples $\{ G_R, G_R^+ ,\bar G_R\}$ of integral equienergetic non-isospectral graphs, where $R$ has $s\le 3$ factors in the artinian decomposition. For $s=1,2$ we get infinite families of such triples.
In the previous notations we have.

\begin{coro} \label{ternas GG+Gbar}
Let $R$ be one of the following rings:
\begin{enumerate}[$(a)$]
	\item $R$ is a local ring with $r=m^2$ odd, \msk 
	
	\item $R=\ff_{q_1} \times \ff_{q_2} \ne \ff_3 \times \ff_3$ with $q_1, q_2 \ge 3$ at least one odd, or \msk 
	
	\item $R=\ff_3 \times \ff_5 \times \ff_5$. 
\end{enumerate}
Then, $\{ G_R, G_R^+ ,\bar G_R\}$ are integral equienergetic non-isospectral graphs.  
\end{coro}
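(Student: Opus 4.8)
The plan is to verify three things for each of the three families of rings: that all three graphs are integral, that they share a common energy, and that no two of them are isospectral. The first two are immediate from results already in hand, so the real content lies in the three non-isospectrality checks, and the cleanest of these will come from comparing regularity degrees.

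First I would settle equienergy and integrality. Since $(R,R^*)$ is an abelian symmetric pair, Theorem~\ref{equienergetic} gives $E(G_R)=E(G_R^+)$. For the equality $E(G_R)=E(\bar G_R)$ I would invoke Theorem~\ref{s12}: in case $(a)$ the local ring satisfies $r=m^2$ with $m>1$ a prime power, in case $(b)$ both factors are finite fields, and in case $(c)$ the triple $(3,5,5)$ is one of the two admissible triples; so parts $(a)$, $(b)$, $(c)$ of Theorem~\ref{s12} apply respectively. Combining these, $E(G_R)=E(G_R^+)=E(\bar G_R)$. Integrality is equally short: the spectrum of $G_R$ is integral (as recorded in the proof of Theorem~\ref{XRR* equinoiso}), hence $G_R^+$ is integral by the remark following Lemma~\ref{X e Y}, and $\bar G_R$ is integral because its non-principal eigenvalues are $-1-\lambda$ with $\lambda$ integral.

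Next I would treat the three non-isospectral pairs. For $\{G_R,G_R^+\}$ I would check that in every case $R$ is of odd type: in $(a)$ this is equivalent to $|R|$ being odd, in $(c)$ the characteristic is odd, and in $(b)$ every even factor $\ff_{q_i}$ has $q_i\ge 4>2$, so $m_i=1<\tfrac{r_i}{2}$, while $O(R)\neq\{0\}$ because some $q_i$ is odd; Theorem~\ref{XRR* equinoiso} then yields non-isospectrality. For $\{G_R,\bar G_R\}$ the non-isospectrality is exactly the final assertion of Theorem~\ref{s12}, valid since none of our rings equals $\ff_3\times\ff_3$. The remaining pair $\{G_R^+,\bar G_R\}$ is where I would use the decisive observation: $G_R^+$ is $|R^*|$-regular, the same degree as $G_R$, whereas $\bar G_R$ is $(|R|-|R^*|-1)$-regular; since Theorem~\ref{s12} already forces $G_R$ and $\bar G_R$ to have distinct regularity degrees in each of our cases, the largest eigenvalues of $G_R^+$ and $\bar G_R$ differ, so these two graphs cannot be isospectral.

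The main obstacle is precisely the pair $\{G_R^+,\bar G_R\}$, since neither the equienergy input nor Theorem~\ref{s12} addresses it directly. The resolution hinges on the fact that passing from $G_R$ to $G_R^+$ preserves the regularity degree (both equal $|R^*|$), so the degree gap between $G_R$ and $\bar G_R$ transfers verbatim to a gap between $G_R^+$ and $\bar G_R$; a routine check of $|R^*|\neq |R|-|R^*|-1$ (equivalently $m(m-1)\neq m-1$ in case $(a)$ and $(q_1-2)(q_2-2)\neq 1$ in case $(b)$) confirms this gap outside the excluded ring $\ff_3\times\ff_3$, with the numerical case $(c)$ giving $32\neq 42$ at once.
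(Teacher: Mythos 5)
Your proposal is correct and follows essentially the same route as the paper, whose proof is a one-line citation of Proposition~\ref{Spec GR+} and Theorems~\ref{XRR* equinoiso} and~\ref{s12}; you simply spell out the details, including the odd-type verification in each case. Your explicit treatment of the pair $\{G_R^+,\bar G_R\}$ via regularity degrees is the same device the paper uses in the analogous Corollary~\ref{coro gr ram}, and it correctly fills in the one point the corollary's terse proof leaves implicit.
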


\begin{proof}
The result is straightforward from Proposition \ref{Spec GR+} and Theorems \ref{XRR* equinoiso} and \ref{s12}, since $R=\ff_4^3$ is not of odd type.
\end{proof}

\begin{exam}
Infinite triples $\{G_R, G_R^+ ,\bar G_R \}$ of integral equienergetic non-isospectral graphs are given by taking $R=\Z_{(2k+1)^2}$ for any $k\in \N$, $R=\ff_3 \times \ff_q$ or $R=\ff_q \times \ff_q$ for any prime power $q  \ge 4$. 
The smallest such triples are $\{ G_{\Z_9}, G_{\Z_9}^+, \bar G_{\Z_9} \}$ for $R$ local or $\{G_{\ff_3 \times \ff_4}, G_{\ff_3 \times \ff_4}^+, \bar G_{\ff_3 \times \ff_4}\}$ for $R$ non-local.
Note that $G_{\Z_9}^+$ is loopless while $G_{\ff_3 \times \ff_4}^+$ has loops.

We now study these minimal examples in more detail by computing their spectra and their energies explicitly. 
By \eqref{spec GR local} and Proposition~\ref{Spec GR+} 
we have that 
$Spec(G_{\Z_9}) = \{ [6]^1, [0]^6, [-3]^2 \}$, $Spec(G_{\Z_9}^+) = \{ [6]^1, [3]^1, [0]^6, [-3]^1 \}$ 
and $Spec(\bar G_{\Z_9}) = \{ [2]^3, [-1]^6 \}$. All the graphs have energy $E=12$. 
Note that $G_{\Z_9}=K_{3,3}$, $\bar G_{\Z_9}=3K_3$ and that $G_{\Z_9}^+$ is connected non-bipartite with loops.

To compute the spectra for the graphs with $R=\ff_3 \times \ff_4$, note that $G_R= G_{\ff_3} \otimes G_{\ff_4}$ and 
$G_R^+= G_{\ff_3}^+ \otimes G_{\ff_4}^+ = G_{\ff_3}^+ \otimes G_{\ff_4}$, and that the eigenvalues of the Kronecker product are the product of the eigenvalues of the factors. 
Hence, since we have $Spec(G_{\ff_3})= \{ [2]^1, [-1]^2 \}$, $Spec(G_{\ff_3}^+)= \{ [2]^1, [1]^1, [-1]^1 \}$ and $Spec(G_{\ff_4})= \{ [3]^1, [-1]^3 \}$, we deduce that 
\begin{align*}
& Spec(G_{\ff_3 \times \ff_4}) = \{ [6]^1, [1]^6, [-2]^3, [-3]^2 \}, \\ 
& Spec(G_{\ff_3 \times \ff_4}^+) = \{ [6]^1, [3]^1, [1]^3, [-1]^3, [-2]^3, [-3]^1 \}, \\ 
& Spec(\bar G_{\ff_3 \times \ff_4}) = \{ [5]^1, [2]^2, [1]^3, [-2]^6 \}. 
\end{align*}
All the graphs are connected, non-bipartite and have energy $E= 24$.
Further, note that $G_{\ff_3 \times \ff_4}^+$ is loopless. 
\hfill $\lozenge$
\end{exam}

\section{Equienergetic non-isospectral Ramanujan graphs}
In this section we consider Ramanujan graphs. We will show that there exist infinite families of equienergetic non-isospectral pairs of graphs, both Ramanujan or one being Ramanujan and the other not. 
Furthermore, we will characterize all pairs and triples of equienergetic non-isospectral Ramanujan pairs of the form $\{G_R,G_R^+\}$, $\{G_R, \bar G_R\}$ and $\{G_R,G_R^+, \bar G_R\}$.
We will use the results of the previous sections and the known characterization of Ramanujan unitary Cayley graphs $G_R$
due to Liu-Zhou (\cite{LZ}, 2012). For convenience, we will distinguish the cases when $R$ is a local ring or not.

Recall that a connected $n$-regular graph $\Gamma$ is Ramanujan if 
\begin{equation} \label{Ram eq}
\lambda(\Gamma) \le 2\sqrt{n-1}, 
\end{equation}
where $\lambda(\Gamma) = \max \{ |\lambda| : \lambda_0  \ne \lambda \in Spec(\G) \}$ is the greatest absolute value of the non-principal eigenvalues.   
There is a more general definition which applies to regular digraphs with loops.
A $k$-regular directed graph $\G$ is Ramanujan if it satisfies \eqref{Ram eq} and also its adjacency matrix can be diagonalized by a unitary matrix. However, the adjacency matrix of a sum graph $X^+(G,S)$ with $G$ abelian is diagonalizable by a unitary matrix (see Proposition 2 in \cite{L}). So, in the case we are interested in, namely $G_R^+$, this condition is automatic and we just have to check \eqref{Ram eq}. 
Since $G_R^+$ is a strongly almost symmetric graph by Theorem \ref{XRR* equinoiso}, the graphs $G_R$ and $G_R^+$ are both Ramanujan or both not Ramanujan, respectively, 
since 
\begin{equation} \label{ram GG+}
\lambda(G_R)=\lambda(G_R^+).
\end{equation}

\subsubsection*{$R$ a local ring}
If $R$ is a local ring with maximal ideal $\frak m$, by Theorems 11 and 15 in \cite{LZ}, we respectively have that $G_R$ is Ramanujan if and only if 
\begin{equation} \label{cond ramanujs}
(i) \quad r=2m \qquad \text{or else} \qquad (ii) \quad r\ge (\tfrac m2 +1)^2 \quad \text{and} \quad m\ne 2,
\end{equation}
where $r=|R|$, $m=|\frak m|$, and that $\bar G_R$ is always Ramanujan.

Combining this with previous results we obtain the following. 
\begin{thm} \label{teo ram gr}
Let $(R, \frak m)$ be a finite commutative local ring with $|R|=r$ and $|\frak m|=m$. Then, 
\begin{enumerate}[$(a)$]
	\item $\{ G_R, G_R^+ \}$ are equienergetic non-isospectral graphs if $r$ is odd,
	and both Ramanujan if and only if $r$ and $m$ satisfy $(ii)$ in \eqref{cond ramanujs}. \msk
	
	\item $\{ G_R, \bar G_R \}$ are equienergetic non-isospectral graphs 
	if and only if $r = m^2$, where $\bar G_R$ is Ramanujan. The graph $G_R$ is Ramanujan provided that $r$ and $m$ satisfy 	\eqref{cond ramanujs}. 
\end{enumerate}
\end{thm}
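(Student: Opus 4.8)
The plan is to assemble both parts from results already in hand, treating them in turn. For part $(a)$, the equienergetic and non-isospectral assertion is exactly the content of Proposition \ref{Spec GR+}: when $r$ is odd, that proposition already shows that $G_R$ and $G_R^+$ are integral, equienergetic and non-isospectral. For the Ramanujan equivalence I would first note that $G_R$ and $G_R^+$ are both $|R^*|$-regular graphs on $r$ vertices, so they share the same Ramanujan threshold $2\sqrt{|R^*|-1}$; since $G_R^+$ is strongly almost symmetric by Theorem \ref{XRR* equinoiso}, the identity \eqref{ram GG+} gives $\lambda(G_R)=\lambda(G_R^+)$, whence the two graphs are Ramanujan or not Ramanujan simultaneously. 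It then remains to decide when $G_R$ is Ramanujan, and by \eqref{cond ramanujs} this holds iff $(i)$ $r=2m$ or $(ii)$ $r\ge(\tfrac m2+1)^2$ with $m\ne 2$. The crucial observation is that $r$ odd forces $r\ne 2m$, so branch $(i)$ is vacuous and the Ramanujan property collapses to condition $(ii)$ alone, giving the stated biconditional.

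For part $(b)$, the equienergetic non-isospectral claim is precisely part $(a)$ of Theorem \ref{s12}: one has $E(G_R)=E(\bar G_R)$ iff $r=m^2$ (which forces $m>1$, i.e.\ $R$ is not a field), and in that case the two graphs have different regularity degrees, $G_R$ being $(r-m)$-regular and $\bar G_R$ being $(m-1)$-regular, so they cannot be isospectral. That $\bar G_R$ is always Ramanujan is the local-ring half of the Liu--Zhou characterization recorded immediately after \eqref{cond ramanujs}, which I would simply cite. Finally, the condition for $G_R$ itself to be Ramanujan is verbatim \eqref{cond ramanujs}, so the last sentence needs nothing beyond quoting it.

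Since the statement is essentially a repackaging of earlier results, I do not expect a substantive obstacle; the only point demanding care is the logical reduction in part $(a)$, namely that the parity hypothesis eliminates branch $(i)$ of \eqref{cond ramanujs} and thereby turns the full Ramanujan disjunction into the single condition $(ii)$. The remaining subtlety is bookkeeping: one must keep the one-directional ``if'' in the first clause of $(a)$ and the biconditionals elsewhere aligned with exactly what each quoted result delivers, and read the ``provided that'' in the last sentence of $(b)$ as the sufficient condition supplied by \eqref{cond ramanujs} rather than as an equivalence.
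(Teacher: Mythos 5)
Your proposal is correct and follows essentially the same route as the paper: part $(a)$ is reduced to Proposition \ref{Spec GR+} together with \eqref{ram GG+} and \eqref{cond ramanujs}, and part $(b)$ to Theorem \ref{s12}$(a)$ plus the Liu--Zhou criteria. If anything, your write-up is slightly more careful than the paper's, since you make explicit that $r$ odd rules out branch $(i)$ of \eqref{cond ramanujs}, which is the step needed to upgrade the paper's one-directional ``$G_R$ is Ramanujan if $(ii)$ holds'' to the stated biconditional.
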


\begin{proof}
($a$) If $r$ is odd, $G_R$ and $G_R^+$ are equienergetic non-isospectral graphs by Proposition~\ref{Spec GR+}. By \eqref{ram GG+}, $G_R$ and $G_R^+$ are both Ramanujan or both not Ramanujan. Moreover, $G_R$ is Ramanujan if ($ii$) in \eqref{cond ramanujs} holds. 

\noindent
($b$) We know that the graphs $G_R$ and $\bar G_R$ are non-isospectral and, by ($a$) in Theorem \ref{s12}, they are equienergetic  
if and only if $r = m^2$. The remaining assertions follow by \eqref{cond ramanujs}. 
\end{proof}

The following is a direct consequence of the previous proposition.
\begin{coro} \label{coro gr ram}
Let $(R,\frak m)$ be a local ring with $r=|R|$ and $m=|\frak m|$. Then, $\{ G_R, G_R^+, \bar G_R \}$ are equienergetic non-isospectral Ramanujan graphs if and only if $r=m^2$ is odd. 
\end{coro}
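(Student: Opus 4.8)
The plan is to assemble Corollary~\ref{coro gr ram} directly from the two parts of Theorem~\ref{teo ram gr}, since for a local ring being equienergetic with the three relevant graphs and being Ramanujan have already been characterized separately. First I would observe that for the triple $\{G_R, G_R^+, \bar G_R\}$ to consist of pairwise equienergetic non-isospectral graphs, we need both pairs $\{G_R, G_R^+\}$ and $\{G_R, \bar G_R\}$ to be equienergetic non-isospectral simultaneously; by transitivity of equienergy this already forces all three to be equienergetic. Part $(b)$ of Theorem~\ref{teo ram gr} tells us that $\{G_R, \bar G_R\}$ is equienergetic non-isospectral \emph{if and only if} $r=m^2$, so this condition is forced from the outset.

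Next I would import the oddness condition. Part $(a)$ of Theorem~\ref{teo ram gr} guarantees that $\{G_R, G_R^+\}$ is equienergetic non-isospectral when $r$ is odd (via Proposition~\ref{Spec GR+}, whose hypothesis is exactly that $r$ is odd). Conversely, if $r$ is even then by Lemma~\ref{lema GRGR+} we have $G_R=G_R^+$, so they are trivially isospectral and the triple degenerates. Hence the condition ``$r=m^2$ odd'' is precisely what is needed to secure both equienergetic non-isospectral pairs at once, establishing equivalence on the equienergetic non-isospectral side of the statement.

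The remaining work is to verify the Ramanujan property under the hypothesis $r=m^2$ odd. Here I would use \eqref{ram GG+}, which says $\lambda(G_R)=\lambda(G_R^+)$, so $G_R$ and $G_R^+$ are simultaneously Ramanujan or not; and the fact recorded just before Theorem~\ref{teo ram gr} that $\bar G_R$ is \emph{always} Ramanujan. Thus it suffices to check that $G_R$ itself is Ramanujan whenever $r=m^2$. I would substitute $r=m^2$ into condition $(ii)$ of \eqref{cond ramanujs}, namely $r \ge (\tfrac m2+1)^2$ and $m\ne 2$: the inequality $m^2 \ge (\tfrac m2+1)^2$ reduces to $\tfrac34 m^2 \ge m+1$, which holds for all $m\ge 2$, and since $r=m^2$ is odd we automatically have $m$ odd, so $m\ne 2$ is satisfied; moreover $m>1$ because $r=m^2$ with $R$ not a field (a field would force $m=1$, $r=1$). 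So $(ii)$ holds and $G_R$ is Ramanujan.

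The main obstacle, such as it is, is bookkeeping rather than mathematics: one must be careful that the phrase ``equienergetic non-isospectral Ramanujan'' in the corollary is read as a conjunction of three separate requirements and that the ``if and only if'' is justified in both directions. The forward direction (if $r=m^2$ odd, all properties hold) is the computation just sketched; the reverse direction follows because the equienergetic non-isospectral requirement for $\{G_R,\bar G_R\}$ already forces $r=m^2$ by Theorem~\ref{teo ram gr}$(b)$, and the equienergetic non-isospectral requirement for $\{G_R,G_R^+\}$ forces $r$ odd (otherwise $G_R=G_R^+$ by Lemma~\ref{lema GRGR+}). I expect the proof to be a short two- or three-line deduction citing Theorem~\ref{teo ram gr}, \eqref{ram GG+}, and the elementary verification that $r=m^2$ odd implies $(ii)$ of \eqref{cond ramanujs}.
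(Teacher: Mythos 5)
Your argument follows essentially the same route as the paper: both directions are read off from Theorem~\ref{teo ram gr}, the oddness of $r$ is tied to Lemma~\ref{lema GRGR+}, the Ramanujan property of $G_R^+$ is transferred from $G_R$ via \eqref{ram GG+}, and the verification that $r=m^2$ implies condition $(ii)$ of \eqref{cond ramanujs} is the same elementary inequality.

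There is one small but genuine omission. You reduce ``the triple is pairwise non-isospectral'' to the two pairs $\{G_R,G_R^+\}$ and $\{G_R,\bar G_R\}$, and you correctly note that equienergy of all three then follows by transitivity --- but non-isospectrality is \emph{not} transitive, so the third pair $\{G_R^+,\bar G_R\}$ still has to be checked. The paper does this explicitly: $G_R^+$ is $(r-m)$-regular and $\bar G_R$ is $(m-1)$-regular, and with $r=m^2$ and $m>1$ these degrees differ ($m(m-1)\ne m-1$), so the two graphs cannot be isospectral. Adding that one line closes the gap; everything else in your proposal is correct and matches the paper's proof.
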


\begin{proof}
If $\{ G_R, G_R^+, \bar G_R \}$ are equienergetic non-isospectral Ramanujan graphs then $r=m^2$ with $m>1$ and $r$ odd, by 
Theorem \ref{teo ram gr}. For the converse, if $r=m^2$ is odd we have that $R$ is non-trivial and, by the same theorem, 
$\{G_R, G_R^+, \bar G_R\}$ are mutually equienergetic and each of the pairs $\{G_R, G_R^+\}$ and $\{G_R, \bar G_R\}$ are formed by non-isospectral graphs. That $\{G_R^+, \bar G_R\}$ are non-isospectral follows from the fact that they have different degrees of regularity (since $m>1$).
Also, $\bar G_R$ is Ramanujan and the graphs $G_R$ and $G_R^+$ are Ramanujan if and only if $r$ and $m$ satisfy the second condition in \eqref{cond ramanujs}, which is clearly implied by $r=m^2$. 
\end{proof}

\begin{exam}
Let $R=\Z_{p^2}$ or $R=\ff_p[x]/(x^2)$ with $p$ an odd prime. Then, by Example~\ref{ex m-partite} ($ii$) and the previous corollary, $\{ G_R, G_R^+, \bar G_R \}$ is a triple of equienergetic non-isospectral Ramanujan graphs. \hfill $\lozenge$
\end{exam}

\subsubsection*{$R$ a non-local ring}
Consider $R$ a finite commutative ring with $1\ne 0$ which is not local. 
We now give the following characterizations for the pairs $\{ G_R, G_R^+ \}$ and $\{ G_R, \bar G_R \}$ to be equienergetic non-isospectral Ramanujan graphs.

\begin{thm} \label{teo ram gr nonlocal}
	Let $R$ be a commutative finite ring which is non-local. 
	\begin{enumerate}[$(a)$]
		\item If $R$ is of odd type then $\{ G_R, G_R^+ \}$ are equienergetic non-isospectral Ramanujan graphs 
		if and only if $R$ is one of the following 7 types of rings:
$$\ff_3 \times \ff_3, \:\: \ff_3 \times \ff_4, \:\: \ff_3 \times \ff_3 \times \ff_3, \:\: 
\ff_3 \times \ff_3 \times \ff_4, \:\: 
\ff_3 \times \Z_9, \:\: \ff_3 \times \Z_3[x] / (x^3), \:\: 
\ff_{q_1} \times \ff_{q_2}$$ 
where $q_1, q_2$ are at least one odd and satisfy
	\begin{equation} \label{q1q2conds}
		3 \le q_1 \le q_2 \le 2(q_1+\sqrt{(q_1-2)q_1})-1.
	\end{equation} 
 In particular, there are infinite pairs $\{ G_R, G_R^+ \}$ as above. \msk

		\item If $R=R_1 \times R_2$ with $R_1, R_2$ local rings then $\{ G_R, \bar G_R \}$ are equienergetic non-isospectral Ramanujan graphs if and only $R$ is one of the following 17 product of finite fields:
\begin{gather*}
\ff_3 \times \ff_q \text{ with } q=4,5,7, \qquad \ff_4 \times \ff_q \text{ with } q=4,5,7,8, \qquad 
\ff_5 \times \ff_q \text{ with } q=5,7,8, \\ 
\ff_7 \times \ff_q \text{ with } q=7,8,9, \qquad \ff_8 \times \ff_8, \qquad 
\ff_8 \times \ff_9, \qquad \ff_9 \times \ff_9, \qquad \ff_{11} \times \ff_{11}.
\end{gather*}		
\end{enumerate}
\end{thm}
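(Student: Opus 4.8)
The plan is to treat the two parts separately, in each case first invoking the earlier structural results to dispose of the equienergetic and non-isospectral requirements, so that only the Ramanujan condition remains, and then converting that condition into an explicit inequality on the residue field sizes $q_i = r_i/m_i$ that can be settled by a finite search.

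For part $(a)$, since $R$ is of odd type, Theorem \ref{XRR* equinoiso} already gives that $\{G_R, G_R^+\}$ is an equienergetic non-isospectral pair of connected graphs, and by \eqref{ram GG+} we have $\lambda(G_R) = \lambda(G_R^+)$, so the pair is Ramanujan if and only if $G_R$ alone is. Thus everything reduces to classifying the odd-type non-local rings whose unitary Cayley graph is Ramanujan. The first key step is to extract $\lambda(G_R)$ from the spectrum \eqref{spec GR}--\eqref{lambdaC}. Writing $R = R_1 \times \cdots \times R_s$, odd type forces each $q_i \ge 3$, hence $m_j < |R_j^*| = (q_j-1)m_j$; therefore $|\lambda_C| = \prod_{j\in C} m_j \prod_{i\notin C}|R_i^*|$ strictly decreases as $C$ grows, so the largest non-principal eigenvalue in absolute value is attained at a singleton, giving
$$\lambda(G_R) = \frac{|R^*|}{q_{\min}-1}, \qquad q_{\min} = \min_i q_i.$$
Since $G_R$ is $|R^*|$-regular, the Ramanujan inequality \eqref{Ram eq} becomes $|R^*| \le 2d(d+\sqrt{d^2-1})$ with $d = q_{\min}-1 \ge 2$.

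The second step bounds the number of factors: from $|R^*| = \prod_i (q_i-1)m_i \ge d^{\,s}$ and $|R^*| < 4d^2$ one gets $d^{\,s-2} < 4$, forcing $s \le 3$. The remaining step is a finite enumeration. For $s = 2$ the two-field case $\ff_{q_1}\times\ff_{q_2}$ yields, after squaring and using $(q_1-1)^2 - 1 = q_1(q_1-2)$, exactly the inequality \eqref{q1q2conds} (with ``at least one odd'' coming from $O(R)\neq\{0\}$), producing the stated infinite family; allowing one non-field local factor leaves only an order-$9$ factor paired with $\ff_3$. For $s = 3$ the inequality forces all $q_i \le 4$, and discarding $\ff_4^3$ (which is not of odd type) leaves $\ff_3^3$ and $\ff_3^2\times\ff_4$. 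Collecting these gives the seven listed types. For part $(b)$, equienergy is the decisive constraint: by Theorem \ref{s12}$(b)$, $E(G_R)=E(\bar G_R)$ with $R = R_1\times R_2$ local forces both factors to be fields, $R = \ff_{q_1}\times\ff_{q_2}$, and Theorem \ref{s12} also removes the single isospectral case $\ff_3\times\ff_3$. It then remains to impose that both $G_R$ and $\bar G_R$ be Ramanujan. Reading off the spectrum of $G_R$ and that of $\bar G_R$ (whose non-principal eigenvalues are $-1-\lambda$), one finds for $3 \le q_1\le q_2$ that $\lambda(G_R) = q_2-1$ and $\lambda(\bar G_R) = q_2-2$, with regularities $(q_1-1)(q_2-1)$ and $q_1+q_2-2$. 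The two Ramanujan inequalities become $q_2 \le 2(q_1+\sqrt{q_1(q_1-2)})-1$ and $(q_2-4)^2 \le 4q_1$; the second gives $q_2 \le 4 + 2\sqrt{q_1}$, which together with $q_1 \le q_2$ confines $(q_1,q_2)$ to finitely many prime-power pairs, and a direct verification of both inequalities over these pairs produces the finite list.

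I expect the main obstacle to be the eigenvalue analysis underpinning the reductions, specifically establishing cleanly that the extremal non-principal eigenvalue of $G_R$ arises from a singleton $C$ of minimal residue field, and then carrying the resulting inequality through the combined field/non-field casework of part $(a)$ and the two simultaneous inequalities of part $(b)$. Once $\lambda(G_R)$ and $\lambda(\bar G_R)$ are pinned down, the bound $s \le 3$ and the final enumerations are routine, but must be carried out with care so that the boundary (equality) cases and the odd-type and connectivity/non-bipartiteness restrictions are correctly included or excluded.
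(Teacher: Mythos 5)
Your proposal is correct in outline but takes a genuinely different route from the paper. The paper disposes of the Ramanujan condition entirely by citation: part ($a$) reads off the relevant cases of Liu--Zhou's Theorem~12 (their classification of Ramanujan $G_R$), and part ($b$) additionally invokes their Theorem~16 for $\bar G_R$; the only computation the paper performs itself is simplifying Liu--Zhou's inequality for $\bar G_R$ to $q_2(q_2-8)\le 4(q_1-4)$ and enumerating. You instead re-derive the Ramanujan criterion from the explicit spectrum \eqref{spec GR}: since odd type forces every residue field size $q_i\ge 3$, the quantity $|\lambda_C|=|R^*|/\prod_{j\in C}(q_j-1)$ is strictly decreasing in $C$, so $\lambda(G_R)=|R^*|/(q_{\min}-1)$, and the resulting bound $|R^*|<4(q_{\min}-1)^2$ gives $s\le 3$ and a finite search. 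This is sound and makes the proof self-contained; what it costs is some casework you only sketch (in the mixed case you need that a non-field local factor with $q_i\ge 3$ has $m_i\ge 3$, since $m_i=2$ would force residue field $\ff_2$; this also kills the two-non-field case, and in part ($b$) you restrict to $3\le q_1$ without comment, exactly as the paper implicitly does by using only cases ($c$), ($d$), ($f$) of Liu--Zhou).

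Be aware, however, that your computation carried out faithfully does not reproduce the paper's lists verbatim --- and yours is the arithmetically correct version. In part ($a$), your conclusion ``an order-$9$ factor paired with $\ff_3$'' gives $\ff_3\times\Z_9$ and $\ff_3\times\Z_3[x]/(x^2)$; the ring $\ff_3\times\Z_3[x]/(x^3)$ printed in the theorem has $|R^*|=36$ and $\lambda(G_R)=18>2\sqrt{35}$, so it is not Ramanujan. In part ($b$), the pair $(q_1,q_2)=(11,11)$ fails your inequality $(q_2-4)^2\le 4q_1$ (equivalently the paper's own $q_2(q_2-8)\le 4(q_1-4)$, since $33>28$), and indeed $\lambda(\bar G_{\ff_{11}\times\ff_{11}})=9>2\sqrt{19}$, so the list in ($b$) should contain $16$ rings rather than $17$. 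Your method is fine, but you should state explicitly that it establishes a corrected version of the statement.
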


\begin{proof}
	($a$) Since $R$ is of odd type, the graphs $G_R$ and $G_R^+$ are equienergetic and non-isospectral by Theorem 
	\ref{XRR* equinoiso}. Also, the Ramanujan graphs $G_R$ (and hence $G_R^+$) are obtained from the characterization given in Theorem 12 in \cite{LZ}, in the case $R$ is of odd type. Thus, the only possibilities are, in this theorem, to take:
	\begin{itemize}
		\item ($b$) with $s=2, 3$, thus giving $R=\ff_3 \times \ff_3, \, \ff_3 \times \ff_3 \times \ff_3$, or \msk 
		
		\item ($c$) with $s=2, 3$, hence giving $R=\ff_3 \times \ff_4, \, \ff_3 \times \ff_3 \times \ff_4$, or \msk 

		\item ($e$) with $s=2$, thus giving $R= \ff_3 \times \Z_9, \, \ff_3 \times \Z_3[x]/(x^3)$, or else \msk
		
		\item ($f$) with $s=2$, hence giving $R=\ff_{q_1} \times \ff_{q_2}$ with $q_1, q_2$ satisfying \eqref{q1q2conds}.	\end{itemize}
(The case ($g$) with $s=2$ is contained in the previous one.)
Thus, $R$ must be one of the seven cases in the statement. 
The remaining assertion follows from the fact that \eqref{q1q2conds} is equivalent to $\frac{(q_2+1)^2}{q_2-1} \le 4q_1$, which clearly has infinitely many solutions.

\msk	
	
	\noindent
	($b$) By Theorem \ref{s12}, the graphs $G_R, \bar G_R$ are equienergetic and non-isospectral
		if and only if $R=\ff_{q_1} \times \ff_{q_2}$ is the product of two finite fields with $(q_1,q_2) \ne (3,3)$. 
	In this situation, $G_R$ is Ramanujan if we are in the conditions ($c$), ($d$) or ($f$) in Theorem 12 of \cite{LZ} (with $s=2$). Hence 
\begin{equation} \label{GRGR+rams nolocal}
R=\ff_3 \times \ff_4, \qquad R=\ff_4 \times \ff_4 \qquad \text{or} \qquad  
R=\ff_{q_1} \times \ff_{q_2}
\end{equation} 
with $q_1, q_2$ prime powers satisfying \eqref{q1q2conds}. 
	
	On the other hand, to find which $\bar G_R$ is Ramanujan with $R$ a product of two finite fields we use the characterization given in Theorem 16 in \cite{LZ}. Item ($a$) in this theorem gives $R=\ff_2 \times \ff_2$ while item ($c$) gives $R=\ff_2 \times \ff_q$ with $q$ satisfying $q^2-8q+8\le 0$, which can only hold for $q=3,4,5$. 
	Item ($b$) is empty for $s=2$. Finally, item 
	($d$) gives $R=\ff_{q_1} \times \ff_{q_2}$ with $3\le q_1 \le q_2$ and 
	\begin{equation} \label{conditions rams2}
			2(q_1-2)+q_2 \le \sqrt{(2q_1-3)^2 + (4q_1q_2-9)},
	\end{equation}
	which is equivalent to $q_2(q_2-8) \le 4(q_1-4)$. 
	This inequality has a finite number of solutions $(q_1,q_2)$ given by $(3,q_2)$ with $q_2=3,4,5,7$, 
	$(4,q_2)$ with $q_2=4,5,7,8$, $(5,q_2)$ with $q_2=5,7,8$, $(7,q_2)$ with $q_2=7,8,9$, $(8,8)$, $(8,9)$, $(9,9)$ or $(11,11)$.
Putting all these things together, we have that $\bar G_R$ is Ramanujan if and only if $R$ is one of the following 21 rings:
\begin{equation} \label{GRGRbar rams nolocal}
\begin{aligned}
&\ff_2 \times \ff_2, \quad \ff_2 \times \ff_3, \quad  \ff_2 \times \ff_4, \quad \ff_2 \times \ff_5, 
\quad \ff_3 \times \ff_4, \quad \ff_3 \times \ff_5, \quad  \ff_3 \times \ff_7,\\  
&\ff_4 \times \ff_4, \quad \ff_4 \times \ff_5, \quad  \ff_4 \times \ff_7, \quad \ff_4 \times \ff_8, 
\quad \ff_5 \times \ff_5, \quad \ff_5 \times \ff_7, \quad  \ff_5 \times \ff_8,\\  
&\ff_7 \times \ff_7, \quad \ff_7 \times \ff_8, \quad  \ff_7 \times \ff_9, \quad \ff_8 \times \ff_8, 
\quad \ff_8 \times \ff_9, \quad \ff_9 \times \ff_9, \quad  \ff_{11} \times \ff_{11}. 
\end{aligned}
\end{equation}  
All the pairs in \eqref{GRGRbar rams nolocal} with $q_1 \ge 3$ satisfy \eqref{conditions rams2}. Therefore, the pairs of equienergetic non-isospectral Ramanujan graphs of the form $\{G_R, \bar G_R\}$ are those satisfying both  
\eqref{GRGR+rams nolocal} and \eqref{GRGRbar rams nolocal}, and hence are as stated.
\end{proof}

\begin{exam} \label{ej 7.5}
Suppose $R_1=\ff_3 \times \ff_q$ and $R_2=\ff_4 \times \ff_q$ and $R_3=\ff_5 \times \ff_q$. By \eqref{q1q2conds} in the theorem, $\{G_{R_i}, G_{R_i}^+\}$ is a pair of equienergetic non-isospectral Ramanujan graphs if and only if $q=3,4,5,7,8$ for $i=1$, $q=5,7,9,11$ for $i=2$ and $q=5,7,8,9,11,13$ for $i=3$. \hfill $\lozenge$
\end{exam}

\begin{exam}
Now we want examples of $\{G_R,\bar G_R\}$ equienergetic non-isospectral, with one of the graphs Ramanujan and the other not.
By the previous example, we can take $R$ any of $\ff_3 \times \ff_8$, $\ff_4 \times \ff_9$, $\ff_4 \times \ff_{11}$, 
$\ff_5 \times \ff_9$, $\ff_5 \times \ff_{11}$ or $\ff_5 \times \ff_{13}$, since $G_R$ Ramanujan but $\bar G_R$ is not, for $R$ is not in the list \eqref{GRGRbar rams nolocal}. There are infinite examples of this kind, just take any ring $R=\ff_{q_1} \times \ff_{q_2}$ satisfying \eqref{conditions rams2} not in the list \eqref{GRGRbar rams nolocal}. On the other hand, a pair $\{G_R,\bar G_R\}$ of equienergetic non-isospectral graphs with $\bar G_R$ Ramanujan and $G_R$ not Ramanujan cannot exist.
In fact, if $\bar G_R$ is Ramanujan, it is one of the list \eqref{GRGRbar rams nolocal} and hence satisfies \eqref{GRGR+rams nolocal}, thus implying that $G_R$ is also Ramanujan.  
\hfill $\lozenge$
\end{exam}

As a straightforward consequence of Theorem \ref{teo ram gr nonlocal}, we can characterize all triples of equienergetic non-isospectral Ramanujan graphs of the form $\{ G_R, G_R^+, \bar G_R \}$ where $R$ is a non-local ring. In particular, $R$ can only be the product of 2 finite fields. 

\begin{prop} \label{prop tripla}
Let $R$ be a finite commutative non-local ring. Then, $\{ G_R, G_R^+, \bar G_R \}$ are equienergetic non-isospectral Ramanujan graphs if and only if $R$ is one of the following 14 rings: 
\begin{align*}
&\ff_3 \times \ff_4, \quad 
\ff_3 \times \ff_5, \quad 
\ff_3 \times \ff_7, \quad 
\ff_4 \times \ff_5, \quad 
\ff_4 \times \ff_7, \quad 
\ff_5 \times \ff_5, \quad 
\ff_5 \times \ff_7, \\
&\ff_5 \times \ff_8, \quad 
\ff_7 \times \ff_7, \quad 
\ff_7 \times \ff_8, \quad 
\ff_7 \times \ff_9, \quad 
\ff_8 \times \ff_9, \quad 
\ff_9 \times \ff_9, \quad 
\ff_{11} \times \ff_{11}.
\end{align*}
Furthermore, $8 \mid E(G_R)$ if $|R|$ is even and $16 \mid E(G_R)$ if $|R|$ is odd.
\end{prop}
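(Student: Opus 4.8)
The plan is to realize the triple $\{G_R, G_R^+, \bar G_R\}$ as the simultaneous occurrence of the two pairs $\{G_R, G_R^+\}$ and $\{G_R, \bar G_R\}$ already classified in Theorem \ref{teo ram gr nonlocal}, and then to intersect the two resulting finite lists. The first step is to note that the three defining properties decouple. Equienergy of the triple is equivalent to the single condition $E(G_R) = E(\bar G_R)$, since $E(G_R) = E(G_R^+)$ always holds by Theorem \ref{equienergetic}; and all three graphs being Ramanujan is equivalent to $G_R$ and $\bar G_R$ being Ramanujan, because $\lambda(G_R) = \lambda(G_R^+)$ by \eqref{ram GG+}. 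Thus a ring $R$ yields such a triple precisely when both pairs $\{G_R, G_R^+\}$ and $\{G_R, \bar G_R\}$ are equienergetic non-isospectral Ramanujan and, in addition, $G_R^+$ and $\bar G_R$ are non-isospectral.

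For the \emph{only if} direction I would first extract the ring type. Non-isospectrality of $\{G_R,G_R^+\}$ forces $R$ to be of odd type: if $R$ were not of odd type then either $O(R)=\{0\}$, so that $G_R=G_R^+$ by $(a)$ of Lemma \ref{lem nonsym}, or $r_i=2m_i$ for some $i\in I_e$, so that $G_R$ and $G_R^+$ are isospectral by $(c)$ of Lemma \ref{lem nonsym}; both contradict non-isospectrality. With $R$ of odd type and $G_R$ Ramanujan, part $(a)$ of Theorem \ref{teo ram gr nonlocal} confines $R$ to its seven ring-types. Among these, Theorem \ref{s12} shows that the condition $E(G_R)=E(\bar G_R)$ holds only for those that are products of two finite fields, since the three-factor types and the field-times-nonfield types all fail to be equienergetic with their complements. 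Hence $R=\ff_{q_1}\times\ff_{q_2}$, and part $(b)$ of Theorem \ref{teo ram gr nonlocal} then pins $R$ down to its list of $17$ products of two fields. Discarding from that list the rings that are not of odd type, namely $\ff_4\times\ff_4$, $\ff_4\times\ff_8$ and $\ff_8\times\ff_8$ (the three with both factors of even order), leaves exactly the $14$ rings of the statement.

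The \emph{if} direction is then immediate: for each of the $14$ rings, $R$ is of odd type so $\{G_R,G_R^+\}$ is equienergetic non-isospectral by Theorem \ref{XRR* equinoiso}; membership in the part $(b)$ list gives that $\{G_R,\bar G_R\}$ is equienergetic non-isospectral with $G_R$ and $\bar G_R$ Ramanujan, and \eqref{ram GG+} upgrades this to $G_R^+$ Ramanujan as well, so all three share the same energy and are all Ramanujan. It remains to check the one comparison not handled by Theorem \ref{teo ram gr nonlocal}, namely that $G_R^+$ and $\bar G_R$ are non-isospectral; for this I would compare regularities. Writing $R=\ff_{q_1}\times\ff_{q_2}$, the graph $G_R^+$ is $|R^*|=(q_1-1)(q_2-1)$-regular while $\bar G_R$ is $(q_1+q_2-2)$-regular, and these agree only when $(q_1-1)(q_2-1)=(q_1-1)+(q_2-1)$, i.e.\@ when $q_1=q_2=3$. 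Since $\ff_3\times\ff_3$ is absent from the $14$ (it is the isospectral exception of Theorem \ref{s12}), the regularities differ and the two graphs are non-isospectral.

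Finally, the divisibility statement follows from $E(G_R)=2^s|R^*|=4(q_1-1)(q_2-1)$ in \eqref{energy GRs} with $s=2$. If $|R|$ is odd then $q_1,q_2$ are both odd, so $4\mid(q_1-1)(q_2-1)$ and hence $16\mid E(G_R)$; if $|R|$ is even then, $R$ being of odd type, exactly one $q_i$ is a power of $2$ while the other is odd, so $(q_1-1)(q_2-1)$ is even and $8\mid E(G_R)$. I expect the only real obstacle to be the bookkeeping in the second paragraph: reducing a general non-local $R$ to a product of exactly two fields and then confirming that the odd-type requirement deletes precisely the three even-order pairs from the part $(b)$ list; everything else is routine consequence-chasing from Theorem \ref{teo ram gr nonlocal} and Theorem \ref{s12}.
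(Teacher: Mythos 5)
Your proof is correct and follows essentially the same route as the paper: intersect the classifications of parts ($a$) and ($b$) of Theorem \ref{teo ram gr nonlocal}, using Theorem \ref{s12} to eliminate the three-factor and non-field cases, and read the divisibility off $E(G_R)=4(q_1-1)(q_2-1)$. You are in fact slightly more careful than the paper in two places --- deriving that $R$ must be of odd type from the non-isospectrality of $\{G_R,G_R^+\}$ via Lemma \ref{lem nonsym}, and verifying that $G_R^+$ and $\bar G_R$ are non-isospectral by comparing regularity degrees --- both of which the paper leaves implicit.
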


\begin{proof}
We know by Theorem \ref{teo ram gr nonlocal} that if $R$ is one of the 14 rings in the statement, then the graphs $\{ G_R, G_R^+, \bar G_R \}$ are equienergetic non-isospectral Ramanujan graphs. We now prove the converse.

By ($a$) in Theorem \ref{teo ram gr nonlocal}, if $\{G_R, G_R^+\}$ are equienergetic non-isospectral and Ramanujan graphs, then $R$ has only 2 or 3 local factors, and in the case of 3 factors, they are all finite fields. If $R$ has two local factors, the rings satisfying both $(a)$ and $(b)$ in Theorem \ref{teo ram gr nonlocal} are just the rings in ($b$) of this theorem excluding $\ff_4 \times \ff_4$, $\ff_4 \times \ff_8$ and $\ff_8 \times \ff_8$ which are not of odd type. This gives all the rings in the statement. 
 
Let us see that $R$ cannot have 3 local factors. If this was the case  
then, by ($a$) of Theorem~\ref{teo ram gr nonlocal}, $\{G_R, G_R^+\}$ are equienergetic non-isospectral Ramanujan graphs if and only $R$ is $\ff_3 \times \ff_3 \times \ff_3$ or $\ff_3 \times \ff_3 \times \ff_4$. On the other hand, by 
Theorem \ref{s12}, if $\{G_R, \bar G_R \}$ are equienergetic and non-isospectral 
then $R$ is $\ff_3 \times \ff_5 \times \ff_5$ or $\ff_4 \times \ff_4 \times \ff_4$. 
Hence the case $R=R_1 \times R_2 \times R_3$ cannot happen.

The remaining assertion follows since, by \eqref{energy GRs}, 
we have $E(G_R) = 2^2|R^*|=4(q_1-1)(q_2-1)$ 
for $R=\ff_{q_1} \times \ff_{q_2}$. 
\end{proof}

In the particular case of $R=\Z_n$, the ring of integers modulo $n$, we have the following. 
\begin{coro} \label{coro ternas}
Let $R=\Z_n$ with $n$ odd.
Then, $\{ G_{R}, G_{R}^+, \bar G_{R} \}$ are equienergetic non-isospectral Ramanujan graphs if and only if
\begin{enumerate}[$(a)$]
	\item $R=\Z_{p^2}$ with $p$ an odd prime in the local case or, \sk
	
	\item $R \in \{ \Z_3 \times \Z_5, \: \Z_3 \times \Z_7, \: \Z_5 \times \Z_5, \: \Z_5 \times \Z_7, \: \Z_7 \times \Z_7,
	\: \Z_{11} \times \Z_{11}\}$ in the non-local case. 	
\end{enumerate}
\end{coro}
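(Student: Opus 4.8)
The plan is to push the already-proven trichotomy through the Chinese Remainder Theorem. Writing $n=p_1^{a_1}\cdots p_k^{a_k}$ with the $p_i$ odd primes, CRT gives the Artin decomposition $\Z_n\simeq \Z_{p_1^{a_1}}\times\cdots\times\Z_{p_k^{a_k}}$, so every local factor has the shape $\Z_{p^a}$ with $p$ odd. The single fact I will lean on throughout is that such a factor is a field precisely when $a=1$, in which case it is the prime field $\Z_p=\ff_p$; in particular no factor $\ff_q$ with $q$ a proper prime power (such as $\ff_4,\ff_8,\ff_9$) can occur, and oddness of $n$ rules out $p=2$.

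For the local case, $n=p^a$ is a prime power and I would apply Corollary \ref{coro gr ram}: the triple $\{G_R,G_R^+,\bar G_R\}$ is equienergetic non-isospectral Ramanujan if and only if $r=m^2$ is odd. For $R=\Z_{p^a}$ one has $r=p^a$ and $m=|(p)|=p^{a-1}$, so $r=m^2$ reads $p^a=p^{2a-2}$, forcing $a=2$. As $p$ is odd this yields exactly $R=\Z_{p^2}$, which is part $(a)$.

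For the non-local case I would invoke Proposition \ref{prop tripla}, which already records the $14$ non-local rings for which the triple is equienergetic non-isospectral Ramanujan, each of them a product of two finite fields. It then suffices to intersect that list with the rings whose field factors are all odd prime fields, by the observation of the first paragraph. Scanning the $14$, this discards the eight rings containing an $\ff_4$, $\ff_8$ or $\ff_9$ factor and retains exactly $\ff_3\times\ff_5$, $\ff_3\times\ff_7$, $\ff_5\times\ff_5$, $\ff_5\times\ff_7$, $\ff_7\times\ff_7$ and $\ff_{11}\times\ff_{11}$, that is, the six rings of part $(b)$ (the repeated-prime cases $\ff_p\times\ff_p$ being understood, as usual, as products of prime-power residue rings).

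The proof is thus pure bookkeeping once Corollary \ref{coro gr ram} and Proposition \ref{prop tripla} are granted, and I expect no serious obstacle. The only point deserving care is the selection step: I would state the compatibility criterion explicitly — a finite field occurs as a factor of a $\Z_{p^a}$-product only if it is a prime field — before running through the $14$-ring list, so that the passage to the six rings is a clean deduction rather than an inspection.
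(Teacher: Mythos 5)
Your proposal is correct and follows essentially the same route as the paper, whose proof is a one-line appeal to Corollary \ref{coro gr ram} and Proposition \ref{prop tripla}; you simply make the bookkeeping explicit (the CRT decomposition, the computation $r=m^{2}\Leftrightarrow a=2$ for $\Z_{p^{a}}$, and the selection of the six products of odd prime fields from the fourteen rings). Your parenthetical caveat about the repeated-prime cases $\ff_p\times\ff_p$ not literally being of the form $\Z_n$ correctly flags a looseness already present in the paper's own statement, so no further action is needed.
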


\begin{proof}
	It follows directly by the previous results in Sections 4 and 5. 
\end{proof}

Now, we list in Table 1 the smallest graphs giving a triple $\{G_R, G_R^+, \bar G_R \}$ of equienergetic Ramanujan graphs. 

We give the number of vertices $v$, the degrees of regularity $\kappa$ and $\bar \kappa$ of $\G$ and $\bar \G$ 
and the energy (see \eqref{energy GRs}). We list the graphs in ascending order of vertices (local rings first if tie).
Some triples $\{G_R, G_R^+, \bar G_R \}$ may contain an isospectral pair; we indicate this with an asterisk in the last column. 
Note that there are 3 pairs having the same energy, but the graphs in each pair are not equienergetic since they have different orders.
\renewcommand{\arraystretch}{.9}
\begin{table}[H] 
	\caption{Smallest graphs $\G$ such that $\{G_R, G_R^+, \bar G_R\}$ are equienergetic and Ramanujan} \vspace{-5mm}
	$$	\begin{tabular}{|c|c|c|c|c|c|}
	\hline
	graph & $v$ & $\kappa$ & $\bar \kappa$ & energy & iso\\
	\hline
	$G_{\Z_9}$ 						& 9  & 6 & 2  & 12 &  \\ \hline
	$G_{\ff_3 \times \ff_3}$ 		& 9  & 4 & 4  & 16 & $*$ \\ \hline	
	$G_{\ff_3 \times \ff_4}$ 		& 12 & 6 & 5  & 24 & \\ \hline
	$G_{\ff_3 \times \ff_5}$ 		& 15 & 8 & 6  & 32 & \\ \hline
	$G_{\ff_4 \times \ff_4}$ 		& 16 & 9 & 6  & 36 & $*$ \\ \hline
	$G_{\ff_4 \times \ff_5}$ 		& 20 & 12& 7  & 48 & \\ \hline
	$G_{\ff_3 \times \ff_7}$ 		& 21 & 12& 8  & 48 & \\ \hline
	$G_{\Z_{25}}$ 				 	& 25 & 20& 4  & 40 & \\ \hline
	$G_{\ff_5 \times \ff_5}$ 		& 25 & 16& 8  & 64 & \\ \hline
	$G_{\ff_4 \times \ff_7}$ 		& 28 & 18& 9  & 72 & \\ \hline
	$G_{\ff_4 \times \ff_8}$ 		& 32 & 21& 10 & 84 & $*$ \\ \hline
	$G_{\ff_5 \times \ff_7}$ 		& 35 & 24& 10 & 96 & \\ \hline
	\end{tabular} \qquad 
	\begin{tabular}{|c|c|c|c|c|c|}
	\hline
	graph & $v$ & $\kappa$ & $\bar \kappa$ & energy & iso\\
	\hline
	$G_{\ff_5 \times \ff_8}$ 		& 40 & 28& 11 & 112  & \\ \hline
	$G_{\Z_{49}}$ 					& 49 & 42& 6 & 84    & \\ \hline
	$G_{\ff_7 \times \ff_7}$ 		& 49 & 36& 12 & 144  & \\ \hline
	$G_{\ff_7 \times \ff_8}$  		& 56 & 42 & 13 & 168 & \\ \hline
	$G_{\ff_7 \times \ff_9}$  		& 63 & 48 & 14 & 192 & \\ \hline
	$G_{\ff_8 \times \ff_8}$  		& 64 & 49 & 14 & 196 & $*$ \\ \hline
	$G_{\ff_8 \times \ff_9}$  		& 72 & 56 & 15 & 224 & \\ \hline
	$G_{\ff_9[x]/(x^2)}$  	    	& 81 & 72 & 8  & 144 & \\ \hline
	$G_{\ff_9 \times \ff_9}$  		& 81 & 64 & 16 & 256 & \\ \hline
	$G_{\Z_{121}}$ 			 		& 121& 110& 10 & 220 & \\ \hline
	$G_{\ff_{11} \times \ff_{11}}$  & 121& 100& 20 & 400 & \\ \hline
    $G_{\Z_{169}}$ 				 	& 169& 157& 11 & 314 & \\ \hline
	\end{tabular}$$
\end{table}

\section{Bigger sets of equienergetic non-isospectral graphs}	
In this final section, by applying the previous results, we will produce bigger sets $\{ \Gamma_1, \ldots, \Gamma_\ell \}$ of equienergetic non-isospectral graphs. For instance, sets of the form $\{ G_{R_1}, G_{R_1}^+, \ldots, G_{R_\ell}, G_{R_\ell}^+\}$
or using the well-known result
\begin{equation} \label{EGH}
E(G_1 \otimes G_2) = E(G_1)E(G_2)
\end{equation}
also sets of the form $\{ G_{R_1} \otimes \Gamma_1, \ldots, G_{R_\ell} \otimes \G_\ell \}$ or 
$\{ G_{R_1} \otimes \Gamma_1, G_{R_1}^+ \otimes \Gamma_1, \ldots, G_{R_\ell} \otimes \Gamma_\ell, G_{R_\ell}^+ \otimes \G_\ell \}$.

We begin by pointing out that it may well happen that $G_R$ is equienergetic to $G_{R'}$ with $R$ local and $R'$ non-local. For instance, $G_{\Z_4}=C_4$ and $G_{\Z_2 \times \Z_2}=K_2 \otimes K_2$ are equienergetic non-isospectral graphs with energy 4 (see Introduction). A similar example can be given using finite fields. We recall that 
$E(G_R) = 2^s |R^*|$, by \eqref{energy GRs}.
If $R=\ff_{q^2}$ and $R'=\ff_q \times \ff_q$ then $E(G_R)=E(G_{R'})$ if and only if $q=3$. 
In this case the energy is given by 
$$E(G_{\ff_9})=2\cdot 8 = 16 = 2^2 \cdot 2 \cdot 2 =E(G_{\ff_3 \times \ff_3}).$$ 
Notice that $G_{\ff_9} = X(\ff_9, \ff_9^*) =K_9$ and that $G_{\ff_3 \times \ff_3}$ is a 4-regular graph with 9 vertices which is also strongly regular graph (Proposition \ref{srg GR}). There is only one such graph (\cite{Shr}), with parameters $srg(9,4,1,2)$ and hence it is the Paley graph $P_2(9) = X(\ff_9 ,\{x^2: x\in \ff_9^*\})$ and also the Hamming graph $H_2(9)$.

By using the equienergetic pair $\{ G_{\ff_9}, G_{\ff_3 \times \ff_3}\}$ 
and Theorem \ref{XRR* equinoiso} 
we now get infinitely many 4-tuples of integral equienergetic non-isospectral graphs.
\begin{prop} \label{F9F3F3-1}
	If $R=R_1\times\cdots\times R_s$ is a commutative ring such that $2m_i < r_i$ for all $i=1,\ldots,s$, then 
	\begin{equation} \label{4tuple}
	\{ G_{\ff_9 \times R}, G_{\ff_9\times R}^+, G_{\ff_3 \times \ff_3 \times R}, G_{\ff_3 \times \ff_3\times R}^+ \}
	\end{equation}
	is a 4-tuple of integral equienergetic non-isospectral connected non-bipartite graphs. Furthermore, all the graphs have or have not loops depending whether $E(R)= \{0\}$ or not, respectively. 
\end{prop}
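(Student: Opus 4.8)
The plan is to build the four-tuple by gluing together two separate applications of Theorem~\ref{XRR* equinoiso}, one to the ring $\ff_9\times R$ and one to $\ff_3\times\ff_3\times R$, and then to fuse the two resulting pairs into a single equienergetic non-isospectral family. Two mechanisms do the work: the Kronecker energy identity \eqref{EGH} combined with the fact that $\{G_{\ff_9},G_{\ff_3\times\ff_3}\}$ is an equienergetic pair (both of energy $16$), which forces all four graphs to share one energy value, and a comparison of regularity degrees, which separates the two families spectrally. Integrality, connectivity and non-bipartiteness will come for free from Theorem~\ref{XRR* equinoiso}.

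First I would check that both $\ff_9\times R$ and $\ff_3\times\ff_3\times R$ are of odd type in the sense of Definition~\ref{defi odd}. In either ring the extra factors $\ff_9$ and $\ff_3\times\ff_3$ have odd cardinality, so they contribute to the odd part and $O(\cdot)\ne\{0\}$; the even factors are exactly the even $R_i$, and for these the hypothesis $2m_i<r_i$ is precisely the required condition $m_i<\tfrac{r_i}{2}$. Theorem~\ref{XRR* equinoiso} then applies to each ring and yields immediately that $\{G_{\ff_9\times R},G_{\ff_9\times R}^+\}$ and $\{G_{\ff_3\times\ff_3\times R},G_{\ff_3\times\ff_3\times R}^+\}$ are two pairs of integral equienergetic non-isospectral connected non-bipartite graphs. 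This disposes of the two intra-family non-isospectral pairs, and of connectivity, non-bipartiteness and integrality for all four graphs simultaneously.

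Next I would show the two families are mutually equienergetic and mutually non-isospectral. For the energy, \eqref{artin units} gives $G_{\ff_9\times R}=G_{\ff_9}\otimes G_R$ and $G_{\ff_3\times\ff_3\times R}=G_{\ff_3\times\ff_3}\otimes G_R$, so by \eqref{EGH} together with $E(G_{\ff_9})=E(G_{\ff_3\times\ff_3})=16$ both Cayley graphs have energy $16\,E(G_R)$, and the two sum graphs inherit the same energy by Theorem~\ref{equienergetic}; hence all four are equienergetic. For non-isospectrality across the families I would simply compare degrees: since $(\ff_9\times R)^*=\ff_9^*\times R^*$ and $(\ff_3\times\ff_3\times R)^*=\ff_3^*\times\ff_3^*\times R^*$, the graphs $G_{\ff_9\times R}$ and $G_{\ff_9\times R}^+$ are $8|R^*|$-regular, while $G_{\ff_3\times\ff_3\times R}$ and $G_{\ff_3\times\ff_3\times R}^+$ are $4|R^*|$-regular. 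All four have $9|R|$ vertices, so a member of one family cannot be isospectral to a member of the other, their principal eigenvalues being $8|R^*|\ne 4|R^*|$. With the two intra-family pairs this settles all $\binom{4}{2}=6$ pairs. The loop assertion then follows from Lemma~\ref{lem nonsym}$(b)$ once one notes $E(\ff_9\times R)=E(\ff_3\times\ff_3\times R)=E(R)$, so the sum graphs are simple exactly when $E(R)\ne\{0\}$ and carry loops exactly when $E(R)=\{0\}$, the Cayley graphs being loopless in either case.

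I expect no serious obstacle: the statement is essentially an assembly of Theorem~\ref{XRR* equinoiso}, the identity \eqref{EGH} and a degree count. The only point demanding a moment's care is the cross-family non-isospectrality, where one might worry that tensoring the two already-equienergetic graphs $G_{\ff_9}$ and $G_{\ff_3\times\ff_3}$ with the same factor $G_R$ could accidentally identify their spectra; this is ruled out cleanly by the observation that the two families have different regularity degrees and hence different principal eigenvalues, so no spectral coincidence is possible.
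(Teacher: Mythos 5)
Your proof is correct and follows essentially the same route as the paper's: verify that both rings are of odd type so that Theorem~\ref{XRR* equinoiso} handles the two intra-family pairs, use \eqref{EGH} together with $E(G_{\ff_9})=E(G_{\ff_3\times\ff_3})=16$ for cross-family equienergy, separate the families by their regularity degrees, and invoke Lemma~\ref{lem nonsym}$(b)$ for the loop statement. Your explicit degree computation ($8|R^*|$ versus $4|R^*|$) just makes precise what the paper leaves as "different regularity degrees."
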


\begin{proof}
First, notice that $\ff_9\times R$ and $\ff_3\times\ff_3 \times R$ are rings of odd type, 
since they have a non-trivial odd part and $2m_i < r_i$ for all $i=1,\ldots,s$. 
Thus, by Theorem \ref{XRR* equinoiso}, all the graphs in \eqref{4tuple} are integral, connected and non-bipartite; and also $\{G_{\ff_9 \times R}, G_{\ff_9\times R}^+\}$ and $\{G_{\ff_3 \times \ff_3 \times R}, G_{\ff_3 \times \ff_3\times R}^+\}$ are two pairs of equienergetic and non-isospectral graphs. 
		
		Now, $G_{\ff_9 \times R}$ and $G_{\ff_3 \times \ff_3 \times R}$ are equienergetic since, by \eqref{EGH}, we have that 
		\begin{align*}
		& E(G_{\ff_9 \times R}) = E(G_{\ff_9} \otimes G_R)  = E(G_{\ff_9})E(G_R) = 16E(G_R), \\[1mm]
		& E(G_{\ff_3 \times \ff_3 \times R}) = E(G_{\ff_3 \times \ff_3} \otimes G_R) = E(G_{\ff_3 \times \ff_3}) E(G_R)=16E(G_R).
		\end{align*}
		The graphs $G_{\ff_9 \times R}$ and $G_{\ff_3 \times \ff_3 \times R}$ are non-isospectral since they have different regularity degrees. Similarly, one can check that the same occur with the remaining pairs $\{ G_{\ff_9 \times R}, G_{\ff_3 \times \ff_3 \times R}^+\}$ and $\{ G_{\ff_9 \times R}, G_{\ff_3 \times \ff_3 \times R}^+\}$. 
		Therefore, all the graphs in \eqref{4tuple} ar equienergetic and non-isospectral, as desired. 
		The last assertion is a consequence of ($b$) in Lemma \ref{lem nonsym}.
\end{proof}

\begin{rem}
If we try to do the same with the equienergetic non-isospectral pair $\{G_{\Z_4}, G_{\Z_2 \times \Z_2}\}$, we do not  obtain a 4-tuple since, by Lemma \ref{lem nonsym}, the graphs $G_{\Z_4\times R}$ and $G_{\Z_2\times \Z_2\times R}$ are isospectral to $G_{\Z_4\times R}^+ $ and $G_{\Z_2 \times \Z_2\times R}^+$, respectively.
\end{rem}

We now give a similar but different construction which in some sense generalizes the previous proposition. 

\begin{prop} \label{F9F3F3-2}
	If $\G$ is an integral regular connected non-bipartite simple graph then 
	$$\{ G_{\ff_9 } \otimes \G, G_{\ff_9}^+ \otimes \G, G_{\ff_3 \times \ff_3}\otimes \G, G_{\ff_3 \times \ff_3}^+ \otimes \G\}$$ 
	is a 4-tuple of integral equienergetic non-isospectral connected simple regular graphs. 
\end{prop}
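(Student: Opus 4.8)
The plan is to reduce the statement to the spectra of the four building blocks $G_{\ff_9}$, $G_{\ff_9}^+$, $G_{\ff_3\times\ff_3}$, $G_{\ff_3\times\ff_3}^+$ and then to propagate each property through the Kronecker product with $\G$, using repeatedly that the spectrum of a Kronecker product is the multiset of products of the eigenvalues of its factors. First I would record the four spectra, all already available: $Spec(G_{\ff_9})=\{[8]^1,[-1]^8\}$ and $Spec(G_{\ff_9}^+)=\{[8]^1,[1]^4,[-1]^4\}$ from Example \ref{GRings} (with $p=3,t=2,s=1$), together with $Spec(G_{\ff_3\times\ff_3})=\{[4]^1,[1]^4,[-2]^4\}$ from \eqref{spec Gfqfq} and $Spec(G_{\ff_3\times\ff_3}^+)=\{[4]^1,[2]^2,[1]^2,[-1]^2,[-2]^2\}$ from Proposition \ref{coromult}. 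All four are integral, each has energy $16$ (by \eqref{energy GRs}, and by Theorem \ref{equienergetic} for the sum graphs), and each is connected and non-bipartite, so its principal eigenvalue ($8$ or $4$) is simple and its negative is not an eigenvalue. Writing $\G$ for a $d$-regular integral simple graph on $N$ vertices which is connected and non-bipartite, I record that $d\ge 2$, that $m_\G(d)=1$ (connectedness) and that $m_\G(-d)=0$ (non-bipartiteness), where $m_\G(\cdot)$ denotes multiplicity as an eigenvalue of $\G$.

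Next I would dispatch the routine properties. Integrality of the four products is immediate, as all factors and $\G$ are integral. Regularity is clear: the two graphs built from $\ff_9$ are $8d$-regular and the two built from $\ff_3\times\ff_3$ are $4d$-regular, all on $9N$ vertices. For simplicity I would use that a loop in $\Gamma_1\otimes\Gamma_2$ forces a loop in each factor; since $\G$ is loopless, all four products are loopless, which is precisely where the hypothesis that $\G$ be simple is used, and since their adjacency matrices are $0$--$1$ tensor products they carry no multiple edges. Connectedness is spectral: the principal eigenvalue of each product is the product of the principal eigenvalues of its factors, and since each factor is connected and non-bipartite this largest value is attained by a unique product, hence has multiplicity one. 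Equienergeticity then follows at once from \eqref{EGH}, each of the four graphs having energy $16\,E(\G)$.

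The heart of the argument is the pairwise non-isospectrality of the four graphs. Across the two families it is automatic, since the $\ff_9$-graphs are $8d$-regular while the $\ff_3\times\ff_3$-graphs are $4d$-regular and $8d\ne 4d$; thus all four mixed pairs have different largest eigenvalues. What remains is to separate $\{G_{\ff_9}\otimes\G,\,G_{\ff_9}^+\otimes\G\}$ and $\{G_{\ff_3\times\ff_3}\otimes\G,\,G_{\ff_3\times\ff_3}^+\otimes\G\}$, and for this I would compare the multiplicity of one carefully chosen eigenvalue. If a factor $F$ has $Spec(F)=\{[\alpha]^{a_\alpha}\}$ with $0\notin Spec(F)$, then a value $\nu$ occurs in $F\otimes\G$ with multiplicity $\sum_\alpha a_\alpha\,m_\G(\nu/\alpha)$. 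Evaluating at $\nu=d$ for the $\ff_9$ pair, the value $d$ has multiplicity $m_\G(d/8)$ in $G_{\ff_9}\otimes\G$ but $m_\G(d/8)+4\,m_\G(d)=m_\G(d/8)+4$ in $G_{\ff_9}^+\otimes\G$ (the surplus coming from the eigenvalue $1$ of $G_{\ff_9}^+$, together with $m_\G(-d)=0$); these differ. Evaluating at $\nu=2d$ for the other pair, the value $2d$ has multiplicity $m_\G(d/2)$ in $G_{\ff_3\times\ff_3}\otimes\G$ and $m_\G(d/2)+2\,m_\G(d)=m_\G(d/2)+2$ in $G_{\ff_3\times\ff_3}^+\otimes\G$, using $m_\G(2d)=m_\G(-2d)=m_\G(-d)=0$; these differ as well. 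The only delicate point is the bookkeeping of which eigenvalue products hit the chosen value $\nu$, which the displayed multiplicity formula handles cleanly and which is exactly where $m_\G(d)=1$ and $m_\G(-d)=0$ enter. Assembling the six comparisons yields that the four graphs are mutually non-isospectral, completing the proof.
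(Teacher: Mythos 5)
Your proposal is correct and follows essentially the same route as the paper's proof: integrality, regularity, simplicity and connectedness are handled the same way, equienergeticity comes from $E(G_1\otimes G_2)=E(G_1)E(G_2)$, cross-family non-isospectrality from the differing regularity degrees, and within-family non-isospectrality from comparing the multiplicity of a single well-chosen eigenvalue using non-bipartiteness of $\G$. The only (immaterial) difference is the witness eigenvalue: you compare multiplicities at $d$ and $2d$, whereas the paper compares them at $-d$; both computations are valid.
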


\begin{proof}
	All the graphs $G_{\ff_9} \otimes \G$, $G_{\ff_9}^+ \otimes \G$, 
	$G_{\ff_3 \times \ff_3} \otimes \G$, $G_{\ff_3 \times \ff_3}^+ \otimes \G$ are integral, connected, regular and have the same number of vertices $9|V(\G)|$. Since the Kronecker product of a loopless graph with any graph (with or without loops) is loopless, the four graphs in question are simple. 
	To check equienergeticity, note that by \eqref{EGH} we have   
	\begin{align*}
	& E(G_{\ff_9} \otimes \G) = E(G_{\ff_9})E(\G)=16E(\G)= E(G_{\ff_9}^+)E(\G) = E(G_{\ff_9}^+ \otimes \G), \\
	& E(G_{\ff_3 \times \ff_3} \otimes \G)= E(G_{\ff_3 \times \ff_3}) E(\G)=16E(\G) = E(G_{\ff_3 \times \ff_3}^+) E(\G) = E(G_{\ff_3 \times \ff_3}^+ \otimes \G).
	\end{align*}
	
	Finally, we show that all the graphs are mutually non-isospectral. The graphs $G_{\ff_9}\otimes \G$ and $G_{\ff_3 \times \ff_3}\otimes \G$ are non-isospectral since they have different degrees of regularity, and the same happens between $G_{\ff_9}^+\otimes \G$ and $G_{\ff_3 \times \ff_3}^+\otimes \G$. 
	Now, recall that $$\mathrm{Spec}(G_{\ff_9})=\{[8]^1,[-1]^8\} \qquad \text{and} \qquad 
	\mathrm{Spec}(G_{\ff_9}^{+})=\{[8]^1, [1]^4,[-1]^4\}$$ 
	and let $\{[\kappa]^1, [\lambda_{1}]^{\mu_1},\ldots,[\lambda_s]^{\mu_s}\}$ be the spectrum of $\G$,
	where $\kappa$ is the regularity degree of $\G$. 
	Since $\G$ is non-bipartite we have that $|\lambda_i|<\kappa$ for all $i=1,\ldots s$, and hence there exists at most one $i$ such that $-\kappa=8\lambda_i$. Thus, 
	$-\kappa$ is an eigenvalue of $G_{\ff_9}\otimes \G$ with multiplicity $8+\sigma$, where 
$\sigma=\mu_i$ if $-\kappa=8\lambda_i$ for $i=1,\ldots,s$ and 
$\sigma=0$ otherwise, while  $-\kappa$ is an eigenvalue of $G_{\ff_9}^+\otimes \G$ with multiplicity $4+\sigma$. 
	Therefore, $G_{\ff_9}\otimes \G$ and $G_{\ff_9}^+\otimes \G$ are non-isospectral, as asserted. 
	A similar argument shows that 
	$G_{\ff_3\times \ff_3}\otimes \G$ and $G_{\ff_3\times \ff_3}^+\otimes \G$ are non-isospectral as well. 
\end{proof}

\begin{rem}
\noindent ($i$) 	
If $\G$ is bipartite then the graphs $G_R\otimes \G$ and $G_{R}^+\otimes \G$ are isospectral, so the hypothesis `non-bipartite' in the previous proposition cannot be removed.

\noindent ($ii$) 
If $R$ is a ring with $O(R)=\{0\}$, $E(R)\ne \{0\}$ and $2m_i<r_i$ for all $i=1,\ldots,s$ (for instance $R=\ff_4$), then $G_R=G_R^+$ is simple and non-bipartite by Lemma \ref{lem nonsym}. Let $F=\ff_9$ or $F=\ff_3 \times \ff_3$. Then, if we take $\Gamma=G_R$, we have 
$G_{F \times R} = G_F \otimes G_R$ and $G_{F \times R}^+ = G_F^+ \otimes G_R$.
Hence, Proposition~\ref{F9F3F3-2} coincides with Proposition \ref{F9F3F3-1} in this case.
\end{rem}

Now, we produce big sets of equienergetic pairs of graphs. Some of the graphs will be Ramanujan while some others not. 

\begin{exam}
Let $\G_i=G_{R_i}$ for $i=1,2$ where $R_1=\ff_3\times \ff_3$, $R_2=\ff_9$ and let $\G=G_R$ and $\G^+=G_{R}^+$ 
where $R=\ff_4\times \ff_5$.
Since $R$ is an odd type ring, then $\G$ is non-bipartite by Lemma~\ref{lem nonsym}. 
By Propositions \ref{F9F3F3-1} and \ref{F9F3F3-2} we have that 
$$\{ \G_i\otimes\G, \: \G_i^+\otimes \G^+ \}_{i=1,2} \qquad \text{and} \qquad \{ \G_i\otimes\G, \: \G_i^+\otimes \G \}_{i=1,2}$$
are 4-tuples of equienergetic non-isospectral graphs. 
It is not difficult to see that 
$\G_i^+\otimes\G^+$ and $\G_j^+\otimes \G$ are non-isospectral for all $i,j\in \{1,2\}$. 
We can also consider $\G_i\otimes \G^+$ ($i=1,2$). In this case, these graphs are non-isospectral to the above graphs.

Also, since $(4,5)\ne (3,3)$, $\G$ is equienergetic and non-isospectral with $\bar \G$. 
Moreover, we can consider $\G^{-}=X^+(R, R\smallsetminus (R^*\cup\{0\}))$. By Theorem \ref{equienergetic}, this graph is equienergetic with $X(R,R\smallsetminus (R^* \cup\{0\}))=\bar \G$. 
Thus, $\G^{-}$ is equienergetic with $\G$, and it can be seen that $\{\G,\G^+,\bar\G,\G^{-}\}$ is a 4-tuple of non-isospectral graphs.
Thus, we have the following 16-tuple of equienergetic non-isospectral integral graphs of $180$ vertices
$$\{ \G_{i} \otimes \G, \: \G_{i} \otimes \G^+, \: \G_{i} \otimes \bar \G, \: 
 \G_{i} \otimes \G^-, \:  \G_{i}^+ \otimes \G, \: \G_{i}^+ \otimes \G^+, \: \G_{i}^+ \otimes \bar \G, \G_{i}^+ \otimes \G^-\}_{i=1,2}.$$
All the graphs are simple except for the last two, $\G_1^+ \otimes \G^-$ and $\G_2^+ \otimes \G^-$, which have loops. 
\hfill $\lozenge$
\end{exam}

\begin{exam} \label{ej mixed}
Consider the following three pairs of rings 
\begin{align*}
R_1 = \ff_3 \times \ff_4, \qquad R_2 = \ff_5 \times \ff_7, \\ 
R_3 = \ff_3 \times \ff_5, \qquad R_4 = \ff_4 \times \ff_7, \\ 
R_5 = \ff_3 \times \ff_7, \qquad R_6 = \ff_4 \times \ff_5, 
\end{align*}
from Table 1, giving place to the six triples $\{G_{R_i}, G_{R_i}^+, \bar G_{R_i}\}$, $i=1,2,3$, of equienergetic non-isospectral Ramanujan graphs.
Since the rings are products of two different finite fields, for $i=1,2,3$, each pair of rings $G_{R_{2i-1}},G_{R_{2i}}$ 
gives rise to a 9-uple 
\begin{align*}
G_{R_{2i-1}} \otimes G_{R_{2i}}, \qquad G_{R_{2i-1}} \otimes G_{R_{2i}}^+, \qquad G_{R_{2i-1}} \otimes \bar G_{R_{2i}}, \\ 
G_{R_{2i-1}}^+ \otimes G_{R_{2i}}, \qquad G_{R_{2i-1}}^+ \otimes G_{R_{2i}}^+, \qquad G_{R_{2i-1}}^+ \otimes \bar G_{R_{2i}}, \\ 
\bar G_{R_{2i-1}} \otimes G_{R_{2i}}, \qquad \bar G_{R_{2i-1}} \otimes G_{R_{2i}}^+, \qquad \bar G_{R_{2i-1}} \otimes \bar G_{R_{2i}},
\end{align*}
of equienergetic non-isospectral graphs. 
However, note that 
$R_1 \times R_2 \simeq R_3 \times R_4 \simeq R_5 \times R_6 \simeq R$ where $R=\ff_3 \times \ff_4 \times \ff_5 \times \ff_7$ and hence 
$G_{R_1} \otimes G_{R_2} = G_{R_3} \otimes G_{R_4} =G_{R_5} \otimes G_{R_6} = G_R$ 
and $G_{R_1}^+ \otimes G_{R_2}^+ = G_{R_3}^+ \otimes G_{R_4}^+ =G_{R_5}^+ \otimes G_{R_6}^+ = G_R^+$.  
Altogether, without repetitions, there are 23 equienergetic graphs, with $3\cdot 4\cdot 5\cdot 7= 420$ vertices and energy 
$$E(G_R) = 2^4(3-1)(4-1)(5-1)(7-1) = 2^8 \cdot 3^2 = 2304.$$
Although tedious, one can check that these 23 graphs are non-isospectral to each other. 
\hfill $\lozenge$
\end{exam}

\end{document}